\let\pa\partial
\let\na\nabla
\let\eps\varepsilon
\newcommand{\N}{{\mathbb N}}
\newcommand{\R}{{\mathbb R}}
\newcommand{\diver}{\operatorname{div}}
\newcommand{\dom}{{\mathcal D}}
\newcommand{\dd}{{\mathrm d}}
\newcommand{\F}{{\mathcal F}}
\newcommand{\B}{{\mathcal B}}
\newcommand{\E}{\mathbb{E}}
\newtheorem{theorem}{Theorem}
\newtheorem{lemma}[theorem]{Lemma}
\begin{document}

\title[A coupled model for angiogenesis]{A coupled stochastic
differential reaction-diffusion system for angiogenesis} 

\author[M. Fellner]{Markus Fellner}
\address{Institute of Analysis and Scientific Computing, Vienna University of  
	Technology, Wiedner Hauptstra\ss e 8--10, 1040 Wien, Austria}
\email{markus.fellner@tuwien.ac.at} 

\author[A. J\"ungel]{Ansgar J\"ungel}
\address{Institute of Analysis and Scientific Computing, Vienna University of  
	Technology, Wiedner Hauptstra\ss e 8--10, 1040 Wien, Austria}
\email{juengel@tuwien.ac.at} 

\date{\today}

\thanks{The authors acknowledge partial support from   
the Austrian Science Fund (FWF), grants P33010, W1245, and F65.
This work has received funding from the European 
Research Council (ERC) under the European Union's Horizon 2020 research and 
innovation programme, ERC Advanced Grant no.~101018153.} 

\begin{abstract}
A coupled system of nonlinear mixed-type equations
modeling early stages of angiogenesis is analyzed in a bounded domain. 
The system consists of
stochastic differential equations describing the movement of the positions
of the tip and stalk endothelial cells, 
due to chemotaxis, durotaxis, and random motion;
ordinary differential equations for the volume fractions of the extracellular fluid,
basement membrane, and fibrin matrix; and reaction-diffusion equations for
the concentrations of several proteins involved in the angiogenesis process.
The drift terms of the stochastic differential equations involve the gradients 
of the volume fractions and the concentrations, and the diffusivities in the
reaction-diffusion equations depend nonlocally on the volume fractions, making
the system highly nonlinear. The existence of a unique solution to this system
is proved by using fixed-point arguments and H\"older regularity theory.
Numerical experiments in two space dimensions illustrate the onset of
formation of vessels.
\end{abstract}

% \paragraph{Keywords:}  
\keywords{Angiogenesis, stochastic differential equations, reaction-diffusion
equations, tip cell movement, existence analysis.}  
 
% \paragraph{AMS classification:}  
\subjclass[2000]{34F05, 35K57, 35R60, 92C17, 92C37.}

\maketitle

%%%%%%%%%%%%%%%%%%%%%%%%%%%%%%%%%%%%%%%%%%%%%%%%%%%%%%%%%%%%%%%%%%%%%%%%%%%%%%%

\section{Introduction}

Angiogenesis is the process of expanding existing blood vessel networks by
sprouting and branching. Its mathematical modeling is important to understand,
for instance, wound healing, inflammation, and tumor growth. 
In this paper, we analyze a variant 
of the continuum-scale model suggested in \cite{BMGV16} that is used to simulate
the early stages of angiogenesis. The model takes into account the dynamics
of the tip (leading) endothelial cells by solving stochastic differential
equations, the influence of various proteins triggering the cell dynamics by
solving reaction-diffusion equations, and the change of some components 
of the extracellular matrix into extracellular fluid 
by solving ordinary differential equations.
Up to our knowledge, this is the first analysis of the model of \cite{BMGV16}. 

Angiogenesis is mainly triggered by local tissue hypoxia 
(low oxygen level in the tissue), which activates
the production of the signal protein 
vascular endothelial growth factor (VEGF). Endothelial
cells, which form a barrier between vessels and tissues, reached by the VEGF
signal initiate the angiogenic program. These cells break out of the vessel wall,
degrade the basement membrane (a thin sheet-like structure providing cell and
tissue support), proliferate, and invade the surrounding tissue while still 
connected with the vessel network. The angiogenic program specifies the
activated endothelial cells into tip cells (cells at the front of the
vascular sprouts) and stalk cells (highly proliferating cells). 
The tip cells lead the sprout towards the source of VEGF, while the stalk cells
proliferate to follow the tip cells supporting sprout elongation; see
Figure \ref{fig.overview}. 

\begin{figure}[ht]
\includegraphics[width=120mm]{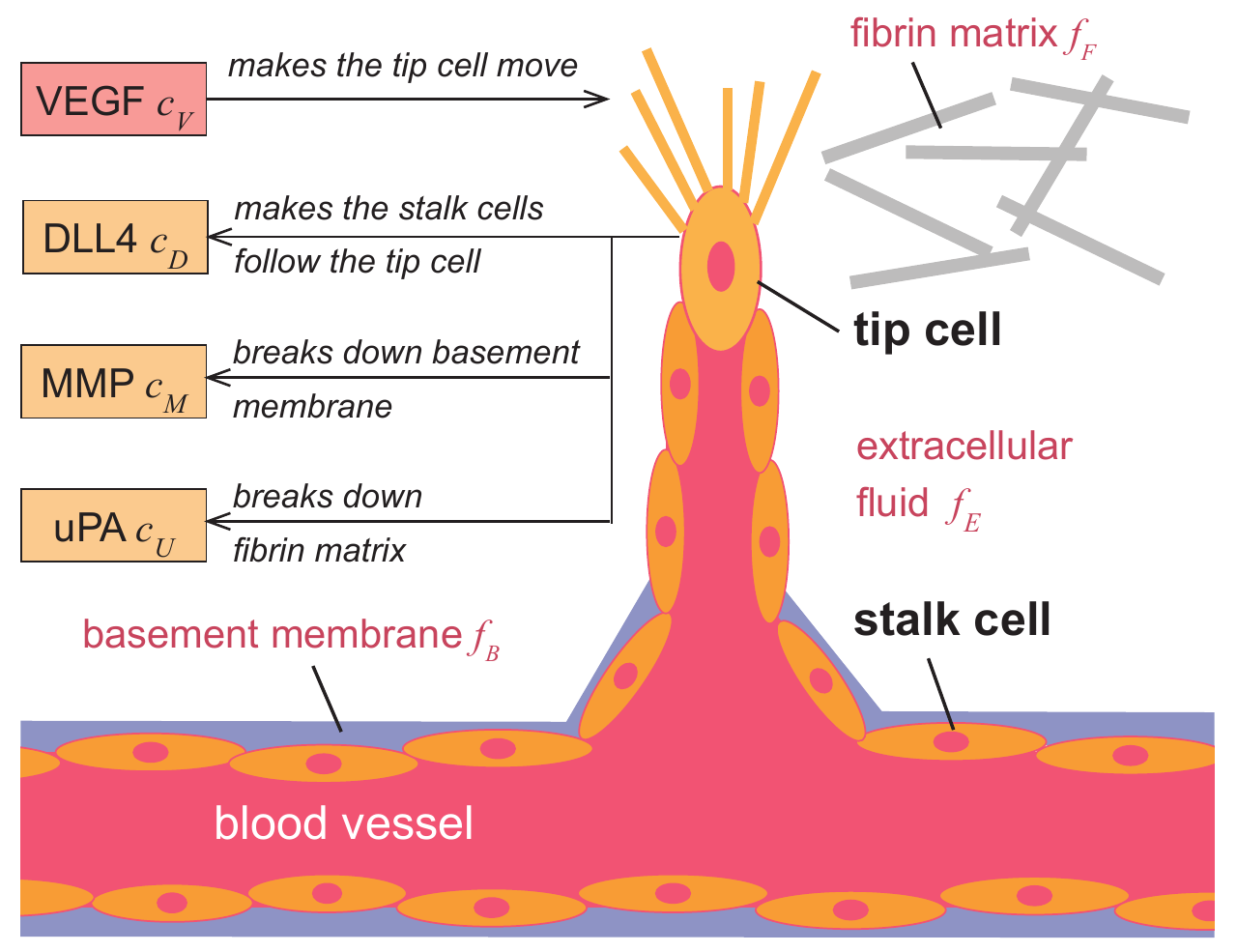}
\caption{Schematic model of sprout formation in a blood vessel. Tip cells are
activated by VEGF and they segrete the proteins DLL4, MMP, and uPA. The vessels
are embedded in the fibrin matrix and extracellular fluid and surrounded by the
basement membrane.}
\label{fig.overview}
\end{figure}

Following \cite{BMGV16}, the tip cells segrete the proteins
delta-like ligand 4 (DLL4), matrix metalloproteinase (MMP), and urokinase
plasminogen activator (uPA). The chemokine DLL4 makes the stalk cells follow the
tip cells, MMP breaks down the basement membrane, and uPA degrades the
fibrin matrix such that cells can move into. There are many other molecular
mechanisms and mediators in the angiogenesis process; see, e.g., \cite{BlGe13,UGEG10}
for details.

\subsection{Model equations}

The unknowns are 
\begin{itemize}
\item the positions $X_1^k(t)$ of the $k$th tip cell and $X_2^k(t)$ of the
$k$th stalk cell;
\item the volume fractions of the basement membrane 
$f_B(x,t)$, the extracellular fluid $f_E(x,t)$, and the fibrin matrix $f_F(x,t)$;
\item the concentrations of the proteins VEGF $c_V(x,t)$, DLL4 $c_D(x,t)$,
MMP $c_M(x,t)$, and uPA $c_U(x,t)$, 
\end{itemize}
where $t\ge 0$ is the time and $x\in\dom\subset\R^3$ the spatial
variable. All unknowns depend additionally on the stochastic variable 
$\omega\in\Omega$, where $\Omega$ is the set of events. 
We assume that the mixture of basement membrane, extracellular fluid, 
and fibrine matrix is saturated, i.e., the volume fractions $f_B$, $f_E$, and $f_F$
sum up to one.
We introduce the vectors $X=(X_i^k)_{i=1,2,k=1,\ldots,N_i}$,
$f=(f_B,f_E,f_F)$, and $c=(c_V,c_D,c_M,c_U)$.

\subsubsection*{Stochastic differential equations} 
The tip cells move according to chemotaxis force, driven by the gradient
of the VEGF concentration, the durotaxis force, driven by the gradient of
the solid fraction $f_S:=f_B+f_F$, and random motion modeling uncertainties. 
The dynamics of $X_i^k(t)$ is assumed to be
governed by the stochastic differential equations (SDEs)
\begin{equation}\label{1.X}
  \dd X_i^k(t) = g_i[c,f](X_i^k,t)\dd t + \sigma_i(X_i^k)\dd W_i^k(t), \quad t>0,
	\quad X_i^k(0) = X_i^0,
\end{equation}
where $i=1,2$, $k=1,\ldots,N_i$ for $N_i\in\N$, $W_i^k(t)$ are Wiener processes,
and the drift terms
\begin{equation}\label{1.g}
\begin{aligned}
  g_1[c,f](X_1,t) &:= \alpha_0 M_1(f_S(X_1),X_1)z_1
	+ \gamma(f_S(X_1))\na c_V(X_1,t)	+ \lambda(f_S(X_1))\na f_S(X_1,t), \\
  g_2[c,f](X_2,t) &:= \alpha_0 M_2(f_S(X_2),X_2)z_2 
	+ \gamma(f_S(X_2))\na c_D(X_2,t)	+ \lambda(f_S(X_2))\na f_S(X_2,t)
\end{aligned}
\end{equation}
include a constant $\alpha_0>0$, the strain energy $M_i$, 
the direction of movement determined by the 
strain energy density $z_i$, 
the chemotaxis force $\na c_V$ in the direction of VEGF (tip cells) and 
$\na c_D$ in the direction of DLL4 (stalk cells), 
and the migration as a result of the durotaxis force $\na f_S$. 
%If the cells are not able to adhere to a solid, they do not generate
%any force. Therefore, we assume that $M(f_S,X)=0$ and $\gamma(f_S,X)=\lambda(f_S,X)=0$
%if $f_S=0$, which means that the drift vanishes, $g_i[c,f]=0$ if $f_S=0$. 
In this paper, we allow for general drift terms by imposing suitable
Lipschitz continuity conditions; see Assumption (A4) below.
In the numerical experiments, we choose the functions $M_i$, $\gamma$,
and $\lambda$ as in Appendix \ref{app.model}.

\subsubsection*{Ordinary differential equations}
The proteins MMP and uPA degrade the basement membrane and fibrin matrix, respectively,
while enhancing the extracellular fluid component. Therefore, the volume fractions
$f_B$, $f_E$, and $f_F$ are determined by the ordinary differential equations (ODEs)
\begin{equation}\label{1.f}
\begin{aligned}
  \frac{\dd f_B}{\dd t} &= -s_Bc_Mf_B, \quad t>0, \quad f_B(0)=f_B^0, \\
	\frac{\dd f_F}{\dd t} &= -s_Fc_Uf_F, \quad t>0, \quad f_F(0)=f_F^0, \\
	\frac{\dd f_E}{\dd t} &= s_Bc_Mf_B+s_Fc_Uf_F, \quad t>0, \quad f_E(0)=1-f_B^0-f_F^0,
\end{aligned}
\end{equation}
where $s_B$, $s_F>0$ are some rate constants.
Note that the last differential equation is redundant because of the 
volume-filling condition $f_B+f_E+f_F=1$. Clearly, equations \eqref{1.f} can
be solved explicitly, giving for $(x,t)\in\dom\times(0,T)$ and
pathwise in $\Omega$ (we omit the argument $\omega$),
\begin{equation}\label{1.fBF}
\begin{aligned}
  f_B(x,t) &= f_B^0(x)\exp\bigg(-s_B\int_0^t c_M(x,s)\dd s\bigg), \\
	f_F(x,t) &= f_F^0(x)\exp\bigg(-s_F\int_0^t c_U(x,s)\dd s\bigg).
	\end{aligned}
\end{equation}

\subsubsection*{Reaction-diffusion equations}
The mass concentrations are modeled by reaction-diffusion equations, describing
the consumption and production of the proteins:
\begin{equation}\label{1.c}
\begin{aligned}
  & \pa_t c_V - \diver(D_V(f)\na c_V) + \alpha_V(x,t) c_V = 0
	\quad\mbox{in }\dom,\ t>0, \\
	& \pa_t c_D - \diver(D_D(f)\na c_D) + \beta_D(x,t) c_D 
    = \alpha_D(x,t)c_V, \\
	& \pa_t c_M - \diver(D_M(f)\na c_M) + s_{M} f_B c_M = \alpha_M(x,t) c_V, \\
	& \pa_t c_U - \diver(D_U(f)\na c_U) + s_{U} f_F c_U = \alpha_U(x,t) c_V,
\end{aligned}
\end{equation}
with initial and no-flux boundary conditions
\begin{equation}\label{1.cbic}
	c_j(0)=c_j^0\quad\mbox{in }\dom, \quad \na c_j\cdot\nu=0\quad \mbox{on }\pa\dom,\
	j=V,D,M,U,
\end{equation}
where the rate terms are given by $\alpha_V=s_V\widetilde\alpha_V$,
$\alpha_j=r_j\widetilde\alpha_j$ for $j=D,M,U$,
$\beta_D=s_D\widetilde\beta_D$, and 
\begin{equation}\label{1.ab}
  \widetilde\alpha_j(x,t) = \sum_{k=1}^{N_1}V_j^k(X_1^k(t)-x), \quad
  \widetilde\beta_D(x,t) = \sum_{k=1}^{N_2}V_D^k(X_2^k(t)-x), 
\end{equation}
for $j=V,D,M,U$ and $V_j^k:\Omega\times\R^3\to\R$ are nonnegative smooth potentials
approximating the delta distribution. The parameters $r_j$ and $s_j$ are positive.
In \cite{BMGV16}, the rate terms are given by delta distributions instead of
smooth potentials. We assume smooth potentials because of regularity issues,
but they can be given by delta-like functions as long as they are smooth.
Indeed, we need $C^{1+\delta}(\overline\dom)$ solutions $c_j$ to solve
the SDEs \eqref{1.X}, and this regularity is not possible when the
source terms of \eqref{1.c} include delta distributions.
As the number of the proteins is typically much larger than the number of tip cells,
the stochastic fluctuations in the concentrations are expected to be much smaller
than those associated with the tip cells, which justifies the macroscopic approach
using reaction-diffusion equations for the concentrations.

In equation \eqref{1.c} for $c_V$, the term $\alpha_Vc_V$ models the consumption
of VEGF along the trajectory of the tip cells. The protein DLL4 is regenerated from
conversion of VEGF, modeled by $\alpha_Dc_V$ along the trajectories
of the tip cells, and consumed by the stalk cells, modeled by $\beta_Dc_D$ 
along the trajectories of the stalk cells. In equation \eqref{1.c} for $c_M$,
the term $s_M f_B c_M$ describes the decay of the MMP concentration with rate
$s_B>0$ as a result of the breakdown of the basement membrane, and 
$\alpha_Mc_V$ models the production of MMP due to conversion from VEGF.
Similary, $s_U f_F c_U$ describes the decay of the uPA concentration, which
breaks down the fibrin matrix, and the protein uPA is regenerated, leading to
the term $\alpha_Uc_V$. 

The diffusivities are given by the mixing rule
$$
  D_j(f) = D_j^B f_B + D_j^E f_E + D_j^F f_F, \quad j=V,D,M,U,
$$
where $D_j^i>0$ for $i=B,E,F$. Then 
\begin{equation}\label{1.lowup}
  0<\min\{D_j^B,D_j^E,D_j^F\}\le D_j(f)\le \max\{D_j^B,D_j^E,D_j^F\},
\end{equation}
and equations \eqref{1.c} are uniformly parabolic. 
Note, however, that equations \eqref{1.c}
are nonlocal and quasilinear, since the diffusivities are determined by the
time integrals of $c_M$ or $c_U$; see \eqref{1.fBF}.

Various biological phenomena are not modeled by our equations.
For instance, we do not include the initiation of sprouting from preexisting parental 
vessels, the branching from a tip cell, and anastomosis (interconnection between
blood vessels). Moreover, 
in contrast to \cite{BMGV16}, we do not allow for the transition between the
phenotypes ``tip cell'' and ``stalk cell'' to simplify the presentation.
On the other hand, we may include further angiogenesis-related proteins,
if the associated reaction-diffusion equations are of the structure \eqref{1.c}.

\subsection{State of the art}

There are several approaches in the literature to model angiogenesis, mostly
in the context of tumor growth.
Cellular automata models divide the computational domain into a discrete set
of lattice points, and endothelial cells move in a discrete way. Such models
are quite flexible, and intra-cellular adherence can be easily implemented,
but their numerical solution is computationally expensive 
when the numbers of cells or molecules 
are large \cite{GrGl92}. In semicontinuous models, the cells are treated as discrete
entities, but their movement is not restricted to any lattice points \cite{ByDr09}.
Continuum-scale models consider cells as averaged quantities, leading to
cell densities whose dynamics is described by partial differential equations;
see, e.g., \cite{BrCh93} for wound healing and \cite{GPM02} for angiogenesis.
Chemotaxis can be modeled in this approach by Keller--Segel-type equations,
which admit global weak solutions in two space dimensions without blowup \cite{CPZ04}.
A hybrid approach was investigated in \cite{CGCCO12}, where the blood vessel
network is implemented on a lattice, tip cells are moving
in a lattice-free way, and other cells are modeled macroscopically as densities.
A continuum-scale approach was chosen in \cite{BMGV16}, which is the basis of
the present paper. The novelty of \cite{BMGV16} is the distinction of tip
and stalk cells and the inclusion of proteins secregated by the tip cells.

In other models, stochastic effects have been included. In \cite{StLa91}, the movement
of the tip cells is modeled by a SDE, with a deterministic part describing 
chemotaxis, and a stochastic part modeling random motion. 
The mean-field limit in a stochastic many-particle system, leading to 
reaction-diffusion equations, was performed in \cite{CaFl19,SGAMB15}. 
We also refer to the reviews \cite{CMA06,Pie08} on further modeling approaches
of angiogenesis.

Numerical simulations of a coupled SDE-PDE model
for the movement of the tip cells and the dynamics of the tumor angiogenesis factor,
fibronectin (a protein of the extracellular matrix), and matrix degrading enzymes
were presented in \cite{CaMo09}. Other SDE-PDE models in the literature are
concerned with the proton dynamics in a tumor \cite{KSS16},
acid-mediated tumor invasion \cite{HSZS18}, and viscoelastic fluids \cite{JLL05}. 
However, only the works \cite{HSZS18,KSS16} treat a genuine coupling between SDEs 
and PDEs. While the model in \cite{HSZS18} also includes a cross-diffusion term
in the equation for the cancer cells, we have simpler reaction-diffusion equations
but with nonlocal diffusivities. Up to our knowledge, the mathematical analysis 
of system \eqref{1.X}, \eqref{1.f}--\eqref{1.cbic} is new.

\subsection{Assumptions and main result}

Let $(\Omega,\F,(\F_t)_{t\ge 0},\mathbb{P})$ be a stochastic basis with a 
complete and right-continuous filtration, and let $(W_i^k(t))_{t\ge 0}$ for $i=1,2$,
$k=1,\ldots,N_i$ be independent standard Wiener processes on $\R^d$ ($d\in\N$)
relative to $(\F_t)_{t\ge 0}$. We write $L^p(\Omega,\F;B)$ for the set of all
$\F$-measurable random variables with values in a Banach space $B$, for which
the $L^p$ norm is finite. Furthermore, let $\dom\subset\R^3$ be a bounded domain
with boundary $\pa\dom\in C^3$ (needed to obtain parabolic regularity; see
Theorem \ref{thm.class}). We set $Q_T=\dom\times(0,T)$.

We write $C^{k+\delta}(\overline\dom)$ with $k\in\N_0$, $\delta\in(0,1)$
for the space of $C^k$ functions $u$ such that the $k$th derivative 
$\mathrm{D}^ku$ is H\"older continuous of index $\delta$. The space of
Lipschitz continuous functions on $\overline\dom$ is denoted by 
$C^{0,1}(\overline\dom)$. For notational convenience, 
we do not distinguigh between the spaces
$C^{k+\delta}(\overline\dom;\R^n)$ and $C^{k+\delta}(\overline\dom)$.
Furthermore, we usually drop the dependence on the variable 
$\omega\in\Omega$ in the ODEs and PDEs, which hold pathwise $\mathbb{P}$-a.s.
Accordingly, we write $c$ instead of $c(\omega,\cdot,\cdot)$ and 
$c(t)$ instead of $c(\omega,\cdot,t)$. Finally, we write ``a.s.'' instead of
``$\mathbb{P}$-a.s.''.

We impose the following assumptions:

\begin{itemize}
\item[(A1)] Initial data: $X^0_i\in L^4(\Omega,\F_0)$ satisfies $X_i^0\in\dom$
a.s.\ ($i=1,2$),
$c^0\in L^\infty(\Omega,\F_0;$ $C^{2+\delta_0}(\overline\dom))$,
$f^0\in L^\infty(\Omega,\F_0;C^{1+\delta_0}(\overline\dom))$ for some $0<\delta_0<1$;
$c_j^0\ge 0$ ($j=V,D,M,U$), $f_i^0\ge 0$ ($i=B,E,F$), and
$f_B^0+f_E^0+f_F^0=1$ in $\dom$ a.s.; $\na c_j^0\cdot\nu=0$
on $\pa\dom$ a.s.
\item[(A2)] Diffusion: $\sigma_i:\Omega\times\dom\times[0,T]\to\R^{3\times d}$
($i=1,2$) is Lipschitz continuous in $x$, has at most linear growth in $x$, 
is progressively measurable, and satisfies $\sigma_i=0$ on $\pa\dom$ a.s.
\item[(A3)] Drift: $g_i[c,f]:\Omega\times\dom\times[0,T]\to\R^3$ ($i=1,2$) for
$f,c\in C^0(\overline\dom\times[0,T])$ is progressively measurable if $(c,f)$ are,
and $g_i[c,f](x,t)=0$ for $x\in\pa\dom$, $t\in[0,T]$.
\item[(A4)] Lipschitz continuity for $g_i$: For $c,c',f,f'\in C^1(\overline\dom
\times[0,T])$, there exists $L_1>0$ such that for 
$(x,t)\in\overline\dom\times[0,T]$ and $i=1,2$,
\begin{align*}
  \big|g_i[c,f](x,t)-g_i[c',f'](x,t)\big|
	&\le L_1\big(1 + \|c\|_{L^\infty(0,t;C^1(\overline\dom))} 
	+ \|f\|_{L^\infty(0,t;C^1(\overline\dom))}\big) \\
	&\phantom{xx}{}\times
	\big(\|c(t)-c'(t)\|_{C^1(\overline\dom)} + \|f(t)-f(t')\|_{C^1(\overline\dom)}\big).
\end{align*}
Furthermore, for $c,f\in L^\infty(0,T;W^{2,\infty}(\dom))$, there exists $L_2>0$
such that for $(x,t),(x',t)\in\overline\dom\times[0,T]$ and $i=1,2$,
\begin{align*}
  \big|g_i[c,f](x,t)-g_i[c,f](x',t)\big|
	&\le L_2\big(1 + \|c\|_{L^\infty(0,T;W^{2,\infty}(\dom))}
	+ \|f\|_{L^\infty(0,T;W^{2,\infty}(\dom))}\big) \\
	&\phantom{xx}{}\times|x-x'|.
\end{align*}
\item[(A5)] Potentials: $V_j^k\in C^{0,1}(\R^3)$ for $j=V,D,M,U$, $k=1,\ldots,N_i$ 
are nonnegative functions.
\end{itemize}

Let us discuss these assumptions. Since we need $C^{1+\delta}$ solutions $(c,f)$
to obtain H\"older continuous coefficients of the SDEs (which ensures their
solvability), we need some regularity conditions on the initial data
in Assumption (A1).
Accordingly, $\na c_j^0\cdot\nu=0$ on $\pa\dom$ is a compatibility condition
needed for such a regularity result. In Assumption (A1), we impose the
volume-filling condition initially, $f_B^0+f_E^0+f_F^0=1$ in $\dom$.
Equations \eqref{1.f} then show that this condition is satisfied for all time.
The conditions $\sigma_i=0$ and $g_i[c,f]=0$ on $\pa\dom$ in Assumptions
(A2) and (A3), respectively, guarantee that 
the tip and stalk cells do not leave the domain $\dom$.
The conditions on $\sigma_i$ in Assumption (A2) and the Lipschitz continuity
of $g_i[c,f]$ in Assumption (A4) are standard hypotheses to apply existence
results for \eqref{1.X}. Note that $g_i[c,f]$ in example \eqref{1.g} satisfies
Assumption (A4). Finally, Assumption (A5) is a simplification to ensure the
parabolic regularity results needed, in turn, for the solvability of \eqref{1.X}.

\begin{theorem}[Global existence]\label{thm.ex}
Let Assumptions (A1)--(A5) hold. Then there exists a unique solution $(X,c,f)$
to \eqref{1.X}, \eqref{1.f}--\eqref{1.ab} such that 
\begin{itemize}
\item $X=(X_i^k)_{i=1,2,k=1,\ldots,N_i}$ is a strong solution to \eqref{1.X};
\item $f=(f_B,f_E,f_F)$ solves \eqref{1.f} pathwise a.s.\ in the sense of \eqref{1.fBF},
where $f_i\in C^0([0,T];$ $L^2(\dom))\cap L^\infty(Q_T)$;
\item $c=(c_V,c_D,c_M,c_U)$ is a classical solution to \eqref{1.c}--\eqref{1.cbic}
pathwise a.s.
\end{itemize}
\end{theorem}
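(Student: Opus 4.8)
The plan is to solve the full system by a fixed-point argument organized around the concentrations $c$, exploiting two structural simplifications: the volume fractions $f$ are given \emph{explicitly} in terms of $c$ through \eqref{1.fBF}, and once the cell positions $X$ are known the reaction-diffusion system \eqref{1.c} is \emph{linear} in $c$ with a triangular (cascade) structure—the equation for $c_V$ decouples, and $c_D,c_M,c_U$ are then driven by source terms involving $c_V$. I would fix a horizon $T$, work in a closed ball of the space $\mathcal{X}_T:=L^p(\Omega;C^0([0,T];C^{2+\delta}(\overline\dom)))$ of progressively measurable, nonnegative concentration fields (with $\delta\le\delta_0$ and $p$ large enough for the moment estimates below), and define a map $\Phi(c)=\tilde c$ as follows.

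Given $c$, first compute $f=f[c]$ from \eqref{1.fBF}; since the construction will enforce $c_M,c_U\ge0$, this yields $0\le f_B,f_F$ with $f_B+f_E+f_F=1$ and $f\in C^0([0,T];C^{1+\delta}(\overline\dom))$ controlled by $f^0$ and by $\int_0^t c_M$, $\int_0^t c_U$. Next, the drift $g_i[c,f]$ is determined; by Assumption (A2) on $\sigma_i$, the spatial Lipschitz bound of Assumption (A4) (whose hypotheses hold in this regularity class), and the vanishing $g_i[c,f]=\sigma_i=0$ on $\pa\dom$ from (A2)--(A3), standard SDE theory gives a unique strong, a.s.\ $\dom$-valued solution $X=X[c]$ of \eqref{1.X} with moment bounds $\E\sup_{[0,T]}|X_i^k|^p<\infty$ and a.s.\ H\"older paths (of any exponent below $1/2$). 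From $X$ I form the rate terms \eqref{1.ab}; since $V_j^k\in C^{0,1}(\R^3)$ by (A5), the coefficients $\alpha_j,\beta_D$ are nonnegative, bounded, Lipschitz in $x$, and H\"older in $t$ (inherited from the paths). Finally, using the uniform parabolicity \eqref{1.lowup} of $D_j(f)$ and the parabolic Schauder theory of Theorem \ref{thm.class} (for which $\pa\dom\in C^3$ and the compatibility $\na c_j^0\cdot\nu=0$ from (A1) are needed), I solve \eqref{1.c}--\eqref{1.cbic} pathwise and obtain a classical solution $\tilde c\in C^0([0,T];C^{2+\delta}(\overline\dom))$; the maximum principle gives $\tilde c_j\ge0$ (closing the sign requirement used for $f$) and uniform $L^\infty$ bounds.

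The fixed point is then reached in two steps. For \emph{invariance}, the a priori bounds above show that $\Phi$ maps a suitable ball of $\mathcal{X}_T$ into itself, uniformly for $T\in(0,T_0]$. For \emph{contraction}, I track differences along the construction: the explicit formula \eqref{1.fBF} gives $\|f[c_1]-f[c_2]\|_{C^1}\le C\,T\,\|c_1-c_2\|$; the first inequality of (A4) converts this into a bound on $g_i[c_1,f_1]-g_i[c_2,f_2]$; a Gronwall SDE-stability estimate bounds $\E\sup_{[0,T]}|X[c_1]-X[c_2]|^2$ by the drift difference; the Lipschitz continuity of the potentials transfers this to the rate terms; and linear parabolic stability estimates bound $\|\tilde c_1-\tilde c_2\|$ by the differences of coefficients and sources, each carrying a factor that is small for small $T$. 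Taking $L^p(\Omega)$-norms and invoking the moment bounds on $X$ closes the loop, giving $\|\Phi(c_1)-\Phi(c_2)\|\le C(T)\|c_1-c_2\|$ with $C(T)\to0$ as $T\to0$ (the contraction measured in the weaker $C^0([0,T];C^1(\overline\dom))$-based norm, the ball in the $C^{2+\delta}$ norm). Banach's theorem yields a unique solution on a short interval; since the bounds do not degenerate, the construction is repeated on consecutive intervals of fixed length to reach an arbitrary $T$, and piecewise uniqueness gives global uniqueness.

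The hard part, I expect, is the interface between the \emph{stochastic} and the \emph{pathwise-parabolic} layers. The SDE estimates live naturally in expectation/moments, whereas the Schauder estimates are pathwise in strong H\"older norms; reconciling them requires that the rate terms \eqref{1.ab} carry enough joint $(x,t)$-H\"older regularity—in particular time-regularity drawn from the H\"older continuity of the SDE paths—\emph{uniformly enough in $\omega$} to apply Theorem \ref{thm.class} and then take expectations. A second delicate point is the nonlocal quasilinearity: because $D_j(f)$ depends on the \emph{time integrals} of $c_M,c_U$, the contraction must absorb the feedback of $c$ into its own diffusivity, which is exactly where the smallness in $T$ is spent. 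Finally, showing that the cells stay in $\dom$ despite $\sigma_i$ degenerating on $\pa\dom$—so that the usual nondegenerate SDE theory fails at the boundary—requires a dedicated confinement argument built on $g_i[c,f]=\sigma_i=0$ on $\pa\dom$.
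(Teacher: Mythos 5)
Your architecture is genuinely different from the paper's: you base the fixed point at the concentrations $c$ in an $L^p(\Omega;C^0([0,T];C^{2+\delta}(\overline\dom)))$ ball and close the loop by a small-time contraction plus continuation, whereas the paper bases it at the cell positions, $\Phi\colon\widetilde X\mapsto(\alpha,\beta_D)\mapsto(c,f)\mapsto X$, on the space $Y_R(0,T;\dom)\subset C^{1/2}([0,T];L^4(\Omega))$, and obtains a \emph{global} contraction directly by iterating the operator, $\sup_{0<s<T}\big(\E|\Phi^n(X)-\Phi^n(X')|^4\big)^{1/4}\le (CT^2)^{n/4}(n!)^{-1/4}\sup_{0<s<T}\big(\E|X-X'|^4\big)^{1/4}$, so that some power of $\Phi$ contracts and the Latif variant of Banach's theorem applies; no continuation argument and no re-verification of compatibility conditions at restart times are needed. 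The paper also deliberately does \emph{not} use Schauder $C^{2+\delta}$ theory inside the loop: the SDE step only needs $c\in L^\infty(0,T;W^{2,\infty}(\dom))$ and $C^{1+\delta}$ regularity, obtained from $L^q$ maximal regularity (Theorem \ref{thm.strong}) and Lieberman's gradient estimate (Theorem \ref{thm.hoelder}); these require only \emph{bounded} $\alpha_j,\beta_D$ and H\"older continuity of $D_j(f)$ in $(x,t)$, and the latter carries a deterministic constant coming from the explicit formula \eqref{1.fBF} (the time regularity of $f$ is Lipschitz with constant $\|c_M\|_{L^\infty}$). Classical $C^{2+\delta}$ regularity is recovered only a posteriori, pathwise.

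The genuine gap in your plan sits exactly where you flag it but do not resolve it: to apply Theorem \ref{thm.class} you need $\alpha_j,\beta_D\in C^{\delta,\delta/2}(\overline\dom\times[0,T])$, and the time-H\"older seminorm of $\alpha_j(x,t)=\sum_k V_j^k(X_1^k(t)-x)$ is controlled only by the pathwise H\"older seminorm of $t\mapsto X_1^k(t)$. That seminorm is an a.s.\ finite random variable, not an element of $L^\infty(\Omega)$, so the Schauder constant entering $\|\tilde c\|_{C^0([0,T];C^{2+\delta})}$ is random and unbounded over $\Omega$; the self-mapping of a fixed ball of $L^p(\Omega;C^0([0,T];C^{2+\delta}))$ therefore does not follow as stated. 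One could try to work with $L^p$ moments of the path H\"older seminorm (finite by Kolmogorov/BDG), but the Schauder constant depends nonlinearly on the coefficient norms, so closing a moment bound for the invariance step is a substantial additional argument, not a footnote. The clean fix is the paper's: demand only as much regularity of $c$ as the SDE needs, obtained from estimates whose constants depend on $\|\alpha\|_{L^\infty}$ --- deterministic, since $X\in\overline\dom$ and the $V_j^k$ are fixed Lipschitz potentials --- and measure the fixed-point distance in $\E|X-X'|^4$. Two smaller omissions: the progressive measurability of $(c,\na c)$ as output of the pathwise parabolic solve (the paper's Lemma \ref{lem.meas}, needed before any SDE existence theorem applies) and the confinement $X_i^k(t)\in\overline\dom$ (the paper's It\^o argument with test functions supported in $\dom^c$) both require dedicated proofs that your sketch only names.
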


A strong solution $X$ to \eqref{1.X} means, according to \cite[Theorem 3.1.1]{LiRo15},
that $(X_i^k(t))_{t\ge 0}$ is an a.s.\ continuous $(\F_t)$-adapted
process such that for all $t\in[0,T]$,
\begin{equation}\label{1.intX}
  X_i^k(t) = X_i^0 + \int_0^t g_i[c,f](X_i^k(s),s)\dd s
	+ \int_0^t\sigma_i(X_i^k(s))\dd W_i^k(s)\quad a.s.
\end{equation}

\subsection{Strategy of the proof}

The proof of Theorem \ref{thm.ex} is based on a variant of
Banach's fixed-point theorem \cite[Theorem 2.4]{Lat14}
yielding global solutions. 
Let $\widetilde{X}$ be a stochastic process with a.s.\ H\"older continuous
paths and values in $\dom$ a.s. More precisely, $\widetilde{X}_i^k\in Y_R(0,T;\dom)$,
where $Y_R(0,T;\dom)\subset C^{1/2}([0,T];L^4(\Omega))$ is defined in 
\eqref{def.YR} below. Then $\alpha_j$ and $\beta_j$ are H\"older continuous in
$\overline\dom\times[0,T]$ a.s.\ as a function of $\widetilde{X}$.

As the first step, we prove the solvability of the ODEs 
\eqref{1.f} with $c_j$ only lying in the
space $L^\infty(0,T;L^2(\dom))$. This is achieved by an approximation argument,
assuming a sequence $c_j^{(k)}\in L^\infty(Q_T)$ and then passing to the
limit. This is possible since the solutions $f_i$, represented in \eqref{1.fBF},
are bounded.

Second, we show that there exists a weak solution $c$, which is defined pathwise for
$\omega\in\Omega$, to the reaction-diffusion
equations \eqref{1.c} by using a Galerkin method and the fixed-point theorem
of Schauder for fixed $\alpha_j$ and $\beta_j$. The regularity of $c$ can be
improved step by step.
Moser's iteration method shows that the weak solution is in fact bounded, and
a general regularity result for evolution equations yields $\pa_t c\in L^2(Q_T)$.
Then, interpreting equations \eqref{1.c} as elliptic equations with right-hand
side $\pa_t c\in L^2(Q_T)$, we conclude the H\"older continuity of $c(t)$
for any fixed $t\in(0,T)$. Thus, the diffusivities are H\"older continuous,
and we infer $C^{1+\delta}(\overline\dom)$ and $W^{2,\infty}(\dom)$ solutions $c$. 
We conclude classical solvability from the regularity results of 
Lady\v{z}enskaya et al.\ \cite{LSU68}.

In the third step, we solve the SDEs \eqref{1.X}. The functions
$(c,f)$ have H\"older continuous gradients, and we show that $(c,f)$ and
$(\na c,\na f)$ are progressively measurable. Therefore, together with Assumption
(A4), the conditions of the existence theorem of \cite[Theorem 3.1.1]{LiRo15}
are satisfied, and we obtain a solution $X$ to \eqref{1.X} 
in the sense \eqref{1.intX}.

Fourth, defining a suitable complete metric space $Y_R(0,T;\dom)$, endowed with
the $C^0([0,T];$ $L^4(\dom))$ norm, this defines the
fixed-point operator $\Phi:\widetilde{X}\mapsto X$ on $Y_R(0,T;\dom)$. 
The fixed-point operator can be written as the contatenation
$$
  \Phi:\widetilde{X}\mapsto (\alpha,\beta_D)\mapsto (c,f)\mapsto X,
$$
where $\alpha=(\alpha_V,\alpha_D,\alpha_M,\alpha_U)$.
It remains to show that $\Phi$ is a self-mapping and a contraction, 
which is possible for a sufficiently large $R>0$. 
In fact, we show that for any $n\in\N$,
$$
  \sup_{0<s<t}\big(\E|\Phi^n(X(t))-\Phi^n(X'(t))|^4\big)^{1/4}
	\le c_n\sup_{0<s<t}\big(\E|X(t)-X'(t)|^4\big)^{1/4}
$$
for all $X$, $X'\in Y_R(0,T;\dom)$, where $c_n\to 0$ as $n\to\infty$.
It follows from the variant of Banach's fixed-point theorem in 
\cite[Theorem 2.4]{Lat14} 
that $\Phi$ has a unique fixed point, which gives a unique solution
$(X,c,f)$ to \eqref{1.X}, \eqref{1.f}--\eqref{1.ab}. 

The paper is organized as follows. The solvability of the ODEs \eqref{1.f}
for $L^2(\dom)$ coefficients and a stability estimate are shown in Section \ref{sec.f}.
In Section \ref{sec.c}, the existence of a unique classical solution to the
reaction-diffusion equations  
\eqref{1.c}--\eqref{1.cbic} and some stability results are proved. 
The progressive measurability of the solutions to \eqref{1.f} and \eqref{1.c}
as well as the solvability of the SDEs \eqref{1.X} is verified in Section \ref{sec.X}.
Based on these results, Theorem \ref{thm.ex} is proved in Section \ref{sec.ex}.
Some numerical experiments are illustrated in Section \ref{sec.num}, showing
stalk cells following the tip cells and forming a premature sprout.
Appendix \ref{sec.app} summarizes some regularity results for elliptic and
parabolic equations used in this work, and Appendix \ref{app.model} collects the numerical values of the various parameters used in the numerical experiments.

%%%%%%%%%%%%%%%%%%%%%%%%%%%%%%%%%%%%%%%%%%%%%%%%%%%%%%%%%%%%%%%%%%%%%%%%%%%%%%

\section{Solution of ODEs in Bochner spaces}\label{sec.f}

We prove a result for ODEs in some Bochner space, which is needed for
the solution of \eqref{1.f} when the concentrations $c_j$ are only
$L^2(\dom)$ functions.

\begin{lemma}
Let $u\in L^\infty(0,T;L^2(\dom))$ and $g^0\in L^\infty(\dom)$ be 
nonnegative functions. Then
\begin{equation}\label{2.g}
  \frac{\dd g}{\dd t} = -ug, \quad t>0, \quad g(0) = g^0\quad \mbox{a.e. in }\dom,
\end{equation}
has a unique solution $g\in C^0([0,T];L^2(\dom))\cap L^\infty(Q_T)$
satisfying $0\le g\le\|g^0\|_{L^\infty(\dom)}$ a.e.\ in $Q_T$.
\end{lemma}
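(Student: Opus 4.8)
The plan is to exhibit the explicit candidate solution suggested by \eqref{1.fBF}, namely $g(x,t)=g^0(x)\exp\big(-\int_0^t u(x,s)\,\dd s\big)$, to verify directly that it has the asserted regularity and bounds, and then to prove uniqueness by an energy estimate exploiting the sign of $u$. The preliminary observation is that $u\in L^\infty(0,T;L^2(\dom))\subset L^2(Q_T)$, so by Fubini $u(x,\cdot)\in L^1(0,T)$ for a.e.\ $x\in\dom$. Hence $G(x,t):=\int_0^t u(x,s)\,\dd s$ is well defined for a.e.\ $x$, is absolutely continuous in $t$ with $\pa_t G(x,t)=u(x,t)$ for a.e.\ $(x,t)$, and satisfies $G\ge 0$ because $u\ge 0$. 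Moreover, as an $L^2(\dom)$-valued function, $t\mapsto G(t)=\int_0^t u(s)\,\dd s$ is Lipschitz, since $\|G(t)-G(t')\|_{L^2(\dom)}\le\int_{t'}^t\|u(s)\|_{L^2(\dom)}\,\dd s\le C|t-t'|$.

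From $G\ge 0$ I would read off $0<\exp(-G(x,t))\le 1$, hence $0\le g(x,t)\le g^0(x)\le\|g^0\|_{L^\infty(\dom)}$ a.e., which gives $g\in L^\infty(Q_T)$ with the stated bound and, since $\dom$ is bounded, $g(t)\in L^2(\dom)$ for every $t$. For time continuity in $L^2(\dom)$ I would use the elementary estimate $|e^{-a}-e^{-b}|\le|a-b|$ for $a,b\ge 0$, which yields $|g(x,t)-g(x,t')|\le\|g^0\|_{L^\infty(\dom)}\,|G(x,t)-G(x,t')|$ and therefore $\|g(t)-g(t')\|_{L^2(\dom)}\le\|g^0\|_{L^\infty(\dom)}\,\|G(t)-G(t')\|_{L^2(\dom)}\to 0$. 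Thus $g\in C^0([0,T];L^2(\dom))\cap L^\infty(Q_T)$ with the correct sign and bound.

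To confirm that $g$ solves \eqref{2.g}, note that for a.e.\ fixed $x$ the scalar function $t\mapsto g(x,t)$ is the Carath\'eodory solution of $\dot g=-u(x,\cdot)g$, $g(0)=g^0(x)$, so $g(x,t)=g^0(x)-\int_0^t u(x,s)g(x,s)\,\dd s$. The integrand satisfies $\|u(s)g(s)\|_{L^2(\dom)}\le\|g^0\|_{L^\infty(\dom)}\|u(s)\|_{L^2(\dom)}$, hence $s\mapsto u(s)g(s)$ is Bochner integrable in $L^2(\dom)$, and integrating the pointwise identity against test functions lifts it to the $L^2(\dom)$-valued equation $g(t)=g^0-\int_0^t u(s)g(s)\,\dd s$ with $g(0)=g^0$. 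For uniqueness, let $g_1,g_2$ be two solutions in the stated class and put $w=g_1-g_2$; then $w$ is $L^2$-valued absolutely continuous with $\dd w/\dd t=-uw\in L^2(\dom)$ (finite because $w$ is bounded and $u\in L^2(\dom)$) and $w(0)=0$. The chain rule gives
\[
  \tfrac12\frac{\dd}{\dd t}\|w(t)\|_{L^2(\dom)}^2 = -\int_\dom u\,w^2\,\dd x \le 0,
\]
the integral being finite since $uw^2\in L^1(Q_T)$ and nonpositive since $u\ge 0$. Thus $\|w(t)\|_{L^2(\dom)}^2$ is nonincreasing and vanishes at $t=0$, so $w\equiv 0$.

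The point to be careful about, and the reason a naive Cauchy--Lipschitz argument in $L^2(\dom)$ fails, is that the multiplication operator $g\mapsto ug$ is \emph{not} bounded on $L^2(\dom)$ when $u$ is merely square-integrable in space: it only maps $L^\infty(\dom)\cap L^2(\dom)$ into $L^2(\dom)$. Everything therefore hinges on the a priori $L^\infty$ bound, which in turn relies essentially on the sign condition $u\ge 0$ (ensuring $G\ge 0$ and hence $\exp(-G)\le 1$); the same sign is what makes the uniqueness estimate collapse. Alternatively, one could follow the approximation route indicated in the proof strategy, truncating $u^{(k)}:=\min\{u,k\}\in L^\infty(Q_T)$, solving the resulting genuinely Lipschitz Bochner ODE with the same uniform bound $0\le g^{(k)}\le\|g^0\|_{L^\infty(\dom)}$, and passing to the limit; this bypasses the pointwise Fubini step at the cost of a convergence argument for $\int_0^t u^{(k)}g^{(k)}\,\dd s$, which the uniform bound and $u^{(k)}\to u$ in $L^2(Q_T)$ supply.
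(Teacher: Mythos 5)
Your proof is correct, but it takes a genuinely different route from the paper. The paper does not use the closed-form solution directly: it truncates the coefficient, setting $[u]_k = u-(u-k)^+$ (i.e.\ $\min\{u,k\}$, despite the ``$\max$'' typo in the text), solves the resulting bounded-coefficient Bochner ODE explicitly, proves that $(g_k)$ is Cauchy in $C^0([0,T];L^2(\dom))$ by an energy/Gronwall estimate combined with the monotone comparison $\|[u]_k-[u]_j\|_{L^2}\le\|[u]_j-u\|_{L^2}$, and passes to the limit in the integral equation --- exactly the ``approximation route'' you sketch as an alternative in your last paragraph. You instead verify the explicit formula directly: Fubini (legitimate here since $L^\infty(0,T;L^2(\dom))\hookrightarrow L^1(Q_T)$, modulo the standard identification of the Bochner class with jointly measurable functions on $Q_T$) reduces everything to a.e.-$x$ scalar Carath\'eodory ODEs, and the bounds, the $L^2$-continuity via $|e^{-a}-e^{-b}|\le|a-b|$, and the lifting to the $L^2(\dom)$-valued integral equation all follow cleanly. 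Your version is shorter and, notably, actually proves the uniqueness asserted in the lemma via the energy identity $\tfrac12\tfrac{\dd}{\dd t}\|w\|_{L^2}^2=-\int_\dom uw^2\le 0$, a point the paper's proof leaves implicit (it only establishes existence). What the paper's approach buys in exchange is independence from having a closed-form solution --- the same Cauchy-sequence scheme would survive perturbations of the equation --- and it avoids the pointwise-in-$x$ Fubini step entirely by working throughout with $L^2(\dom)$-valued functions. Both arguments hinge on the same two structural facts you correctly isolate: the sign $u\ge 0$ (giving the uniform bound $0\le g\le\|g^0\|_{L^\infty(\dom)}$ and the dissipativity in the uniqueness estimate) and the resulting a priori $L^\infty$ control that compensates for the unboundedness of the multiplication operator $g\mapsto ug$ on $L^2(\dom)$.
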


\begin{proof}
Set $[u]_k:=\max\{k,u\}=u(t)-(u(t)-k)^+$ for $k\ge 0$, where $z^+=\max\{0,z\}$. 
We claim that $[u]_k\in L^\infty(Q_T)$
has the properties $[u(t)]_k\to u(t)$ strongly in $L^2(\dom)$ as $k\to\infty$
for a.e.\ $t\in[0,T]$ and 
\begin{equation}\label{2.ukj}
  \|[u(t)]_k-[u(t)]_j\|_{L^2(\dom)} \le \|[u(t)]_j-u(t)\|_{L^2(\dom)} \le C
	\quad\mbox{a.e. in }[0,T]
\end{equation}
for $0\le j\le k$, where $C>0$ does not depend on $k$ or $j$. Indeed, the first
inequality follows from
\begin{align*}
  \|[u&(t)]_k-[u(t)]_j\|_{L^2(\dom)}^2
	= \int_\dom\big((u(t)-k)^+ - (u(t)-j)^+\big)^2\dd x \\
	&= \int_{\{u(t)\ge k\}}\big((u(t)-k)^+ - (u(t)-j)^+\big)^2\dd x
	+ \int_{\{j\le u(t)\le k\}}[(u(t)-j)^+]^2 \dd x \\
	&= \int_{\{u(t)\ge k\}}(k-j)^2\dd x + \int_{\{j\le u(t)\le k\}}(u(t)-j)^2 \dd x \\
	&\le \int_{\{u(t)\ge k\}}(u(t)-j)^2\dd x 
	+ \int_{\{j\le u(t)\le k\}}(u(t)-j)^2 \dd x \\
	&= \int_\dom[(u(t)-j)^+]^2 \dd x = \|[u(t)]_j-u(t)\|_{L^2(\dom)}^2.
\end{align*}
Furthermore, we have
$$
  \|[u(t)]_j-u(t)\|_{L^2(\dom)}^2 = \int_\dom[(u(t)-j)^+]^2\dd x
	\le \|u\|^2_{L^\infty(0,T;L^2(\dom))} \le C. 
$$
The function $u(t)^2$ is an integrable upper
bound for $(u(t)-k)^+$. Furthermore, $[u(t)]_k-u(t)=-(u(t)-k)^+$ converges
to zero a.e.\ in $\dom$ as $k\to\infty$. Therefore, we conclude from 
dominated convergence that $[u(t)]_k-u(t)\to 0$ strongly in $L^2(\dom)$.
This proves the claim.

Next, we consider the differential equation
\begin{equation}\label{2.gk}
  \frac{\dd g_k}{\dd t} = -[u]_kg_k, \quad t>0, \quad g_k(0) = g^0\quad 
	\mbox{a.e. in }\dom.
\end{equation}
Since $[u]_k$ is bounded, there exists a unique solution 
$g_k\in C^0([0,T];L^\infty(\dom))$, given by
$$
  g_k(t) = g^0\exp\bigg(-\int_0^t[u(s)]_k\dd s\bigg), \quad 0<t<T.
$$
Clearly, we have $0<g_k(t)\le K:=\|g^0\|_{L^\infty(\dom)}$. 
We want to prove that $(g_k)$ converges as $k\to\infty$ to a solution of the
original equation. It follows that
\begin{align*}
  \frac12&\frac{\dd}{\dd t}\|g_k-g_j\|_{L^2(\dom)}^2 
	= -\int_\dom([u]_kg_k-[u]_jg_j)(g_k-g_j)\dd x \\
	&= -\int_\dom[u]_k(g_k-g_j)^2\dd x - \int_\dom([u]_k-[u]_j)g_j(g_k-g_j)\dd x \\
	&\le \frac{K}{2}\|[u]_k-[u]_j\|_{L^2(\dom)}^2 + \frac{K}{2}\|g_k-g_j\|_{L^2(\dom)}^2.
\end{align*}
We conclude from Gronwall's lemma and \eqref{2.ukj} that
\begin{align*}
  \|g_k(t)-g_j(t)\|_{L^2(\dom)}^2 
	&\le \int_0^t e^{K(t-s)}\|[u(s)]_k-[u(s)]_j\|_{L^2(\dom)}^2\dd s \\
	&\le e^{Kt}\int_0^t\|[u(s)]_j - u(s)\|_{L^2(\dom)}^2\dd s.
\end{align*}
Because of $\|[u(s)]_j - u(s)\|_{L^2(\dom)}^2\to 0$ as $j\to\infty$ and
the uniform upper bound for $[u(s)]_j-u(s)$, the dominated convergence theorem
implies that, for any $t\in[0,T]$,
$$
  \|g_k(t)-g_j(t)\|_{L^2(\dom)}^2 
	\le e^{KT}\int_0^T\|[u(s)]_j - u(s)\|_{L^2(\dom)}^2\dd s\to 0
	\quad\mbox{as }j,k\to\infty.
$$
Thus, $(g_k(t))$ is a Cauchy sequence for every $t\in[0,T]$ and we infer that
$g_k(t)\to g(t)$ in $L^2(\dom)$. Because of the
$L^\infty(0,T;L^2(\dom))$ bound for $(g_k)$ and dominated convergence again,
$g_k\to g$ in $L^2(Q_T)$, where $g\in L^\infty(0,T;L^2(\dom))$.
There exists a subsequence, which is not relabeled, such that $g_k(t)\to g(t)$
a.e.\ in $\dom$, for any $t\in[0,T]$. We deduce from $g_k(t)\le K$ that
$g(t)\le K$ for all $t\in[0,T]$. This shows that $g\in L^\infty(Q_T)$.
Writing \eqref{2.gk} as an integral equation and performing the limit $k\to\infty$,
the previous convergence results show that $g$ solves \eqref{2.g}.
\end{proof}

We also need the following stability result, which relates the difference $f_1-f_2$
of solutions to \eqref{1.f} with the difference $c_1-c_2$ of solutions to \eqref{1.c}.

\begin{lemma}\label{lem.ODEdiff}
Let $u_1$, $u_2\in L^\infty(0,T;W^{k,\infty}(\dom))$ with $k\in\N_0$, $g_1^0$,
$g_2^0\in W^{k,\infty}(\dom)$, and let
$g_1$, $g_2\in C^0([0,T];W^{k,\infty}(\dom))$ be the unique solutions to
$$
  \frac{\dd g_i}{\dd t} = -u_ig_i,\quad 0<t<T, \quad g_i(0)=g_i^0 \quad\mbox{a.e. in }
	\dom,\ i=1,2.
$$
Then there exists $C>0$, only depending on $T$, the $L^\infty(0,T;W^{k,\infty}(\dom))$
norm of $u_i$, and the $W^{k,\infty}(\dom)$ norm of $g_i^0$, such that for $p>1$,
$$
  \|g_1-g_2\|_{W^{k,p}(\dom)} \le C\big(\|g_1^0-g_2^0\|_{W^{k,p}(\dom)}
	+ \|u_1-u_2\|_{L^1(0,T;W^{k,p}(\dom))}\big).
$$
\end{lemma}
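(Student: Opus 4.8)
The plan is to prove the estimate by induction on the order $k$, reducing the $W^{k,p}$ bound to a family of $L^p$ estimates for the space derivatives $D^\alpha(g_1-g_2)$ with $|\alpha|\le k$. Writing $w=g_1-g_2$ and subtracting the two equations, one sees that $w$ solves the perturbed linear ODE
$$\frac{\dd w}{\dd t}=-u_1w-(u_1-u_2)g_2,\qquad w(0)=g_1^0-g_2^0,$$
in which the inhomogeneity $(u_1-u_2)g_2$ carries the dependence on the difference of the coefficients. For the base case $k=0$ I would test this equation with $|w|^{p-2}w$ and integrate over $\dom$: the term $-\int_\dom u_1|w|^p\,\dd x$ is controlled by $\|u_1\|_{L^\infty}\|w\|_{L^p}^p$, while H\"older's inequality bounds the inhomogeneity by $\|g_2\|_{L^\infty}\|u_1-u_2\|_{L^p}\|w\|_{L^p}^{p-1}$, leading to the standard $L^p$ differential inequality $\frac{\dd}{\dd t}\|w\|_{L^p}\le\|u_1\|_{L^\infty}\|w\|_{L^p}+\|g_2\|_{L^\infty}\|u_1-u_2\|_{L^p}$. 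Gronwall's lemma then yields the claim for $k=0$, with the factor $e^{\|u_1\|_{L^\infty}T}$ absorbed into $C$; alternatively one may use the explicit representation $g_i=g_i^0\exp(-\int_0^t u_i\,\dd s)$ together with the elementary bound $|e^{-a}-e^{-b}|\le e^{\max\{|a|,|b|\}}|a-b|$. Note that no sign condition on $u_i$ is needed, only the $L^\infty$ bound.

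For the inductive step I would differentiate the equation $\frac{\dd}{\dd t}g_i=-u_ig_i$ in space. By the Leibniz rule, for a multi-index $\alpha$ with $|\alpha|=k$,
$$\frac{\dd}{\dd t}D^\alpha g_i=-u_iD^\alpha g_i-\sum_{0<\beta\le\alpha}\binom{\alpha}{\beta}D^\beta u_i\,D^{\alpha-\beta}g_i,$$
so that the leading coefficient $-u_i$ is unchanged and the extra terms involve only derivatives of $g_i$ of order strictly less than $k$. Setting $W_\alpha=D^\alpha g_1-D^\alpha g_2$ and subtracting, the equation for $W_\alpha$ takes the form $\frac{\dd}{\dd t}W_\alpha=-u_1W_\alpha+R_\alpha$, where the source $R_\alpha$ splits, after the usual ``add and subtract'' manipulation, into the inhomogeneity $-(u_1-u_2)D^\alpha g_2$, the transport-type terms $-\sum_{0<\beta\le\alpha}\binom{\alpha}{\beta}D^\beta u_1\,W_{\alpha-\beta}$ coupling to strictly lower-order differences, and the coefficient-difference terms $-\sum_{0<\beta\le\alpha}\binom{\alpha}{\beta}(D^\beta u_1-D^\beta u_2)D^{\alpha-\beta}g_2$. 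The crucial structural point is that, because $\beta>0$, the only differences of $g$ appearing in $R_\alpha$ are the $W_{\alpha-\beta}$ with $|\alpha-\beta|<k$, which is exactly what closes the induction.

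It then remains to estimate $\|R_\alpha\|_{L^p}$, for which in each product I place the factor carrying a derivative of $g$ (or of $u$) in $L^\infty$ and the genuine difference in $L^p$. Thus $\|(u_1-u_2)D^\alpha g_2\|_{L^p}\le\|D^\alpha g_2\|_{L^\infty}\|u_1-u_2\|_{L^p}$ and $\|(D^\beta u_1-D^\beta u_2)D^{\alpha-\beta}g_2\|_{L^p}\le\|g_2\|_{W^{k,\infty}}\|u_1-u_2\|_{W^{k,p}}$, while $\|D^\beta u_1\,W_{\alpha-\beta}\|_{L^p}\le\|u_1\|_{W^{k,\infty}}\|W_{\alpha-\beta}\|_{L^p}$ is controlled by the inductive hypothesis. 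For this I first need a preliminary uniform bound $\|g_i\|_{L^\infty(0,T;W^{k,\infty}(\dom))}\le C(T,\|u_i\|_{L^\infty(0,T;W^{k,\infty})},\|g_i^0\|_{W^{k,\infty}})$, which I obtain by running exactly the same differentiated-ODE Gronwall argument in the $L^\infty$ norm rather than in $L^p$. Applying the $k=0$ Gronwall estimate to $\frac{\dd}{\dd t}W_\alpha=-u_1W_\alpha+R_\alpha$ and summing over all $|\alpha|\le k$ then produces the stated inequality, the lower-order contributions from the $W_{\alpha-\beta}$ being dominated, via the inductive hypothesis, by $\|g_1^0-g_2^0\|_{W^{k,p}}+\|u_1-u_2\|_{L^1(0,T;W^{k,p})}$.

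I expect the main difficulty to be organizational rather than analytical: keeping track of the constants so that they depend only on $T$, the $L^\infty(0,T;W^{k,\infty})$ norms of the $u_i$, and the $W^{k,\infty}$ norms of $g_i^0$, and verifying that the time-integrated source terms land precisely in the $L^1(0,T;W^{k,p})$ norm of $u_1-u_2$ claimed on the right-hand side. The only genuine analytic inputs are the elementary exponential Lipschitz bound (equivalently, the $L^p$ Gronwall estimate) and the preliminary uniform $W^{k,\infty}$ a priori bound on the solutions themselves; everything else is the Leibniz-rule bookkeeping that makes the induction on $k$ close.
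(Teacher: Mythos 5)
Your proposal is correct and follows essentially the same route as the paper: the paper writes $g_1-g_2 = g_1^0-g_2^0-\int_0^t\big(u_1(g_1-g_2)+(u_1-u_2)g_2\big)\,\dd s$, takes the $W^{k,p}(\dom)$ norm using the $L^\infty(0,T;W^{k,\infty})$ bounds on $u_1$ and $g_2$, and applies Gronwall, which is exactly your decomposition with the Leibniz-rule induction merely making explicit the product estimate $\|uv\|_{W^{k,p}}\le C\|u\|_{W^{k,\infty}}\|v\|_{W^{k,p}}$ that the paper invokes implicitly. Your version is a faithful, more detailed expansion of the paper's one-line argument, including the preliminary uniform $W^{k,\infty}$ bound on $g_i$ that the paper obtains from the explicit exponential formula.
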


\begin{proof}
The regularity of $g_i$ follows from the explicit formula and the regularity for
$g_i^0$ and $u_i$. Furthermore, taking the $W^{k,p}(\dom)$ norm of
$$
  g_1(t)-g_2(t) = g_1^0-g_2^0 - \int_0^t \big(u_1(g_1-g_2)+(u_1-u_2)g_2\big)\dd x,
$$
the result follows from the regularity $u_1\in L^\infty(0,T;W^{k,\infty}(\dom))$
and $g_2\in W^{k,\infty}(\dom)$.
\end{proof}

%%%%%%%%%%%%%%%%%%%%%%%%%%%%%%%%%%%%%%%%%%%%%%%%%%%%%%%%%%%%%%%%%%%%%%%%%%

\section{Solution of the reaction-diffusion equations}\label{sec.c}

We show the existence of a weak solution to \eqref{1.c}--\eqref{1.cbic} for given
$\alpha$, $\beta_D$ and then prove that this solution is in fact a classical one.

\subsection{Existence of weak solutions}

The existence of solutions to the reaction-diffusion equations \eqref{1.c} is shown by 
using the Galerkin method and the fixed-point theorem of Schauder.

\begin{lemma}[Existence for \eqref{1.c}]
Let $\alpha_j$, $\beta_D\in L^\infty(Q_T)$, $\alpha_j\ge 0$, $\beta_D\ge 0$ 
for $j=V,D,M,U$, $c^0\in L^2(\dom)$, and $f\in L^\infty(\dom)$ satisfying
$f_B+f_E+f_F=1$ in $\dom$. Then there exists a weak solution
$c=(c_ V,c_D,c_M,c_U)$ to \eqref{1.c}--\eqref{1.cbic}.
\end{lemma}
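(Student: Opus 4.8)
The plan is to exploit the triangular structure of the system. For fixed $f\in L^\infty(\dom)$ with $f_B+f_E+f_F=1$, the diffusivities $D_j(f)$ are bounded and, by \eqref{1.lowup}, uniformly elliptic, while the reaction coefficients $\alpha_V$, $\beta_D$, $s_Mf_B$, $s_Uf_F$ are nonnegative and bounded. The equation for $c_V$ decouples, and once $c_V$ is known, each of the equations for $c_D$, $c_M$, $c_U$ is a single linear parabolic problem with the $L^2(Q_T)$ source $\alpha_jc_V$. I would work in the Gelfand triple $H^1(\dom)\hookrightarrow L^2(\dom)\hookrightarrow H^1(\dom)'$ and seek $c_j\in L^2(0,T;H^1(\dom))$ with $\pa_tc_j\in L^2(0,T;H^1(\dom)')$ solving the weak formulation of \eqref{1.c}, in which the no-flux condition \eqref{1.cbic} is natural.

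Following the strategy indicated in the text, I would close the coupling by Schauder's fixed-point theorem on $(L^2(Q_T))^4$. Given an input $\overline c=(\overline c_V,\overline c_D,\overline c_M,\overline c_U)$, define $\Phi(\overline c)=c$ by solving the four scalar equations of \eqref{1.c} with the coupling terms evaluated at $\overline c_V$, i.e.\ with source $\alpha_j\overline c_V$ in the equations for $j=D,M,U$ and source $0$ for $c_V$. Each scalar linear parabolic problem is solvable by the Galerkin method, using the eigenfunctions of the Neumann Laplacian on $\dom$ as a basis of $L^2(\dom)$ so that the boundary condition is incorporated automatically; the resulting Galerkin coefficients satisfy a linear system of ordinary differential equations with $L^\infty$-in-time coefficients, globally solvable by Carath\'eodory's theorem. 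To apply Schauder I would verify that $\Phi$ is continuous, maps a closed ball of $(L^2(Q_T))^4$ into itself (from the a priori bound below), and is compact (from the regularizing estimate together with the Aubin--Lions lemma); a fixed point of $\Phi$ is then a weak solution.

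The analytic core is the a priori estimate. Testing the $j$th equation with $c_j$ and using \eqref{1.lowup}, the nonnegative reaction contributions may be discarded, giving
\begin{equation*}
  \frac12\frac{\dd}{\dd t}\|c_j\|_{L^2(\dom)}^2
	+ \min\{D_j^B,D_j^E,D_j^F\}\,\|\na c_j\|_{L^2(\dom)}^2
	\le \int_\dom \alpha_j\,\overline c_V\, c_j\,\dd x,
\end{equation*}
where the right-hand side is replaced by $0$ in the equation for $c_V$. For $c_V$, Gronwall's lemma immediately yields a bound in $L^\infty(0,T;L^2(\dom))\cap L^2(0,T;H^1(\dom))$ depending only on $\|c_V^0\|_{L^2(\dom)}$ and $\|\alpha_V\|_{L^\infty(Q_T)}$. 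Inserting this into the source terms of the remaining equations and applying Young's inequality and Gronwall's lemma successively produces the same type of estimate for $c_D$, $c_M$, $c_U$, uniformly at the Galerkin level; comparing in the dual space then bounds $\pa_tc_j$ in $L^2(0,T;H^1(\dom)')$. These bounds deliver both the self-mapping property of $\Phi$ and, via Aubin--Lions, its compactness.

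Finally I would pass to the limit in the Galerkin scheme. The energy bounds give weak-$*$ limits in $L^\infty(0,T;L^2(\dom))$ and weak limits in $L^2(0,T;H^1(\dom))$, and Aubin--Lions yields strong convergence in $L^2(Q_T)$. Since every term in the weak formulation is linear and the coefficients $D_j(f)$, $\alpha_j$, $\beta_D$ are fixed $L^\infty$ functions, these modes of convergence suffice to pass to the limit in each term, while the strong $L^2(Q_T)$ convergence of $c_V$ handles the source terms; the embedding $L^2(0,T;H^1(\dom))\cap H^1(0,T;H^1(\dom)')\hookrightarrow C^0([0,T];L^2(\dom))$ lets me recover the initial datum $c^0$. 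The main obstacle is bookkeeping rather than conceptual: one must make the successive Gronwall estimates close along the cascade $c_V\to(c_D,c_M,c_U)$ and secure enough compactness (the bound on $\pa_tc_j$ together with Aubin--Lions) both to limit the Galerkin approximations and to confirm that the Schauder map is compact and maps a ball into itself.
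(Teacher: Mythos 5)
There is a genuine gap: you have read ``$f\in L^\infty(\dom)$'' as a \emph{fixed, given} coefficient, so that \eqref{1.c} becomes a linear, triangular system, and your whole argument (Schauder map freezing only the source $\alpha_j\overline c_V$, linearity of every term in the passage to the limit) rests on that reading. But in the paper the hypothesis refers to the initial datum $f^0$: the $f$ appearing in $D_j(f)$ and in the reaction terms $s_Mf_Bc_M$, $s_Uf_Fc_U$ is the solution \eqref{1.fBF} of the ODEs \eqref{1.f}, i.e.\ $f_B=f_B^0\exp(-s_B\int_0^tc_M)$, $f_F=f_F^0\exp(-s_F\int_0^tc_U)$. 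The lemma therefore asserts existence for a \emph{quasilinear, nonlocal} system in which the diffusivities depend on time integrals of the unknowns (this is exactly the difficulty emphasized after \eqref{1.lowup}, and the paper's proof concludes with ``$(c,f)$ is a weak solution to \eqref{1.f}--\eqref{1.cbic}''). Your proposal never engages with this: the fixed-point map must also freeze the diffusivities at $D_j(\widehat f)$ with $\widehat f$ computed from the input $\widehat c$ via \eqref{1.fBF} (the paper does this at the Galerkin level), and the nonlinear terms $s_Mf_Bc_M$, $s_Uf_Fc_U$ are then quadratic in character, so the triangular decoupling you invoke is lost.

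Concretely, three ingredients are missing. First, to even define $f[\widehat c]$ for $\widehat c$ only in $L^2(Q_T)$ one needs the ODE solvability result of Section \ref{sec.f} (and the truncation $(c_j)^+$ in the exponent, which in turn forces you to prove nonnegativity of the limit $c$ afterwards so that the constructed $f$ really solves \eqref{1.f} with $c$ and not $c^+$). Second, continuity and compactness of the Schauder map now require the stability estimate of Lemma \ref{lem.ODEdiff}, $\|f[\widehat c_1]-f[\widehat c_2]\|_{L^2(Q_T)}\le C\|\widehat c_1-\widehat c_2\|_{L^1(0,T;L^2(\dom))}$, to control the change of the diffusion coefficients. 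Third, in the limit of the approximation scheme the term $D_j(f^N)\na c_j^N$ is a product of two sequences that both vary, so weak convergence of $\na c_j^N$ alone is not enough; one needs strong $L^2(Q_T)$ convergence of $c_M^N,c_U^N$ (Aubin--Lions), hence of $f^N$ and of $D_j(f^N)$, to identify the limit $D_j(f)\na c_j$ in $L^1(Q_T)$. With those additions your architecture (Schauder at the PDE level rather than at the finite-dimensional Galerkin level, as in the paper) would work, but as written the proof only solves a different, linear problem.
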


\begin{proof}
Let $(e_\ell)_{\ell\in\N}$ be the sequence of 
eigenfunctions of the Laplace operator with homogeneous
Neumann boundary conditions. They form an orthonormal basis of $L^2(\dom)$ and are
orthogonal with respect to the $H^1(\dom)$ norm. 
The boundary regularity $\pa\dom\in C^3$
guarantees that $e_\ell\in H^3(\dom)\hookrightarrow C^1(\overline\dom)$; 
see Theorem \ref{thm.ellip} in the Appendix. 
Let $E_N=\operatorname{span}\{e_1,\ldots,e_N\}$ for $N\in\N$.
We wish to find a solution $c^N\in C^0([0,T];E_N)$ to the finite-dimensional
problem
\begin{equation}\label{3.cN}
\begin{aligned}
  (\pa_t c_V^N,\phi_V)_{L^2} + (D_V(f^N)\na c_V^N,\na\phi_V)_{L^2}
	&= -(\alpha_V c_V^N,\phi_V)_{L^2}, \\
  (\pa_t c_D^N,\phi_D)_{L^2} + (D_D(f^N)\na c_D^N,\na\phi_D)_{L^2}
	&= (\alpha_D c_V^N,\phi_D)_{L^2} - (\beta_D c_D^N,\phi_D)_{L^2}, \\
	(\pa_t c_M^N,\phi_M)_{L^2} + (D_M(f^N)\na c_M^N,\na\phi_M)_{L^2}
	&= (\alpha_M c_V^N,\phi_M)_{L^2} - s_{M}(f_B^N c_M^N,\phi_M)_{L^2}, \\
	(\pa_t c_U^N,\phi_U)_{L^2} + (D_U(f^N)\na c_U^N,\na\phi_U)_{L^2}
	&= (\alpha_U c_V^N,\phi_U)_{L^2} - s_{U}(f_F^N c_U^N,\phi_U)_{L^2}
\end{aligned}
\end{equation}
for all $\phi=(\phi_V,\phi_D,\phi_M,\phi_U)\in E_N^4$, where
$(\cdot,\cdot)_{L^2}$ denotes the inner product in $L^2(\dom)$, together 
with the initial condition $c^N(0)=c^{0,N}$. Here, $c^{0,N}\in E_N$
satisfies $c^{0,N}\to c^0$ in $L^2(\dom)$ as $N\to\infty$. 
The function $f^N=(f_B^N,f_F^N,f_E^N)$
is the solution to \eqref{1.f} with $c_j$ replaced by $(c_j^N)^+:=\max\{0,c_j^N\}$.
An integration gives
$$
  f_B^N(t) = f^0_B\exp\bigg(-s_B\int_0^t (c_M^N)^+(s)\dd s\bigg), \quad
	f_F(t) = f^0_F\exp\bigg(-s_F\int_0^t (c_U^N)^+(s)\dd s\bigg),
$$
and $f_E^N(t) := 1-f_B^N(t)-f_F^N(t)$ for $0<t<T$.

To solve \eqref{3.cN}, we consider first a linearized version. Let
$\widehat{c}^N\in C^0([0,T];E_N)$ be given. Then
\begin{equation}\label{3.lin}
\begin{aligned}
  (\pa_t c_V^N,\phi_V)_{L^2} + (D_V(\widehat{f}^N)\na c_V^N,\na\phi_V)_{L^2}
	&= -(\alpha_V c_V^N,\phi_V)_{L^2}, \\
  (\pa_t c_D^N,\phi_D)_{L^2} + (D_D(\widehat{f}^N)\na c_D^N,\na\phi_D)_{L^2}
	&= (\alpha_D c_V^N,\phi_D)_{L^2} - (\beta_D c_D^N,\phi_D)_{L^2}, \\
	(\pa_t c_M^N,\phi_M)_{L^2} + (D_M(\widehat{f}^N)\na c_M^N,\na\phi_M)_{L^2}
	&= (\alpha_M c_V^N,\phi_M)_{L^2} - s_{M}(\widehat{f}_B^N c_M^N,\phi_M)_{L^2}, \\
	(\pa_t c_U^N,\phi_U)_{L^2} + (D_U(\widehat{f}^N)\na c_U^N,\na\phi_U)_{L^2}
	&= (\alpha_U c_V^N,\phi_U)_{L^2} - s_{U}(\widehat{f}_F^N c_U^N,\phi_U)_{L^2}
\end{aligned}
\end{equation}
for any $\phi\in E_N^4$, with the initial condition $c^N(0)=c^{0,N}$,
\begin{equation*}%\label{3.hatf}
  \widehat{f}_B^N(t) = f^0_B\exp\bigg(-s_B\int_0^t (\widehat{c}_M^N)^+(s)\dd s\bigg), 
	\quad
	\widehat{f}_F^N(t) = f^0_F\exp\bigg(-s_F\int_0^t (\widehat{c}_U^N)^+(s)\dd s\bigg),
\end{equation*}
and $\widehat{f}_E^N(t) := 1-\widehat{f}_B^N(t)-\widehat{f}_F^N(t)$ for $0<t<T$,
is a system of linear ordinary differential equations in the variables $d_{j\ell}^N$,
where $c_j^N(t) = \sum_{\ell=1}^N d_{j\ell}^N(t)e_\ell$, $j=V,D,M,U$.
By standard ODE theory, there exists a unique solution $c^N(t)$ for $t\in[0,T]$.
Taking $\phi_j=c_j^N$ for $j=V,D,M,U$ as test functions in \eqref{3.lin}
and using the positive lower and upper bounds \eqref{1.lowup}
of the diffusivities as well as the $L^\infty$ bound for $\widehat{f}$, 
standard computations lead to the estimates
\begin{equation}\label{3.est}
\begin{aligned}
  \|c^N\|_{L^\infty(0,T;L^2(\dom))} + \|c^N\|_{L^2(0,T;H^1(\dom))}
	+ \|\pa_t c^N\|_{L^2(0,T;H^1(\dom)')} &\le C(T,c^0), \\
  \|c^N\|_{L^\infty(0,T;H^1(\dom))} &\le C(N).
\end{aligned}
\end{equation}

In the following, the notation $(E_N;\|\cdot\|_Z)$ means that $E_N$ is endowed 
with the norm of the Banach space $Z$, requiring that $E_N\subset Z$.

We wish to define a fixed-point operator $\widehat{c}^N\mapsto c^N$. 
For this, we introduce the sets
\begin{align*}
  Y &= \big\{u\in L^\infty(0,T;(E_N;\|\cdot\|_{H^1(\dom)})):
	\pa_t u\in L^2(0,T;(E_N;\|\cdot\|_{H^1(\dom)})')\big\}, \\
	Y_b &= \big\{u\in Y: \|u\|_{L^\infty(0,T;H^1(\dom))} 
	+ \|\pa_t u\|_{L^2(0,T;H^1(\dom)')} \le C(T,c^0)+C(N)\big\},
\end{align*}
and $W$ is defined as the closure of $Y_b^4$ with respect to the
$C^0([0,T];L^2(\dom))$ norm. We claim that the embedding $W\hookrightarrow
C^0([0,T];(E_N;L^2(\dom)))$ is compact. Indeed, $(E_N;H^1(\dom))$ is
compactly embedded into $(E_N;L^2(\dom))$. By the Aubin--Lions lemma,
$Y$ is compactly embedded into $C^0([0,T];(E_N;L^2(\dom)))$. Consequently,
$W=\overline{Y_b}^4$ is compactly embedded into $C^0([0,T];(E_N;L^2(\dom)))$.

This defines the fixed-point operator $S:W\to C^0([0,T];(E_N;L^2(\dom)))$.
Estimates \eqref{3.est} show that $S(W)\subset Y_b^4\subset W$. Furthermore,
we can prove by standard arguments that $S:W\to W$ is continuous.
We deduce from Schauder's fixed-point theorem that 
there exists a fixed point $c^N$ of $S$ satisfying \eqref{3.cN}. 

Now, we perform the limit $N\to\infty$.
Estimates \eqref{3.est} allow us to apply the Aubin--Lions lemma, giving the
existence of a subsequence (not relabeled) such that, as $N\to\infty$,
$$
  c^N\to c\quad\mbox{strongly in }L^2(0,T;L^2(\dom))
$$
for some $c\in L^2(0,T;H^1(\dom))\cap H^1(0,T;H^1(\dom)')$. Moreover,
\begin{align*}
  c^N\rightharpoonup c &\quad\mbox{weakly in }L^2(0,T;H^1(\dom)), \\
	\pa_t c^N\rightharpoonup \pa_t c &\quad\mbox{weakly in }L^2(0,T;H^1(\dom)').
\end{align*}

We define
$$
  f_B(t) = f_B^0\exp\bigg(-s_B\int_0^t c_M^+(s)\dd s\bigg), \quad
	f_F(t) = f_F^0\exp\bigg(-s_F\int_0^t c_U^+(s)\dd s\bigg),
$$
and $f_E(t):=1-f_B(t)-f_F(t)$ for $0<t<T$. 
Lemma \ref{lem.ODEdiff} implies that
$$
  \|f_B^N-f_B\|_{L^2(Q_T)}\le C(T)\|c_M^N-c_M\|_{L^2(Q_T)}\to 0\quad\mbox{as }
	N\to\infty.
$$
We infer from the linear dependence of $D_j(f)$ on $f$ that
$D_j(f^N)-D_j(f)\to 0$ strongly in $L^2(Q_T)$ for $j=V,D,M,U$. This shows that
$D_j(f^N)\na c_j^N\to D_j(f)\na c_j$ weakly in $L^1(Q_T)$
for $j=V,D,M,U$ and $s_{M} f_B^N c_M^N\to s_{M} f_B c_M$,
$s_{U} f_F^N c_U^N\to s_{U} f_F c_U$ strongly in $L^1(Q_T)$.
Thus, passing to the limit $N\to\infty$ in equation \eqref{3.cN} for $c_M^N$,
$$
  \langle\pa_t c_M,\phi\rangle + (D_M(f)\na c_M,\na\phi)_{L^2}
	= (\alpha_M c_M,\phi)_{L^2} - s_{M}(f_B c_M,\phi)_{L^2}
$$
for all test functions $\phi(x,t)=\sum_{k=1}^N t^k e_k\in C^1(\overline{\dom}\times
[0,T])$ and similarly for the other equations in \eqref{3.cN}.
Here, $\langle\cdot,\cdot\rangle$ denotes the dual product between $H^1(\dom)$ and
$H^1(\dom)'$. Since the class of functions $\phi$ of this type is dense in
$L^2(0,T;H^1(\dom))\cap H^1(0,T;H^1(\dom)')$ \cite[Prop.~23.23iii]{Zei90},
we conclude that $(c,f)$ is a weak solution to \eqref{1.f}--\eqref{1.cbic}.

It remains to verify that the limit concentration $c$ is nonnegative componentwise.
We use $c_V^-=\min\{0,c_V\}$ as a test function in the weak formulation of
equation \eqref{1.c} for $c_V$:
$$
  \frac12\frac{\dd}{\dd t}\int_\dom(c_V^-)^2\dd x
	+ \int_\dom D_V(f)|\na c_V^-|^2\dd x = -\int_\dom\alpha_V(c_V^-)^2\dd x \le 0,
$$
which yields $c_V^-(t)=0$ and $c_V(t)\ge 0$ in $\dom$, $t>0$.
Next, we use $c_D^-$ as a test function in the weak formulation of
equation \eqref{1.c} for $c_D$:
$$
  \frac12\frac{\dd}{\dd t}\int_\dom(c_D^-)^2\dd x
	+ \int_\dom D_D(f)|\na c_D^-|^2\dd x = -\int_\dom\beta_D(c_D^-)^2\dd x
	+ \int_\dom\alpha_Dc_V c_D^-\dd x\le 0,
$$
since $c_V\ge 0$ and $c_D^-\le 0$. We infer that $c_D(t)\ge 0$ in $\dom$, $t>0$.
In a similar way, we prove that $c_M(t)\ge 0$ and $c_U(t)\ge 0$ in $\dom$, $t>0$.
This finishes the proof.
\end{proof}

%\begin{remark}\rm\label{rem.dep}
%Similarly as in the previous proof, we can derive the following
%a priori estimates for the solution $c$ to \eqref{1.c}--\eqref{1.cbic}:
%$$
%  \|c\|_{L^2(0,T;H^1(\dom))} + \|\pa_t c\|_{L^2(0,T;H^1(\dom)')}\le C_c,
%$$
%and the constant $C_c>0$ depends on the initial data, the lower bound \eqref{1.lowup}
%of the diffusivities, the end time $T$, and
%the $L^\infty$ norms of $\alpha$ and $\beta$. This property is used in the
%next section. 
%\qed
%\end{remark}

\subsection{Regularity}

The existence theory for SDEs requires more regularity for the solution $c$ to 
\eqref{1.c}. First, we prove $L^\infty$ bounds. We suppose that Assumptions
(A1)--(A5) hold in this section.

\begin{lemma}%\label{lem.Linfty}
Let $c$ be a weak solution to \eqref{1.c}--\eqref{1.cbic}. Then $c\in L^\infty(Q_T)$
and it holds for all $0<t<T$ that 
$\|c_V(t)\|_{L^\infty(\dom)}\le\|c_V^0\|_{L^\infty(\dom)}$ and
$$
  \|c_j(t)\|_{L^\infty(\dom)}\le e^t\big(\|c_j^0\|_{L^\infty(\dom)}
	+ \|\alpha_j\|_{L^\infty(0,T;L^\infty(\dom))}\|c_V^0\|_{L^\infty(\dom)}\big),
	\quad j=D,M,U.
$$
\end{lemma}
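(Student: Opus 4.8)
The plan is to prove these bounds by a pathwise Stampacchia-type truncation argument applied to the weak formulation, handling the four equations in the order $c_V$, then $c_D$, $c_M$, $c_U$, since the source terms of the latter three are driven by $c_V$. Throughout I would use the componentwise nonnegativity $c_j\ge 0$ already established, the ellipticity bounds \eqref{1.lowup}, and the regularity $c\in L^2(0,T;H^1(\dom))\cap H^1(0,T;H^1(\dom)')$ produced by the existence lemma. That regularity is exactly what makes the truncated functions admissible test functions and validates the Gelfand-triple chain rule $\langle\pa_t v,v^+\rangle=\tfrac12\tfrac{\dd}{\dd t}\|v^+\|_{L^2(\dom)}^2$, which is the engine of the whole argument.

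First, for $c_V$ I would set $M=\|c_V^0\|_{L^\infty(\dom)}$ and test the equation for $c_V$ with $(c_V-M)^+$. The diffusion term is nonnegative by \eqref{1.lowup}, and the reaction integral $\int_\dom\alpha_V c_V(c_V-M)^+\dd x\ge 0$ because $\alpha_V\ge 0$, $c_V\ge 0$, and $(c_V-M)^+\ge 0$. Hence $\tfrac{\dd}{\dd t}\|(c_V-M)^+\|_{L^2(\dom)}^2\le 0$, and since $(c_V^0-M)^+=0$ a.e.\ this forces $(c_V-M)^+\equiv 0$, i.e.\ $c_V(t)\le M$, which is the first claim.

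For $j\in\{D,M,U\}$ I would write $A_j=\|\alpha_j\|_{L^\infty(0,T;L^\infty(\dom))}\|c_V^0\|_{L^\infty(\dom)}$ and introduce the time-dependent barrier $K_j(t)=e^t(\|c_j^0\|_{L^\infty(\dom)}+A_j)$, so that $K_j'=K_j\ge A_j$ and $K_j(0)\ge\|c_j^0\|_{L^\infty(\dom)}$. Testing the $c_j$ equation with $w:=(c_j-K_j)^+$ (note $\na K_j=0$), splitting $\pa_t c_j=\pa_t(c_j-K_j)+K_j'$, and discarding the nonnegative diffusion term yields
$$
  \tfrac12\tfrac{\dd}{\dd t}\|w\|_{L^2(\dom)}^2 \le (A_j-K_j')\int_\dom w\,\dd x - \int_\dom b_j c_j w\,\dd x ,
$$
where $b_j\ge 0$ is the zeroth-order coefficient ($\beta_D$, $s_M f_B$, or $s_U f_F$) and I have used $\alpha_j c_V\le A_j$ from the $c_V$ bound just proved. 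Since $b_j,c_j,w\ge 0$ and $A_j-K_j'\le 0$, the right-hand side is nonpositive; because $w(0)=0$ we conclude $w\equiv 0$, i.e.\ $c_j(t)\le K_j(t)$, which together with $c_j\ge 0$ gives the stated estimate.

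The only genuinely delicate point is the admissibility of these test functions together with the temporal integration-by-parts formula for weak solutions; I would justify it by the standard Gelfand-triple chain rule, applied to $c_V-M$ and to $c_j-K_j$ (the latter being the difference of the Gelfand-triple function $c_j$ and the smooth, purely time-dependent barrier $K_j$). Once this is in place, the remainder is sign bookkeeping enabled by the nonnegativity of the coefficients and of $c$, which I expect to be routine.
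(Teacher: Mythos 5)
Your proposal is correct and follows essentially the same route as the paper: the $c_V$ bound via testing with $(c_V-\|c_V^0\|_{L^\infty})^+$, and the bounds for $c_D,c_M,c_U$ via the exponential barrier $e^t(\|c_j^0\|_{L^\infty}+\|\alpha_j\|_{L^\infty}\|c_V^0\|_{L^\infty})$ tested against the positive part of the excess, using the signs of $\alpha_j$, $\beta_D$, $f_B$, $f_F$ and the nonnegativity of $c$. Your explicit remark on the admissibility of the truncated test functions via the Gelfand-triple chain rule is a point the paper leaves implicit, but the substance of the argument is identical.
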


%The estimates of the previous lemma are quite crude but sufficient for our
%purpose. In fact, we can show that 
%$\|c_V(t)\|_{L^\infty(\dom)}\le e^{-\mu t}\|c_V^0\|_{L^\infty(\dom)}$ for some $\mu>0$

\begin{proof}
First, we use $(c_V-K)^+$ with $K:=\|c_V^0\|_{L^\infty(\dom)}$ as a test function
in the weak formulation of equation \eqref{1.c} for $c_V$:
$$
  \frac12\frac{\dd}{\dd t}\int_\dom[(c_V-K)^+]^2\dd x
	+ \int_{\dom} D_V(f)|\na(c_V-K)^+|^2\dd x 
	= -\int_\dom\alpha_V c_V(c_V-K)^+\dd x\le 0.
$$
We conclude that $c_V(t)\le K$ in $\dom$ for $t>0$. 

Second, we show that $c_D$ is bounded.  For this, set $M(t)=M_0 e^t$, where
$M_0=\|c_D^0\|_{L^\infty(\dom)}	+ \|\alpha_D\|_{L^\infty(0,T;L^\infty(\dom))}
\|c_V^0\|_{L^\infty(\dom)}$. Then $(c_D(0)-M)^+=0$ and, choosing $(c_D-M)^+$ 
as a test function in the weak formulation of equation \eqref{1.c} for $c_D$,
\begin{align*}
  \frac12&\frac{\dd}{\dd t}\int_\dom[(c_D-M)^+]^2\dd x
	+ \int_{\dom} D_D(f)|\na(c_D-M)^+|^2 \dd x \\
	&= -\int_\dom\pa_t M(c_D-M)^+\dd x + \int_\dom(\alpha_D c_V - \beta_D c_D)
	(c_D-M)^+\dd x \\
	&\le \int_\dom\big(-M_0+\|\alpha_D\|_{L^\infty(0,T;L^\infty(\dom))}
	\|c_V^0\|_{L^\infty(\dom)}\big)(c_D-M)^+\dd x \le 0,
\end{align*}
where we used $\beta_D\ge 0$, 
and the last inequality follows from the choice of $M_0$. This shows that
$c_D(t)\le M_0 e^t$ in $\dom$, $t>0$. The bounds for $c_M$ and $c_U$ are
shown in an analogous way.
\end{proof}

Next, we prove that the solution $c(t)$ is H\"older continuous.

\begin{lemma}\label{lem.hoelder}
Let $c$ be a
weak solution to \eqref{1.c}--\eqref{1.cbic}. We suppose that there exists
$\Lambda>0$ such that for a.e.\ $0<t<T$,
\begin{equation*}%\label{3.Lambda}
  \|\alpha_j(t)\|_{L^\infty(\dom)} + \|\beta_D(t)\|_{L^\infty(\dom)} \le\Lambda,
	\quad j=V,D,M,U. 
\end{equation*}
Then there exists $\delta>0$ such that for $0<t<T$,
$$
  \|\pa_t c\|_{L^2(Q_T)} \le C_2, \quad
	\|c(t)\|_{C^\delta(\overline\dom)} \le C_\delta\big(\|c(t)\|_{L^2(\dom)}
	+ \|\pa_t c(t)\|_{L^2(\dom)}\big),
$$
where $C_2>0$ depends on the $L^2(\dom)$ norm of $c^0$, the $L^\infty(\dom)$ norm
of $f^0$, and $\Lambda$, and 
$\delta$, $C_\delta$ depend on the lower and upper bounds \eqref{1.lowup} for 
$D_j$ and $\Lambda$.
\end{lemma}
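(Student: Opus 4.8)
The plan is to prove the two assertions in sequence: first the global bound on $\pa_t c$ in $L^2(Q_T)$, and then, slice by slice in time, the H\"older estimate obtained by reading \eqref{1.c} as a family of elliptic equations.

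For the time-derivative bound I would test each equation in \eqref{1.c} with $\pa_t c_j$. This is only formal for a weak solution, so I would run the estimate at the level of the Galerkin approximations $c^N$ from the previous lemma, where $\pa_t c_j^N\in E_N$ is an admissible test function, and pass to the limit at the end via weak lower semicontinuity of the $L^2(Q_T)$ norm. The one delicate term is the diffusion term, which I would rewrite as
$$
  \int_\dom D_j(f)\na c_j\cdot\na\pa_t c_j\,\dd x
  = \frac12\frac{\dd}{\dd t}\int_\dom D_j(f)|\na c_j|^2\,\dd x
  - \frac12\int_\dom \pa_t D_j(f)|\na c_j|^2\,\dd x.
$$
Here $D_j(f)$ depends linearly on $f$, and $\pa_t f_B=-s_Bc_Mf_B$, $\pa_t f_F=-s_Fc_Uf_F$ (and $\pa_t f_E=-\pa_t f_B-\pa_t f_F$) are bounded in $L^\infty(Q_T)$ by the $L^\infty$ bounds on $c$ and $f$ from the previous lemma, so $\pa_t D_j(f)\in L^\infty(Q_T)$ and the last term is controlled by $\|\na c_j\|_{L^2(\dom)}^2$. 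The reaction terms lie in $L^\infty(Q_T)$ thanks to those same $L^\infty$ bounds together with the hypothesis $\|\alpha_j(t)\|_{L^\infty(\dom)},\|\beta_D(t)\|_{L^\infty(\dom)}\le\Lambda$; testing them against $\pa_t c_j$ and using Young's inequality produces a term $\tfrac12\|\pa_t c_j\|_{L^2(\dom)}^2$ that I absorb on the left. Integrating in $t$, bounding $\int_\dom D_j(f(0))|\na c_j^0|^2\,\dd x$ by $\|c^0\|_{H^1(\dom)}^2$ (finite by (A1)), and closing with Gronwall's lemma applied to $\int_\dom D_j(f)|\na c_j|^2\,\dd x$ yields $\|\pa_t c\|_{L^2(Q_T)}\le C_2$ with the stated dependence of $C_2$.

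For the H\"older estimate I would freeze time. For a.e.\ $t\in(0,T)$ one has $\pa_t c(t)\in L^2(\dom)$ (by Fubini, once $\pa_t c\in L^2(Q_T)$), and $c_j(t)\in H^1(\dom)$ solves the elliptic Neumann problem
$$
  -\diver\big(D_j(f(t))\na c_j(t)\big)=h_j(t)\ \text{ in }\dom,\qquad
  \na c_j(t)\cdot\nu=0\ \text{ on }\pa\dom,
$$
where $h_j(t)$ collects $-\pa_t c_j(t)$ and the reaction terms. Since $f_B,f_F\le 1$ and the coefficients are $\Lambda$-bounded, $\|h_j(t)\|_{L^2(\dom)}\le\|\pa_t c_j(t)\|_{L^2(\dom)}+C(\Lambda)\|c(t)\|_{L^2(\dom)}$, so the right-hand side lies in $L^2(\dom)\hookrightarrow L^q(\dom)$ with $q=2>3/2=d/2$. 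The coefficient $D_j(f(t))$ is bounded and uniformly elliptic by \eqref{1.lowup}, with ellipticity constants independent of $t$. Applying the De Giorgi--Nash--Moser H\"older estimate for elliptic equations with bounded measurable coefficients under Neumann conditions (valid since $\pa\dom\in C^3$; see the Appendix) gives $c_j(t)\in C^\delta(\overline\dom)$ together with $\|c_j(t)\|_{C^\delta(\overline\dom)}\le C_\delta(\|c_j(t)\|_{L^2(\dom)}+\|h_j(t)\|_{L^2(\dom)})$, where $\delta$ and $C_\delta$ depend only on \eqref{1.lowup} and $\Lambda$. Absorbing the reaction contribution of $h_j$ into $\|c(t)\|_{L^2(\dom)}$ delivers the claimed inequality.

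The main obstacle is the rigorous justification of the time-derivative estimate: testing with $\pa_t c$ is illegitimate for a weak solution, so the computation must be performed for the Galerkin solutions and then passed to the limit, which requires that the $L^\infty$ bounds, and hence the $L^\infty(Q_T)$ bound on $\pa_t D_j(f^N)$, be uniform in $N$, and that weak lower semicontinuity be invoked for the resulting $L^2(Q_T)$ estimate. A secondary point is the borderline integrability of the elliptic right-hand side: in dimension three one needs $L^q$ with $q>d/2=3/2$, which $L^2$ provides, but the dependence of the exponent $\delta$ on the ellipticity ratio in \eqref{1.lowup} and on $\Lambda$ must be tracked so as to match the statement exactly.
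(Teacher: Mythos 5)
Your second step coincides with the paper's argument: for a.e.\ $t$ one reads \eqref{1.c} as a uniformly elliptic Neumann problem with bounded measurable coefficient $D_j(f(t))$ satisfying \eqref{1.lowup} and right-hand side $-\pa_t c_j(t)$ plus reaction terms in $L^2(\dom)\hookrightarrow L^q(\dom)$ with $q>d/2$, and a De Giorgi--Nash--Moser type estimate (the paper cites \cite[Prop.~3.6]{Nit11} together with \cite[Theorem 8.24]{GiTr98} to track the dependence of $\delta$ and $C_\delta$ on the ellipticity bounds and $\Lambda$) yields the $C^\delta$ estimate. That part is fine.

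For the first step you diverge from the paper, and your route has a gap. The paper never tests with $\pa_t c_j$: it regards each equation as a linear parabolic problem $\pa_t u-\diver(a\na u)=g$ with $a=D_j(f)\in L^\infty(Q_T)$ obeying \eqref{1.lowup}, with $g$ the reaction terms (in $L^\infty(Q_T)\subset L^2(Q_T)$ by the preceding $L^\infty$ lemma and the hypothesis on $\Lambda$) and $u^0=c_j^0\in H^1(\dom)$, and obtains $\pa_t c_j\in L^2(Q_T)$ directly from the maximal regularity statement of Theorem \ref{thm.pat}. Your energy estimate instead requires $\pa_t D_j(f^N)\in L^\infty(Q_T)$ \emph{uniformly in $N$} in order to absorb $\int_\dom\pa_t D_j(f^N)|\na c_j^N|^2\dd x$, which in turn requires a uniform $L^\infty(Q_T)$ bound on $(c_M^N)^+$ and $(c_U^N)^+$. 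You flag this requirement yourself, but it is not available at the Galerkin level: the paper's $L^\infty$ bounds are proved by Stampacchia truncation for the limiting weak solution, and truncations $(c_j^N-K)^+$ are not admissible test functions in $E_N$, so the bound does not transfer to $c^N$. With only the uniform estimates \eqref{3.est}, $\pa_t D_j(f^N)$ is controlled merely in $L^\infty(0,T;L^2(\dom))$, and the critical term cannot be closed by Gronwall. The repair is either to invoke Theorem \ref{thm.pat} as the paper does, or to carry out your computation on the weak solution itself (e.g.\ via Steklov time-averages), where $c\in L^\infty(Q_T)$ is already known and hence $\pa_t D_j(f)=-(D_j^B-D_j^E)s_Bc_Mf_B-(D_j^F-D_j^E)s_Fc_Uf_F$ is genuinely bounded.
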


\begin{proof}
The $L^2(Q_T)$ bound for $\pa_t c$ follows immediately from 
Theorem \ref{thm.pat} in the Appendix. The H\"older estimate follows from
\cite[Prop.~3.6]{Nit11}. Indeed, we interpret equation \eqref{1.c} for $c_V$,
$$
  \diver(D_V(f)\na c_V) + \alpha_Vc_V = -\pa_t c_V\in L^2(\dom) 
	\quad\mbox{for } t\in(0,T)
$$
as an elliptic equation with 
bounded diffusion coefficient $D_V(f)$ and right-hand side in $L^p(\dom)$
with $p>d/2$. By \cite[Prop.~3.6]{Nit11}, there exists $\delta>0$
such that $c_V(t)\in C^\delta(\dom)$ and
$$
  \|c_V(t)\|_{C^\delta(\dom)}\le C\big(\|c_V\|_{L^2(\dom)} 
	+ \|\pa_t c_V\|_{L^p(\dom)}\big).
$$
The result follows by observing that $d\le 3$ implies that $p<2$.
The dependency of $\delta$ and $C_\delta$ on the data follows from
\cite[Theorem 8.24]{GiTr98}, which is the essential result needed in
the proof of \cite[Prop.~3.6]{Nit11}.
The regularity for the other concentrations is proved in a similar way.
\end{proof}

\begin{lemma}\label{lem.1+delta}
Let $c$ be a weak
solution to \eqref{1.c}--\eqref{1.cbic}. Furthermore, let 
$c_j^0\in C^{1+\delta}(\overline\dom)$ such that $\na c_j^0\cdot\nu=0$ on $\pa\dom$,
$\alpha_j$, $\beta_j\in C^0(\overline{\dom}\times[0,T])$, and
$f^0_j\in C^\delta(\overline\dom)$ for $j=V,D,M,U$, 
where $\delta>0$ is as in Lemma \ref{lem.hoelder}.
Then $c\in C^{1+\delta,(1+\delta)/2}(\overline{\dom}\times[0,T])$ and there
exists $C_{1+\delta}>0$ such that
$$
  \|c\|_{C^{1+\delta,(1+\delta)/2}(\overline{\dom}\times[0,T])}
	\le C_{1+\delta}.
$$
\end{lemma}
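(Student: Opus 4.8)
The plan is to obtain the parabolic H\"older regularity by invoking the Schauder-type estimates for divergence-form parabolic equations of Lady\v{z}enskaya et al.\ \cite{LSU68}, applied to each equation of \eqref{1.c} separately. The decisive prerequisite is that the diffusion coefficients $D_j(f)$ are H\"older continuous jointly in space and time, i.e.\ $D_j(f)\in C^{\delta,\delta/2}(\overline{\dom}\times[0,T])$; once this is established, the remaining arguments are direct applications of the linear theory. I would treat the equations in the order $c_V$ first, then $c_D$, $c_M$, $c_U$, so that the coupling source terms $\alpha_j c_V$ are already under control when the subsequent equations are handled.

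The main work is the space-time H\"older continuity of $D_j(f)$. Writing $D_j(f)=D_j^E+(D_j^B-D_j^E)f_B+(D_j^F-D_j^E)f_F$ (using $f_E=1-f_B-f_F$), it suffices to show $f_B,f_F\in C^{\delta,\delta/2}(\overline{\dom}\times[0,T])$, and I rely on the explicit representation \eqref{1.fBF}, say $f_B(x,t)=f_B^0(x)\exp(-s_B\int_0^t c_M(x,s)\,\dd s)$. For the spatial regularity, Lemma \ref{lem.hoelder} gives $\|c_M(s)\|_{C^\delta(\overline\dom)}\le C_\delta(\|c_M(s)\|_{L^2(\dom)}+\|\pa_t c_M(s)\|_{L^2(\dom)})$ for a.e.\ $s$; since $c_M\in L^\infty(Q_T)$ and $\pa_t c_M\in L^2(Q_T)$ (so that $s\mapsto\|\pa_t c_M(s)\|_{L^2(\dom)}$ lies in $L^2(0,T)\subset L^1(0,T)$), the H\"older seminorm of $x\mapsto\int_0^t c_M(x,s)\,\dd s$ is bounded by $\int_0^T\|c_M(s)\|_{C^\delta(\overline\dom)}\,\dd s<\infty$, uniformly in $t$. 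Multiplying by $f_B^0\in C^\delta(\overline\dom)$ and composing with the (locally Lipschitz) exponential, which acts on a bounded argument because $c_M\in L^\infty(Q_T)$, yields $f_B(\cdot,t)\in C^\delta(\overline\dom)$ with a bound uniform in $t$. For the temporal regularity, the difference of the exponents at times $t$ and $t'$ equals $-s_B\int_{t'}^t c_M(x,s)\,\dd s$, bounded by $s_B\|c_M\|_{L^\infty(Q_T)}|t-t'|$; hence $f_B$ is Lipschitz in $t$, and in particular of class $C^{\delta/2}$ in time on $[0,T]$. The same argument applies to $f_F$, and we conclude $D_j(f)\in C^{\delta,\delta/2}(\overline{\dom}\times[0,T])$ with controlled norm.

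With this in hand, I would apply the parabolic Schauder estimate of \cite{LSU68} to the equation for $c_V$: the leading coefficient $D_V(f)$ is of class $C^{\delta,\delta/2}$ and uniformly parabolic by \eqref{1.lowup}, the zeroth-order coefficient $\alpha_V$ is bounded and continuous, the source vanishes, and the initial datum $c_V^0\in C^{1+\delta}(\overline\dom)$ satisfies the compatibility condition $\na c_V^0\cdot\nu=0$ on $\pa\dom$ (needed so that the estimate holds up to the parabolic boundary). This gives $c_V\in C^{1+\delta,(1+\delta)/2}(\overline{\dom}\times[0,T])$ with a bound depending only on the data. For $c_D$, $c_M$, $c_U$ the leading coefficients are again in $C^{\delta,\delta/2}$, the zeroth-order coefficients ($\beta_D$, $s_M f_B$, $s_U f_F$) are bounded, and the sources $\alpha_j c_V$ now belong to $L^\infty(Q_T)$ because $c_V$ is bounded; the same estimate therefore yields $c_D,c_M,c_U\in C^{1+\delta,(1+\delta)/2}(\overline{\dom}\times[0,T])$, which proves the claim.

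The hard part is the first, self-contained step: transferring the pointwise-in-time spatial H\"older control from Lemma \ref{lem.hoelder} to the time-integrated coefficients $f_B,f_F$ uniformly in $t$. This works only because the time-dependence of the H\"older seminorm of $c_M,c_U$ is integrable in $t$ (via $\pa_t c\in L^2(Q_T)$ together with the $L^\infty$ bound), while the temporal regularity of $f_B,f_F$ comes for free from the boundedness of $c_M,c_U$. After that, the only point requiring care is to decouple the system by solving the $c_V$-equation first, so that the off-diagonal sources $\alpha_j c_V$ are already bounded when the remaining three equations are treated; the regularity itself is then a direct citation of \cite{LSU68}.
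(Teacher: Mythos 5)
Your proposal is correct and follows essentially the same route as the paper: first establish that $f$ (hence $D_j(f)$) lies in $C^{\delta,\delta/2}(\overline{\dom}\times[0,T])$ via the explicit formula \eqref{1.fBF}, the spatial H\"older bound of Lemma \ref{lem.hoelder}, and the $L^\infty$ bound on $c$ for the temporal part, and then invoke linear gradient-H\"older regularity for uniformly parabolic divergence-form equations with conormal boundary conditions. The only difference is the citation: the paper uses Lieberman's result (Theorem \ref{thm.hoelder} in the Appendix), which is the precise statement needed for divergence-form equations with merely $C^{\delta,\delta/2}$ leading coefficients, rather than the Schauder theory of \cite{LSU68}.
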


The space $C^{1+\delta,(1+\delta)/2}(\overline{\dom}\times[0,T])$ consist of
all functions being $C^{1+\delta}$ in space and $C^{(1+\delta)/2}$ in time;
see the Appendix for a precise definition.

\begin{proof}
We know from Lemma \ref{lem.hoelder} that $c(t)$ is H\"older continuous 
in $\overline\dom$ for a.e.\ $t\in(0,T)$. We claim that $f$ is
H\"older continuous in $\overline\dom\times[0,T]$.
Let $x,y\in\overline\dom$ and $\tau,t\in[0,T]$. 
We assume without loss of generality that
$\tau<t$. The Lipschitz continuity of $z\mapsto\exp(-z)$ implies, using the
explicit formula for $f_B$, that
\begin{align*}
  |f_B&(x,t)-f_B(y,t)| \le |f_B^0(x)-f_B^0(y)| + s_B\int_0^t|c_M(x,s)-c_M(y,s)|\dd s, \\
	&\le \|f_B^0\|_{C^\delta(\overline\dom)} |x-y|^\delta
	+ s_BC_\delta\big(\|c_M\|_{L^1(0,t;L^2(\dom))}
	+ \|\pa_t c_M\|_{L^1(0,t;L^2(\dom))}\big)|x-y|^\delta, \\
	|f_B&(x,t)-f_B(x,\tau)| \le |f_B^0(x)|s_B\int_\tau^t|c_M(x,s)|\dd s \\
	&\le \|f_B^0\|_{C^\delta(\overline\dom)}s_B\|c_M\|_{L^\infty(Q_T)}
	T^{1-\delta/2}|t-\tau|^{\delta/2},
\end{align*}
where we also used Lemma \ref{lem.hoelder}.
Similar estimates hold for $f_E$ and $f_F$. 
Thus, the assumptions of Theorem \ref{thm.hoelder} in the Appendix 
are fulfilled, yielding the statement.
\end{proof}

For the solvability of the SDEs \eqref{1.X}, we
need the $W^{2,\infty}(\dom)$ regularity of $c$. 
For this, we introduce the following space:
\begin{equation}\label{4.W21q}
  W^{2,1,q}(Q_T) := \big\{u\in L^q(0,T;W^{2,q}(\dom)):
	\pa_t u\in L^q(Q_T)\big\},
\end{equation}
where $q\in[1,\infty]$, equipped with the norm
$$
  \|u\|_{W^{2,1,q}} = \big(\|u\|^q_{L^q(0,T;W^{2,q}(\dom))} 
	+ \|\pa_t u\|^q_{L^q(Q_T)}\big)^{1/q}.
$$

\begin{lemma}\label{lem.W2infty}
Let $c$ be a weak solution
to \eqref{1.c}--\eqref{1.cbic}. Furthermore, let $c^0\in W^{2,\infty}(\dom)$.
Then $c\in W^{2,1,\infty}(\dom\times[0,T])$ and there exists a constant $C>0$
such that
$$
  \|c\|_{W^{2,1,\infty}} \le C.
$$
\end{lemma}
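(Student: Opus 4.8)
The plan is to bootstrap the $C^{1+\delta,(1+\delta)/2}$ regularity of Lemma \ref{lem.1+delta} into a parabolic Schauder estimate, after rewriting the divergence-form equations \eqref{1.c} in non-divergence form. The key preliminary step is to upgrade the regularity of $f$. Starting from $c_M,c_U\in C^{1+\delta,(1+\delta)/2}(\overline\dom\times[0,T])$, the explicit representation \eqref{1.fBF}, and $f^0\in C^{1+\delta_0}(\overline\dom)$ from (A1), I would show that $f$ is $C^{1+\delta}$ in space and Lipschitz (hence $C^{\delta/2}$) in time. Indeed, $\int_0^t c_M(\cdot,s)\,\dd s$ inherits the spatial $C^{1+\delta}$ regularity of $c_M$ uniformly in $t$, the exponential is smooth, and the product with $f_B^0$ stays in $C^{1+\min(\delta,\delta_0)}$; the time-Hölder continuity of $\na f_B$ follows from the boundedness of $c_M$ and $\na c_M$, which makes the relevant time integrals Lipschitz in $t$. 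Consequently $D_j(f)=D_j^Bf_B+D_j^Ef_E+D_j^Ff_F$ is uniformly elliptic by \eqref{1.lowup} and satisfies $D_j(f),\na D_j(f)\in C^{\delta,\delta/2}(\overline\dom\times[0,T])$.

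With this in hand I would rewrite each equation of \eqref{1.c} in non-divergence form, e.g.\ for $c_V$,
\begin{equation*}
  \pa_t c_V - D_V(f)\Delta c_V = \na D_V(f)\cdot\na c_V - \alpha_V c_V =: G_V,
\end{equation*}
and analogously for $c_D,c_M,c_U$, moving the zeroth-order reaction and coupling terms to the right-hand side. I would then verify that each $G_j$ lies in $C^{\delta,\delta/2}(\overline\dom\times[0,T])$: the term $\na D_j(f)\cdot\na c_j$ is a product of the parabolic-Hölder functions $\na D_j(f)$ and $\na c_j$ (the latter from Lemma \ref{lem.1+delta}); the coefficients $\alpha_j,\beta_D$ of \eqref{1.ab} are Lipschitz in $x$ and Hölder in $t$ as functions of the H\"older path $\widetilde X$ and the Lipschitz potentials $V_j^k$ from (A5); and the products $\alpha_j c_V$, $\beta_D c_D$, $s_M f_B c_M$, $s_U f_F c_U$ are products of parabolic-Hölder factors, hence parabolic-Hölder.

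Having a uniformly parabolic principal part with $C^{\delta,\delta/2}$ coefficient, a $C^{\delta,\delta/2}$ right-hand side, initial data $c^0$ that by (A1) is in $C^{2+\delta_0}(\overline\dom)$ and satisfies the compatibility condition $\na c_j^0\cdot\nu=0$ on $\pa\dom$, and homogeneous Neumann lateral data on $\pa\dom\in C^3$, I would invoke the parabolic Schauder estimates of Lady\v{z}enskaya et al.\ \cite{LSU68} to conclude $c\in C^{2+\delta,1+\delta/2}(\overline\dom\times[0,T])$ with a bound depending only on the data. Since $C^{2+\delta,1+\delta/2}(\overline\dom\times[0,T])\hookrightarrow W^{2,1,\infty}(\dom\times[0,T])$ — the second spatial derivatives and $\pa_t c$ being H\"older, hence bounded — this gives $\|c\|_{W^{2,1,\infty}}\le C$ and finishes the proof. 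I expect the main obstacle to be the bootstrap establishing $\na D_j(f)\in C^{\delta,\delta/2}$: without the improved $C^{1+\delta}$ spatial regularity of $f$ one cannot pass from the divergence form to a non-divergence form with H\"older-continuous lower-order coefficient, and divergence-form parabolic equations with merely H\"older principal coefficients do not in general admit $W^{2,\infty}$ solutions.
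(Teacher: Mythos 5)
Your argument is correct, but it follows a different route from the paper's. The paper keeps the equation in divergence form: it sets $u:=c_V-c_V^0$, so that $u$ solves $\pa_t u-\diver(D_V(f)\na u)=g$ with $u(0)=0$ and $g=-\alpha_Vc_V+\diver(D_V(f)\na c_V^0)$ merely \emph{bounded}, notes that $D_V(f)\in C^{1+\delta,(1+\delta)/2}$ by Lemma \ref{lem.1+delta}, and invokes the maximal $L^q$-regularity result (Theorem \ref{thm.strong}, from \cite{LSU68}, Section IV.9) to land in $W^{2,1,q}$. You instead pass to non-divergence form and run the full parabolic Schauder theory (Theorem \ref{thm.class}), obtaining $c\in C^{2+\delta,1+\delta/2}$ and then embedding into $W^{2,1,\infty}$ --- which is essentially the content of the paper's \emph{subsequent} unnumbered lemma on classical solvability, here used as a stepping stone. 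The trade-off: your route demands more from the data --- you need $\alpha_j,\beta_D\in C^{\delta,\delta/2}$ in \emph{time} as well as space (which does hold pathwise a.s.\ via the H\"older paths of $\widetilde X$ and (A5), but is strictly more than the boundedness the paper's proof uses) and $c^0\in C^{2+\delta}$ rather than $W^{2,\infty}$ --- whereas the paper's route is lighter on hypotheses. On the other hand, your bootstrap of $\na f$ to $C^{\delta,\delta/2}$ is needed in either approach (the paper needs it implicitly to make sense of $\diver(D_V(f)\na c_V^0)$ as a bounded source and to apply the LSU theorem after conversion to non-divergence form), and your Schauder endpoint delivers the $L^\infty$-in-space-and-time bound on $\pa_t c$ and $\mathrm{D}^2c$ directly, sidestepping the question of letting $q\to\infty$ in the $L^q$ estimates, where the constants are not uniform. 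Both proofs are sound; yours proves a stronger conclusion under slightly stronger (but available) hypotheses.
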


\begin{proof}
We show the statement for $c_V$, as the proofs for the other components are similar.
The function $u:=c_V-c_V^0$ solves the linear problem
\begin{align*}
  & \pa_t u - \diver(D_V(f)\na u) = g(x,t)	\quad\mbox{in }\dom,\ t>0, \\
	& \na u\cdot\nu = 0 \quad\mbox{on }\pa\dom, \quad u(0)=0\quad\mbox{in }\dom,
\end{align*}
where $g(x,t) := -\alpha_V(x,t) c_V(x,t) + \diver(D_V(f(x,t))\na c^0(x))$ is
bounded by a constant for $(x,t)\in\dom\times(0,T)$. This constant depends
on $\Lambda$, $c^0$, $f^0$, and $T$ but not on $c$. By Lemma \ref{lem.1+delta},
the diffusion coefficient $D_V(f)$ is an element of 
$C^{1+\delta,(1+\delta)/2}(\overline{\dom}\times[0,T])$. 
Therefore, an application of Theorem \ref{thm.strong} finishes the proof.
\end{proof}

For smooth initial data, we can obtain classical solutions to 
\eqref{1.c}--\eqref{1.cbic}. The following result is not needed for the
solvability of \eqref{1.X} but stated for completeness. 

\begin{lemma}
Let $c$ be a weak solution
to \eqref{1.c}--\eqref{1.cbic}. Furthermore, let
$\pa\dom\in C^{1+\delta}$, $\alpha,\beta\in C^{\delta,\delta/2}
(\overline\dom\times[0,T])$, $f^0\in C^{1+\delta}(\overline\dom)$,
and $c^0\in C^{2+\delta}(\overline\dom)$, where $\delta>0$ is as in 
Lemma \ref{lem.hoelder}. Furthermore, let $c$ be a weak solution to
\eqref{1.c}--\eqref{1.cbic}. Then 
$c\in C^{2+\delta,1+\delta/2}(\overline\dom\times[0,T])$. 
%and there exists a constant $C_{2+\delta}>0$, depending on
%the $C^{1+\delta,(1+\delta)/2}(\overline\dom\times[0,T])$ norm of $f$ such that
%\begin{align*}
%  \|c\|_{C^{2+\delta,1+\delta/2}(\overline\dom\times[0,T])}
%	&\le C_{2+\delta}\big(\|\alpha\|_{C^{\delta,\delta/2}(\overline\dom\times[0,T])}
%	+ \|\beta\|_{C^{\delta,\delta/2}(\overline\dom\times[0,T])}\big)
%	\|c\|_{C^{\delta,\delta/2}(\overline\dom\times[0,T])} \\
%	&\phantom{xx}{}+C_{2+\delta}\|c^0\|_{C^{2+\delta}(\overline\dom)}.
%\end{align*}
\end{lemma}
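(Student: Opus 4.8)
The plan is to bootstrap the regularity already obtained in Lemma \ref{lem.1+delta}, where $c\in C^{1+\delta,(1+\delta)/2}(\overline\dom\times[0,T])$ was shown, up to $C^{2+\delta,1+\delta/2}$ by applying the classical parabolic Schauder theory of Lady\v{z}enskaya et al.\ \cite{LSU68} (see Theorem \ref{thm.class}) to each scalar equation in \eqref{1.c} separately. The essential prerequisite is to verify that, after rewriting the equations in non-divergence form, all coefficients and all right-hand sides belong to $C^{\delta,\delta/2}(\overline\dom\times[0,T])$, and that the compatibility conditions for the Neumann problem are met.

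The main step, and the point where most work is needed, is to upgrade the regularity of the volume fractions $f$ from the mere H\"older continuity established in the proof of Lemma \ref{lem.1+delta} to $\na f\in C^{\delta,\delta/2}(\overline\dom\times[0,T])$, i.e.\ $f\in C^{1+\delta,(1+\delta)/2}$. I would differentiate the explicit representation \eqref{1.fBF}; for instance,
$$
  \na f_B(x,t) = \Big(\na f_B^0(x) - f_B^0(x)\,s_B\int_0^t\na c_M(x,s)\,\dd s\Big)
  \exp\Big(-s_B\int_0^t c_M(x,s)\,\dd s\Big).
$$
By assumption $f^0\in C^{1+\delta}(\overline\dom)$, so $\na f_B^0\in C^\delta(\overline\dom)$. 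By Lemma \ref{lem.1+delta}, $\na c_M\in C^{\delta,\delta/2}$; hence the spatial difference of the time integral is controlled by $\int_0^t|\na c_M(x,s)-\na c_M(y,s)|\,\dd s\le T[\na c_M]_{C^\delta_x}|x-y|^\delta$, giving $C^\delta$ regularity in $x$ uniformly in $t$, while the time difference is bounded by $\|\na c_M\|_{L^\infty}|t-\tau|$, giving Lipschitz (hence $C^{\delta/2}$) regularity in $t$. Since the exponential factor is smooth and, by the same argument, Lipschitz in $(x,t)$, products and compositions preserve these H\"older seminorms. The analogous estimate holds for $f_F$, and $f_E=1-f_B-f_F$. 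The delicate point is matching the mixed space--time regularity to the parabolic H\"older class, in particular confirming the $C^{\delta/2}$ time modulus for $\na f$.

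Given this, the diffusivities $D_j(f)=D_j^Bf_B+D_j^Ef_E+D_j^Ff_F$ depend linearly on $f$, so $D_j(f)\in C^{1+\delta,(1+\delta)/2}$ and in particular $\na D_j(f)\in C^{\delta,\delta/2}$. Writing the equation for $c_V$ in non-divergence form,
$$
  \pa_t c_V - D_V(f)\Delta c_V - \na D_V(f)\cdot\na c_V + \alpha_V c_V = 0,
$$
its principal coefficient $D_V(f)$, first-order coefficient $\na D_V(f)$, and zeroth-order coefficient $\alpha_V$ all lie in $C^{\delta,\delta/2}$ (the last by hypothesis). The same holds for the remaining equations, whose zeroth-order coefficients $\beta_D$, $s_Mf_B$, $s_Uf_F$ and whose source terms $\alpha_Dc_V$, $\alpha_Mc_V$, $\alpha_Uc_V$ are products of H\"older-continuous functions: indeed $c_V\in C^{1+\delta,(1+\delta)/2}\hookrightarrow C^{\delta,\delta/2}$ is already known, so no further bootstrapping in the coupling is necessary.

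Finally, I would check the hypotheses of Theorem \ref{thm.class}: the initial data satisfy $c_j^0\in C^{2+\delta}(\overline\dom)$, the zeroth-order compatibility condition $\na c_j^0\cdot\nu=0$ on $\pa\dom$ holds by (A1), and the boundary is smooth enough ($\pa\dom\in C^3\subset C^{2+\delta}$, so that $\nu\in C^{1+\delta}$). Applying the Schauder estimate of \cite{LSU68} to each equation then yields $c_j\in C^{2+\delta,1+\delta/2}(\overline\dom\times[0,T])$, which is the claim. I expect the only genuine obstacle to be the H\"older regularity of $\na f$ in the second step; the remaining verifications are routine products and compositions of H\"older functions together with a direct invocation of the linear parabolic theory.
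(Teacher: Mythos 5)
Your proposal is correct and follows essentially the same route as the paper, whose proof is simply the one-line remark that the result follows from Theorem \ref{thm.class} after bringing \eqref{1.c} into nondivergence form; you have merely filled in the routine verifications (H\"older regularity of $\na f$ via the explicit formula \eqref{1.fBF}, membership of the nondivergence-form coefficients and source terms in $C^{\delta,\delta/2}$, and the compatibility and boundary-regularity hypotheses) that the paper leaves implicit.
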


\begin{proof}
The result follows from Theorem \ref{thm.class} in the Appendix 
after bringing \eqref{1.c} in nondivergence form.
\end{proof}

\subsection{Stability and uniqueness}

The stability results are used for the solution of the SDEs; they also imply
the uniqueness of weak solutions. We start with a stability estimate
in the norms $L^\infty(0,T;L^2(\dom))$ and $L^2(0,T;H^1(\dom))$.
Let Assumptions (A1)--(A5) hold.

\begin{lemma}\label{lem.stab1}
Let $c_i$ for $i=1,2$ be weak solutions to \eqref{1.c}--\eqref{1.cbic} with
the same initial data $(c^0,f^0)$ but possibly different coefficients
$\alpha_i$ and $\beta_i$. 
Then there exists $C>0$, which is independent of $c_i$, such that for all $t\in[0,T]$,
\begin{align*}
  & \|(c_1-c_2)(t)\|_{L^2(\dom)} + \|c_1-c_2\|_{L^2(0,t;H^1(\dom))}
	\le h(t), \quad\mbox{where} \\
	& h(t) := C\big(\|\alpha_1-\alpha_{2}\|_{L^2(Q_T)}
	+ \|\beta_1-\beta_2\|_{L^2(Q_T)}\big).
\end{align*}
\end{lemma}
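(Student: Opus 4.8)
The plan is to derive a Gronwall inequality for $y(t):=\|(c_1-c_2)(t)\|_{L^2(\dom)}^2$ whose only driving terms are the discrepancies $\|\alpha_1-\alpha_2\|_{L^2(Q_T)}$ and $\|\beta_1-\beta_2\|_{L^2(Q_T)}$. Write $d=c_1-c_2$ with components $d_j$ ($j=V,D,M,U$), and let $f_1,f_2$ be the volume fractions associated with $c_1,c_2$ through \eqref{1.fBF}. Subtracting the two weak formulations componentwise, I split each nonlinearity into a diagonal part and a remainder: for the flux, $D_j(f_1)\na c_{j,1}-D_j(f_2)\na c_{j,2}=D_j(f_1)\na d_j+(D_j(f_1)-D_j(f_2))\na c_{j,2}$, and for the reaction/source terms, e.g.\ $\alpha_{j,1}c_{V,1}-\alpha_{j,2}c_{V,2}=\alpha_{j,1}d_V+(\alpha_{j,1}-\alpha_{j,2})c_{V,2}$, and analogously for the $\beta_D c_D$, $s_M f_B c_M$, and $s_U f_F c_U$ terms.

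First I would test the difference equation for each component with $d_j$. The diagonal diffusion term is coercive by the lower bound in \eqref{1.lowup}, giving $\int_\dom D_j(f_1)|\na d_j|^2\,dx\ge D_{\min}\|\na d_j\|_{L^2}^2$, which I keep on the left. The diagonal zeroth-order self-terms, namely $\int_\dom\alpha_{V,1}d_V^2$, $\int_\dom\beta_{D,1}d_D^2$, $s_M\int_\dom f_{B,1}d_M^2$, and $s_U\int_\dom f_{F,1}d_U^2$, all have a favorable sign since $\alpha,\beta,f_B,f_F\ge0$, so they may be discarded. The source remainders $(\alpha_{j,1}-\alpha_{j,2})c_{V,2}$ and $(\beta_{D,1}-\beta_{D,2})c_{D,2}$ are handled with the uniform $L^\infty$ bounds on $c_{V,2},c_{D,2}$ proved earlier in this section, yielding after Young's inequality contributions of the form $C(\|\alpha_1-\alpha_2\|_{L^2(\dom)}^2+\|\beta_1-\beta_2\|_{L^2(\dom)}^2)+C\,y$.

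The genuinely nonlinear term is the flux remainder $\int_\dom(D_j(f_1)-D_j(f_2))\na c_{j,2}\cdot\na d_j\,dx$. Since $D_j$ depends linearly on $f$, one has $|D_j(f_1)-D_j(f_2)|\le C|f_1-f_2|$ pointwise; using the uniform bound $\|\na c_{j,2}\|_{L^\infty(Q_T)}\le C$ from the $C^{1+\delta}$ regularity of Lemma \ref{lem.1+delta}, this term is bounded by $C\|f_1-f_2\|_{L^2(\dom)}\|\na d_j\|_{L^2(\dom)}$, whose gradient factor is absorbed into the coercive term at the cost of $C\|f_1-f_2\|_{L^2(\dom)}^2$. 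To close the loop I invoke the ODE stability estimate of Lemma \ref{lem.ODEdiff} (with $k=0$, $p=2$): because $f_1,f_2$ share the initial datum $f^0$, it gives $\|f_1(t)-f_2(t)\|_{L^2}\le C\int_0^t(\|d_M(s)\|_{L^2}+\|d_U(s)\|_{L^2})\,ds$, and hence by Cauchy--Schwarz $\|f_1(t)-f_2(t)\|_{L^2}^2\le C\int_0^t y(s)\,ds$.

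Summing the four component estimates and setting $y(t)=\sum_j\|d_j(t)\|_{L^2}^2$, I obtain $\tfrac12 y'(t)+c_0\sum_j\|\na d_j\|_{L^2}^2\le C\,y(t)+C\int_0^t y(s)\,ds+C(\|\alpha_1-\alpha_2\|_{L^2(\dom)}^2+\|\beta_1-\beta_2\|_{L^2(\dom)}^2)$. Integrating in time with $y(0)=0$ (equal initial data), the iterated integral $\int_0^t\int_0^s y\,dr\,ds$ is dominated by $T\int_0^t y\,ds$ and merges with the other linear term, so Gronwall's lemma yields $y(t)\le C(\|\alpha_1-\alpha_2\|_{L^2(Q_T)}^2+\|\beta_1-\beta_2\|_{L^2(Q_T)}^2)$; feeding this back into the integrated energy identity controls $\int_0^t\sum_j\|\na d_j\|_{L^2}^2$ as well, and taking square roots gives exactly the claimed bound with $h(t)$. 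The main obstacle is precisely the quasilinear, nonlocal coupling through the $f$-dependence of the diffusivities: it is what forces the memory term $\int_0^t y$ into the inequality, and the estimate only closes because the uniform gradient bound on $c_2$ combined with Lemma \ref{lem.ODEdiff} keeps this term of strictly lower order than $y$ itself.
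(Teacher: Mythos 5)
Your argument is correct and follows essentially the same route as the paper: test the difference equation with $c_1-c_2$, split the flux and reaction terms into a coercive/signed diagonal part plus remainders, control $D_j(f_1)-D_j(f_2)$ via the linearity of $D_j$, the uniform bound on $\na c_{2,j}$, and the ODE stability estimate of Lemma \ref{lem.ODEdiff}, then close with Gronwall. Your treatment is in fact slightly more explicit than the paper's about the memory term $\int_0^t y(s)\,\dd s$ arising from the nonlocal dependence of $f$ on $c$, which the paper absorbs somewhat tersely into its Gronwall step.
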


\begin{proof}
We first consider $c_V$. We take the difference of the equations satisfied by
$c_{1,V}-c_{2,V}$ and take the test function $c_{1,V}-c_{2,V}$ in its weak formulation.
This leads to
\begin{align*}
  \frac12\frac{\dd}{\dd t}&\int_\dom(c_{1,V}-c_{2,V})^2\dd x
	+ \int_\dom D_V(f_1)|\na(c_{1,V}-c_{2,V})|^2\dd x 
	+ \int_\dom\alpha_{1,V}(c_{1,V}-c_{2,V})^2\dd x \\
	&= \int_\dom(D_V(f_1)-D_V(f_2))\na c_{2,V}\cdot\na(c_{1,V}-c_{2,V})\dd x \\
	&\phantom{xx}{}+ \int_\dom (\alpha_{1,V}-\alpha_{2,V})c_{2,V}(c_{1,V}-c_{2,V})\dd x.
\end{align*}
Let $\eps:=\min\{D_j^i:j=V,D,M,U,\,i=B,E,F\}>0$.
Using Young's inequality and the estimate $\|(f_{1}-f_{2})(t)\|_{L^2(\dom)}
\le C\|c_{1}-c_{2}\|_{L^1(0,T;L^2(\dom))}$ from Lemma \ref{lem.ODEdiff}, we find that
\begin{align*}
  \frac{\dd}{\dd t}&\|c_{1,V}-c_{2,V}\|_{L^2(\dom)}^2
	+ \frac{\eps}{2}\|\na(c_{1,V}-c_{2,V})\|_{L^2(\dom)}^2 \\
	&\le C(\eps)\|\na c_{2,V}\|_{L^\infty(Q_T)}^2\|D_V(f_1)-D_V(f_2)\|_{L^2(\dom)}^2 \\
	&\phantom{xx}{}+ \|c_{2,V}\|_{L^2(Q_T)}^2\|\alpha_{1,V}-\alpha_{2,V}\|_{L^2(\dom)}^2 
	+ \|c_{1,V}-c_{2,V}\|_{L^2(\dom)}^2 \\
	&\le C\|c_{1}-c_{2}\|_{L^2(\dom)}^2 
	+ C\|\alpha_{1,V}-\alpha_{2,V}\|_{L^2(\dom)}^2.
\end{align*}
The estimates for $c_{1,j}-c_{2,j}$ with $j=D,M,V$ are similar. This gives
$$
  \frac{\dd}{\dd t}\|c_1-c_2\|_{L^2(\dom)}^2 
	+ \frac{\eps}{2}\|\na(c_1-c_2)\|_{L^2(\dom)}^2
	\le Ch(t) + C\|c_1-c_2\|_{L^2(\dom)}^2.
$$
An application of Gronwall's lemma finishes the proof.
\end{proof}

A stability estimate can also be proved with respect to the $H^2(\dom)$ norm.

\begin{lemma}\label{lem.stab2}
Let $c_i$ for $i=1,2$ be weak solutions to \eqref{1.c}--\eqref{1.cbic} with
the same initial data $(c^0,f^0)$ but possibly different coefficients
$\alpha_i$ and $\beta_i$. Then there exists $C>0$ such that for all
$t\in[0,T]$,
\begin{align*}
  \|\pa_t&(c_1-c_2)\|_{L^2(Q_T)} + \|c_1-c_2\|_{L^\infty(0,T;H^1(\dom))}
	+ \|c_1-c_2\|_{L^2(0,T;H^2(\dom))} \\
	&\le C\big(\|\alpha_1-\alpha_2\|_{L^2(Q_T)} + \|\beta_1-\beta_2\|_{L^2(Q_T)}\big),
\end{align*}
where $C>0$ depends on $\|c_1\|_{L^\infty(0,T;W^{2,\infty}(\dom))}$.
\end{lemma}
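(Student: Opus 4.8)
The plan is to prove the $H^2$-stability estimate of Lemma~\ref{lem.stab2} by differentiating the equation (or rather testing with second-order quantities) and building on the lower-order control already supplied by Lemma~\ref{lem.stab1}. Writing $w=c_1-c_2$ for the difference and subtracting the weak formulations of \eqref{1.c} for $c_1$ and $c_2$, I would first treat the $c_V$ component, whose equation does not couple to the other concentrations. The difference $w_V$ solves
\begin{equation*}
  \pa_t w_V - \diver(D_V(f_1)\na w_V)
	= \diver\big((D_V(f_1)-D_V(f_2))\na c_{2,V}\big)
	- \alpha_{1,V}w_V - (\alpha_{1,V}-\alpha_{2,V})c_{2,V}.
\end{equation*}
The key test function is $\pa_t w_V$. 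Testing with it produces $\|\pa_t w_V\|_{L^2}^2$ on the left together with a term $\tfrac12\frac{\dd}{\dd t}\int_\dom D_V(f_1)|\na w_V|^2\,\dd x$, up to a commutator $\tfrac12\int_\dom \pa_t D_V(f_1)|\na w_V|^2\,\dd x$ that is controlled because $\pa_t D_V(f_1)\in L^\infty(Q_T)$ by the regularity of $f_1$ and Lemma~\ref{lem.W2infty}. On the right-hand side one integrates the divergence term by parts and absorbs $\|\pa_t w_V\|_{L^2}^2$ using Young's inequality; this is where the hypothesis that $C$ may depend on $\|c_1\|_{L^\infty(0,T;W^{2,\infty}(\dom))}$ enters, since $\na c_{2,V}$ and $\diver(\cdots)$ must be bounded in $L^\infty$.

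Next I would run Lemma~\ref{lem.ODEdiff} with $k=1$ to control $\|f_1-f_2\|_{W^{1,2}}$, hence $\|D_V(f_1)-D_V(f_2)\|_{H^1(\dom)}$, by $\|c_1-c_2\|_{L^1(0,T;H^1(\dom))}$, which is already dominated by the right-hand side $h(t)$ through Lemma~\ref{lem.stab1}. After integrating in time and applying Gronwall's lemma, this yields the bounds on $\|\pa_t w_V\|_{L^2(Q_T)}$ and $\|w_V\|_{L^\infty(0,T;H^1(\dom))}$ in terms of $\|\alpha_{1,V}-\alpha_{2,V}\|_{L^2(Q_T)}$. To recover the $L^2(0,T;H^2(\dom))$ norm, I would reinterpret the equation elliptically at each fixed time, $-\diver(D_V(f_1)\na w_V)=\pa_t w_V + (\text{lower-order terms})\in L^2(\dom)$, and invoke elliptic $H^2$-regularity for the divergence-form operator with $C^\delta$ coefficient $D_V(f_1)$ (using $\pa\dom\in C^3$ and the no-flux boundary condition), converting the already-established $L^2(Q_T)$ control of $\pa_t w_V$ and the lower-order terms into the $H^2$ bound.

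The remaining components $c_D,c_M,c_U$ are handled analogously, with the only new feature being the coupling to $w_V$ through the source terms $\alpha_D c_V$, $\alpha_M c_V$, $\alpha_U c_V$ and the reaction terms $s_M f_B c_M$, $s_U f_F c_U$; since $w_V$ has already been estimated, these appear as controlled forcing and are absorbed by the same Young/Gronwall scheme, with the $f_B,f_F$ differences again handled by Lemma~\ref{lem.ODEdiff}. The main obstacle is the commutator term $\tfrac12\int_\dom \pa_t D_V(f_1)|\na w_V|^2\,\dd x$ and, more delicately, the control of $\diver\big((D_V(f_1)-D_V(f_2))\na c_{2,V}\big)$ after integration by parts: this demands that $c_{2}$ carry two spatial derivatives in $L^\infty$, which is precisely why the constant $C$ is allowed to depend on $\|c_1\|_{L^\infty(0,T;W^{2,\infty}(\dom))}$ (here one uses $c_1$ as the more regular solution, so that the products involving $\na c_{2,V}$ are instead written with $\na c_{1,V}$ where regularity is available). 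Keeping track of which solution supplies the regularity, and ensuring the $f$-difference estimates are taken in the $H^1$ rather than merely $L^2$ norm, is the technical heart of the argument; everything else is a routine energy estimate closed by Gronwall's lemma.
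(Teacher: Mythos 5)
Your proposal follows the same overall route as the paper: the same linear equation for the difference $w_V=c_{1,V}-c_{2,V}$ with the same right-hand side \eqref{4.g}, the same use of Lemma \ref{lem.ODEdiff} (with $k=1$) and Lemma \ref{lem.stab1} to control the $f$-differences and the lower-order terms, and the same splitting into a first-order parabolic estimate followed by a separate argument for the $H^2$ norm. The two points where you diverge are cosmetic rather than structural. First, you derive the $\|\pa_t w_V\|_{L^2(Q_T)}+\|w_V\|_{L^\infty(0,T;H^1)}$ bound by testing with $\pa_t w_V$ and handling the commutator $\tfrac12\int_\dom\pa_t D_V(f_1)|\na w_V|^2\,\dd x$ by hand; the paper instead cites Theorem \ref{thm.pat} as a black box, whose proof is exactly this energy estimate, so nothing is gained or lost (your observation that $\pa_t D_V(f_1)=D_V(\pa_t f_1)\in L^\infty(Q_T)$ via \eqref{1.f} is the correct way to close that term). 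Second, for the $H^2$ bound you invoke elliptic regularity at fixed time, whereas the paper computes $D_V(f_1)\Delta w_V$ algebraically from the equation and uses $\|w_V\|_{H^2}\le C(\|\Delta w_V\|_{L^2}+\|w_V\|_{L^2})$; both work, but note that your phrase ``elliptic $H^2$-regularity for the divergence-form operator with $C^\delta$ coefficient'' is too weak as stated --- $H^2$ regularity fails for merely H\"older-continuous divergence-form coefficients, and one genuinely needs $D_V(f_1)$ to be spatially Lipschitz so that $\na D_V(f_1)\cdot\na w_V\in L^2$ can be moved to the right-hand side. This regularity is in fact available (Lemma \ref{lem.1+delta} gives $D_V(f_1)\in C^{1+\delta}$ in space), so the step is salvageable, but it should be stated with the correct hypothesis. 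Your closing remark about ``using $c_1$ as the more regular solution'' is a non-issue: Lemma \ref{lem.W2infty} applies to both $c_1$ and $c_2$, so either may supply the $W^{2,\infty}$ bounds on $\na c_{2,V}$ and $\Delta c_{2,V}$.
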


\begin{proof}
The difference $u:=c_{1,V}-c_{2,V}$ is the solution to the linear problem
\begin{equation}\label{4.u}
\begin{aligned}
  & \pa_t u - \diver(D_V(f_1)\na u) = g(x,t)\quad\mbox{in }\dom,\ t>0, \\
	& \na u\cdot\nu = 0\quad\mbox{on }\pa\dom, \quad u(0)=0\quad\mbox{in }\dom,
\end{aligned}
\end{equation}
where, by Lemma \ref{lem.W2infty}, the right-hand side
\begin{equation}\label{4.g}
  g := -\diver\big((D_V(f_1)-D_V(f_2))\na c_{2,V}\big)
	+ \alpha_{1,V}(c_{1,V}-c_{2,V}) + (\alpha_{1,V}-\alpha_{2,V})c_{2,V}
\end{equation}
is an element of $L^2(Q_T)$. Since the diffusion coefficient is bounded, we can apply
Theorem \ref{thm.pat} in the Appendix to conclude that 
$$
  \|u\|_{L^\infty(0,T;H^1(\dom))} + \|\pa_t u\|_{L^2(Q_T)}
	\le C\|g\|_{L^2(Q_T)}.
$$
For the estimate of the right-hand side, we recall from Lemma \ref{lem.ODEdiff} that
$$
  \|\na(f_1-f_2)\|_{L^2(Q_T)} \le C\|c_1-c_2\|_{L^1(0,T;H^1(\dom))}.
$$
Then, using the linearity of $D_V$ and the estimate for $c_{2,V}$ from Lemma 
\ref{lem.W2infty}, we infer that
\begin{align*}
  \|g\|_{L^2(Q_T)} &\le C\|\na(f_1-f_2)\|_{L^2(Q_T)}\|\na c_{2,V}\|_{L^\infty(Q_T)}
	+ C\|f_1-f_2\|_{L^2(Q_T)}\|\Delta c_{2,V}\|_{L^\infty(Q_T)} \\
	&\phantom{xx}{}+ \|\alpha_{1,V}\|_{L^\infty(Q_T)}\|u\|_{L^2(Q_T)}
	+ \|\alpha_{1,V}-\alpha_{2,V}\|_{L^2(Q_T)}\|c_{2,V}\|_{L^\infty(Q_T)} \\
	&\le C\big(\|c_1-c_2\|_{L^2(0,T;H^1(\dom))} 
	+ \|\alpha_{1,V}-\alpha_{2,V}\|_{L^2(Q_T)}\big).
\end{align*}
The difference $c_1-c_2$ in the $L^2(0,T;H^1(\dom))$ norm can be estimated
according to Lemma \ref{lem.stab1}. Therefore,
\begin{equation}\label{4.uH1}
  \|u\|_{L^\infty(0,T;H^1(\dom))} + \|\pa_t u\|_{L^2(Q_T)}
	\le C\big(\|\alpha_1-\alpha_2\|_{L^2(Q_T)} + \|\beta_1-\beta_2\|_{L^2(Q_T)}\big).
\end{equation}
Similar estimates can be derived for the differences $c_{1,j}-c_{2,j}$
($j=D,M,U$). 

To estimate $u$ in the $L^2(0,T;H^2(\dom))$ norm, we use the inequality
$$
  \|u\|_{H^2(\dom)} \le C\big(\|\Delta u\|_{L^2(\dom)} + \|u\|_{L^2(\dom)}\big).
$$
Thus, it remains to consider $\Delta u$. We deduce from
\begin{align*}
  D_{V}(f_1)\Delta u &= \diver\big(D_{V}(f_1)\na c_{1,V}-D_{V}(f_2)\na c_{2,V}\big)
	- \na(D_{V}(f_1)-D_{V}(f_2))\cdot\na c_{2,V} \\
	&\phantom{xx}{}- (D_{V}(f_1)-D_{V}(f_2))\Delta c_{2,V}
	- \na D_{V}(f_1)\cdot\na(c_{1,V}-c_{2,V}) \\
	&= \pa_t u - \alpha_{1,V}u - (\alpha_{1,V}-\alpha_{2,V})c_{2,V}
	- \na(D_{V}(f_1)-D_{V}(f_2))\cdot\na c_{2,V} \\
	&\phantom{xx}{}- (D_{V}(f_1)-D_{V}(f_2))\Delta c_{2,V} - \na D_{V}(f_1)\cdot\na u
\end{align*}
and $\|\na(D_{V}(f_1)-D_{V}(f_2))\|_{L^2(Q_T)}\le C\|c_1-c_2\|_{L^2(0,T;H^1(\dom))}$ 
(see Lemma \ref{lem.ODEdiff}) that
$$
  \|\Delta u\|_{L^2(Q_T)} 
	\le C\big(\|\alpha_1-\alpha_2\|_{L^2(Q_T)} + \|\beta_1-\beta_2\|_{L^2(Q_T)}
	+ \|u\|_{L^2(0,T;H^1(\dom))}\big).
$$
We infer from \eqref{4.uH1} and related inequalities for $c_{1,j}-c_{2,j}$ that
$$
  \|\Delta(c_{1}-c_{2})\|_{L^2(Q_T)} 
	\le C\big(\|\alpha_1-\alpha_2\|_{L^2(Q_T)} + \|\beta_1-\beta_2\|_{L^2(Q_T)}\big),
$$
which concludes the proof.
\end{proof}

\begin{lemma}\label{lem.stabW214}
Let $c_i$ for $i=1,2$ be weak solutions to \eqref{1.c}--\eqref{1.cbic} with
the same initial data $(f^0,c^0)$ but possibly different coefficients
$\alpha_i$ and $\beta_i$. Then there exists $C>0$ such that for all
$t\in[0,T]$,
$$
  \|c_1-c_2\|_{W^{2,1,4}}
	\le C\big(\|\alpha_1-\alpha_2\|_{L^4(Q_T)} + \|\beta_1-\beta_2\|_{L^4(Q_T)}\big),
$$
recalling definition \eqref{4.W21q} of $W^{2,1,4}(Q_T)$.
\end{lemma}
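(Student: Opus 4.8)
The plan is to follow the same route as in the proof of Lemma \ref{lem.stab2}, replacing the $L^2$-based parabolic regularity by its $L^4$-based (maximal regularity) counterpart. As there, I would set $u := c_{1,V}-c_{2,V}$, which solves the linear problem \eqref{4.u} with vanishing initial datum $u(0)=0$ (the initial data of $c_1$ and $c_2$ coincide) and right-hand side $g$ given by \eqref{4.g}. By Lemma \ref{lem.1+delta} the coefficient $D_V(f_1)$ is Hölder continuous on $\overline\dom\times[0,T]$, which is more than enough regularity to invoke Theorem \ref{thm.strong} with $q=4$; this yields
$$
  \|u\|_{W^{2,1,4}} \le C\|g\|_{L^4(Q_T)},
$$
with $C$ depending on the data only through $D_V(f_1)$.

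The core of the argument is then to show $g\in L^4(Q_T)$ and to bound its norm. Expanding the divergence in \eqref{4.g},
$$
  \diver\big((D_V(f_1)-D_V(f_2))\na c_{2,V}\big)
  = \na(D_V(f_1)-D_V(f_2))\cdot\na c_{2,V} + (D_V(f_1)-D_V(f_2))\Delta c_{2,V},
$$
I would use the $W^{2,\infty}$ bound for $c_{2,V}$ from Lemma \ref{lem.W2infty} (so that $\na c_{2,V}$, $\Delta c_{2,V}$, and $c_{2,V}$ are bounded in $L^\infty(Q_T)$), the $L^\infty$ bound for $\alpha_{1,V}$, and Lemma \ref{lem.ODEdiff} with $k=1$, $p=4$ (which gives $\|(f_1-f_2)(t)\|_{W^{1,4}(\dom)}\le C\|c_1-c_2\|_{L^1(0,T;W^{1,4}(\dom))}$ uniformly in $t$). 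This leads to
$$
  \|g\|_{L^4(Q_T)}
  \le C\big(\|c_1-c_2\|_{L^1(0,T;W^{1,4}(\dom))} + \|c_1-c_2\|_{L^4(Q_T)}
  + \|\alpha_{1,V}-\alpha_{2,V}\|_{L^4(Q_T)}\big).
$$

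It remains to absorb the two lower-order norms of $c_1-c_2$ into the source differences. Here I would invoke the already established $L^2$-stability of Lemma \ref{lem.stab2}, combined with the Sobolev embeddings valid for the spatial dimension $d=3$, namely $H^2(\dom)\hookrightarrow W^{1,4}(\dom)$ and $H^1(\dom)\hookrightarrow L^4(\dom)$: from the former, $\|c_1-c_2\|_{L^1(0,T;W^{1,4}(\dom))}\le C\|c_1-c_2\|_{L^2(0,T;H^2(\dom))}$, and from the latter, $\|c_1-c_2\|_{L^4(Q_T)}\le C\|c_1-c_2\|_{L^\infty(0,T;H^1(\dom))}$. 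Both right-hand sides are controlled by $\|\alpha_1-\alpha_2\|_{L^2(Q_T)}+\|\beta_1-\beta_2\|_{L^2(Q_T)}$ via Lemma \ref{lem.stab2}, and on the bounded cylinder $Q_T$ these $L^2$ norms are dominated by the corresponding $L^4$ norms. Putting the pieces together yields the asserted bound for $u=c_{1,V}-c_{2,V}$, and repeating the argument for the components $c_{1,j}-c_{2,j}$ with $j=D,M,U$ completes the proof.

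The main obstacle I anticipate is verifying $g\in L^4(Q_T)$: the troublesome term is the divergence $\diver((D_V(f_1)-D_V(f_2))\na c_{2,V})$, whose control simultaneously requires the second-order $L^\infty$ bound on $c_{2,V}$ (Lemma \ref{lem.W2infty}) and the $W^{1,4}$-in-space estimate for the ODE differences $f_1-f_2$ (Lemma \ref{lem.ODEdiff}). A secondary point requiring care is the closure of the estimate: one must ensure that the lower-order terms generated by $g$ are bounded through the $L^2$-based Lemma \ref{lem.stab2} and the three-dimensional embeddings, rather than through $\|u\|_{W^{2,1,4}}$ itself, so that no circularity arises.
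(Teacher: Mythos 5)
Your proposal is correct and follows essentially the same route as the paper: the same reduction to the linear problem \eqref{4.u}, the same application of Theorem \ref{thm.strong} with $q=4$, the same bound on $\|g\|_{L^4(Q_T)}$ via Lemmas \ref{lem.W2infty} and \ref{lem.ODEdiff}, and the same closure of the lower-order terms through the embeddings $H^2(\dom)\hookrightarrow W^{1,4}(\dom)$, $H^1(\dom)\hookrightarrow L^4(\dom)$ and Lemma \ref{lem.stab2}. The only (immaterial) difference is that you route $\|c_1-c_2\|_{L^4(Q_T)}$ through the $L^\infty(0,T;H^1)$ norm while the paper uses $L^4(0,T;H^1)$.
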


\begin{proof}
Let $u=c_{1,V}-c_{2,V}$ be the solution to \eqref{4.u}.
Since $g\in L^4(Q_T)$ by Lemma \ref{lem.W2infty} (recall definition \eqref{4.g}
of $g$), Theorem \ref{thm.strong} in the Appendix
shows that $u\in W^{2,1,4}(Q_T)$ and, because of $\na c_{2,V}\in L^\infty(Q_T)$,
\begin{align*}
  \|c_{1,V}-c_{2,V}\|_{W^{2,1,4}} &\le C\|g\|_{L^4(Q_T)}
	\le C\big(\|c_{1}-c_{2}\|_{L^1(0,T;W^{1,4}(\dom))} 
	+ \|c_{1,V}-c_{2,V}\|_{L^4(Q_T)} \\
	&\phantom{xx}{}+ \|\alpha_{1,V}-\alpha_{2,V}\|_{L^4(Q_T)}\big).
\end{align*}
The first term on the right-hand side can be estimated by using the embedding
$H^{2}(\dom)\hookrightarrow W^{1,4}(\dom)$ and Lemma \ref{lem.stab2}:
\begin{align*}
  \|c_{1}-c_{2}\|_{L^1(0,T;W^{1,4}(\dom))} 
	&\le C\|c_{1}-c_{2}\|_{L^2(0,T;H^2(\dom))} \\
	&\le C\big(\|\alpha_1-\alpha_2\|_{L^2(Q_T)} + \|\beta_1-\beta_2\|_{L^2(Q_T)}\big).
\end{align*}
We deduce from the embedding $H^1(\dom)\hookrightarrow L^4(\dom)$ and
Lemma \ref{lem.stab2} again that
\begin{align*}
  \|c_{1,V}-c_{2,V}\|_{L^4(Q_T)} 
	&\le C\|c_{1,V}-c_{2,V}\|_{L^4(0,T;H^1(\dom))} \\
	&\le C\big(\|\alpha_1-\alpha_2\|_{L^2(Q_T)} + \|\beta_1-\beta_2\|_{L^2(Q_T)}\big).
\end{align*}
This gives
$$
  \|c_{1,V}-c_{2,V}\|_{W^{2,1,4}} 
	\le C\big(\|\alpha_1-\alpha_2\|_{L^4(Q_T)} + \|\beta_1-\beta_2\|_{L^2(Q_T)}\big).
$$
The estimates for $c_{1,j}-c_{2,j}$ ($j=D,M,U$) are similar.
\end{proof}

%%%%%%%%%%%%%%%%%%%%%%%%%%%%%%%%%%%%%%%%%%%%%%%%%%%%%%%%%%%%%%%%%%%%%%%%%%%%%%

\section{Solution of the stochastic differential equations}\label{sec.X}

Let $\alpha$, $\beta$ be given by \eqref{1.ab}. We first study the measurability
of $(c,f)$.

\begin{lemma}\label{lem.meas}
Let $f^0\in L^\infty(\Omega;C^{1+\delta}(\overline\dom))$ and
$c^0\in L^\infty(\Omega;W^{2,\infty}(\dom))$ such that $\na c^0_j\cdot\nu=0$
on $\pa\dom$, $j=V,D,M,U$. Furthermore, let $(c,f)$ be a pathwise solution
to \eqref{1.f}--\eqref{1.cbic}. Then $f$, $\na f$ are measurable as maps from
$(\Omega\times\dom\times[0,t],\F_t\times\B(\dom)\times\B([0,t]))$ to
$\B(\R^3)$, and $c$, $\na c$ are measurable as maps from
$(\Omega\times\dom\times[0,t],\F_t\times\B(\dom)\times\B([0,t]))$ 
to $\B(\R^4)$ for all $t\in [0,T]$.
In particular, these functions are progressively measurable.
\end{lemma}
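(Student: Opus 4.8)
The plan is to exploit that, for fixed initial data, the solution $(c,f)$ depends on $\omega$ only through the coefficients $\alpha$ and $\beta_D$, and that the latter depend measurably on the process $\widetilde X$. Since the weak solution $c$ is unique (taking identical coefficients in Lemma \ref{lem.stab1} forces $c_1=c_2$), the solution operator $\Psi:(\alpha,\beta_D)\mapsto c$ is well defined, and the stability estimate of Lemma \ref{lem.stabW214} shows that it is Lipschitz continuous from $L^4(Q_T)$-coefficients into $W^{2,1,4}(Q_T)$, which embeds continuously into $C^{1+\delta,(1+\delta)/2}(\overline\dom\times[0,T])$ by Lemma \ref{lem.1+delta}. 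Measurability of $c$ will then follow by composing the measurable coefficient map with the continuous operator $\Psi$, thereby avoiding any measurable selection in the Galerkin/Schauder construction.

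First I would establish that $\alpha$ and $\beta_D$, defined by \eqref{1.ab}, are jointly measurable and, more precisely, $\F_t\times\B(\dom)\times\B([0,t])$-measurable on $\Omega\times\dom\times[0,t]$. This is immediate from the progressive measurability of $\widetilde X$ and the Lipschitz continuity of the potentials $V_j^k$ (Assumption (A5)): each map $(\omega,x,s)\mapsto V_j^k(\widetilde X_1^k(\omega,s)-x)$ is the composition of a Carath\'eodory function with a progressively measurable process, and finite sums preserve this. Since $\alpha,\beta_D$ are moreover bounded, the induced map $\omega\mapsto(\alpha,\beta_D)(\omega)\in L^4(Q_T)$ is weakly, hence (by separability and Pettis's theorem) strongly measurable.

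Next I would view $\Psi$ as a continuous map into the separable Banach space $C^{1+\delta,(1+\delta)/2}(\overline\dom\times[0,T])$, so that $\omega\mapsto c(\omega)$ is a strongly measurable Banach-valued random variable. As the target consists of continuous functions on the separable metric space $\overline\dom\times[0,T]$ and evaluation is continuous, strong measurability upgrades to joint measurability of $(\omega,x,t)\mapsto c(\omega,x,t)$; applying the same reasoning to the continuous gradient $\na c\in C^{\delta,\delta/2}$ yields joint measurability of $\na c$. The volume fractions are then treated via the explicit formula \eqref{1.fBF}: since $\int_0^t c_M(\cdot,s)\,\dd s$ is measurable and $z\mapsto\exp(-z)$ is continuous, $f_B$ and $f_F$ are measurable, and differentiating \eqref{1.fBF} (using $c\in C^{1+\delta}$ and Lemma \ref{lem.ODEdiff}) shows that $\na f$ is measurable as well.

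Finally, for progressive measurability I would rerun the argument with $[0,T]$ replaced by $[0,t]$: the coefficients restricted to $\Omega\times\dom\times[0,t]$ are $\F_t$-measurable, and the parabolic problem posed on $[0,t]$ is well-posed with solution equal to the restriction of $c$, by uniqueness. Hence the restrictions of $c$ and $\na c$ to $[0,t]$ are $\F_t\times\B(\dom)\times\B([0,t])$-measurable, which is exactly the claimed progressive measurability. The main obstacle is the passage from abstract Banach-valued measurability of $\omega\mapsto c(\omega)$ to genuine joint measurability in $(\omega,x,t)$ together with the correct filtration; this is where one must combine the separability of the solution space, the continuity of $c$ in $(x,t)$, and the causal structure (restriction to $[0,t]$) of the parabolic problem, rather than attempt a direct measurable selection in the Schauder step, which would be considerably more delicate.
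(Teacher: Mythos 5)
Your overall strategy is the same as the paper's: show that the coefficients $(\alpha,\beta_D)$ are adapted and jointly measurable as functions of $\widetilde X$, use uniqueness plus a stability estimate to make the solution operator continuous with respect to the coefficients, and then obtain joint measurability of $c$ and $\na c$ by composing with the continuous evaluation map $(c,x)\mapsto c(x)$; progressive measurability comes from the causal structure. The paper implements the continuity step by approximating the coefficients by simple adapted processes and passing to the limit via the stability lemma and a compactness argument, which is the same idea in different clothing, so on the level of architecture there is nothing to object to.

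There is, however, one concrete false step: the claim that $W^{2,1,4}(Q_T)$ embeds continuously into $C^{1+\delta,(1+\delta)/2}(\overline\dom\times[0,T])$ ``by Lemma \ref{lem.1+delta}''. Lemma \ref{lem.1+delta} is an a priori regularity bound for the particular solutions, not an embedding theorem, and the anisotropic Sobolev embedding of $W^{2,1,q}(Q_T)$ into a space with H\"older continuous spatial gradient requires $q>d+2=5$ in three space dimensions, so $q=4$ is not enough. Since the joint measurability of $\na c$ hinges on evaluating the gradient, this step as written does not deliver the conclusion. The repair is exactly the interpolation the paper uses: by Lemmas \ref{lem.1+delta} and \ref{lem.W2infty} the range of the solution operator is bounded in $C^{1+\delta}(\overline\dom)$, indeed in $W^{2,\infty}(\dom)$, uniformly in time, and bounded sets of $W^{2,\infty}(\dom)$ are relatively compact in $C^1(\overline\dom)$; combining this compactness with the continuity of the solution operator in the weaker topology furnished by Lemma \ref{lem.stab1} (or Lemma \ref{lem.stabW214}) and uniqueness of limits gives continuity, hence measurability, of $\omega\mapsto c(\omega,\cdot,t)$ with values in $C^1(\overline\dom)$, which is enough for the evaluation argument for both $c$ and $\na c$. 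With that substitution your proof goes through.
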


\begin{proof}
Since $f_j$ can be represented as a function depending on the time integral of $c$,
it is sufficient to show the measurability of $c_j$. The continuity of
the potentials defining $\alpha_j$ and $\beta_j$ in \eqref{1.ab} shows that 
$\alpha_j$ and $\beta_j$ are processes with c\`adl\`ag paths almost surely. 
By approximating the initial data $c^0$, $f^0$
and the processes $\alpha_j$, $\beta_j$ by suitable simple processes, 
which are adapted to the filtration by construction,
we can obtain the $\F_t$-measurability of $c_j(t):\Omega\to C^1(\overline\dom)$ 
for $t\in[0,T]$. We conclude from Lemma \ref{lem.stab1} and the compactness of
$W^{2,\infty}(\dom)\subset C^{1+\delta}(\overline\dom)$ in $C^1(\overline\dom)$
the measurability of $c_j$ as the limit of measurable functions.
For details of this construction, we refer to \cite[Section 3.3]{HSZS18}.
It is known that c\`adl\`ag processes, which are adapted to the filtration, are also
progressively measurable \cite[Prop.~1.13]{KaSh91}. If the filtration is complete,
this holds also true for processes having c\`adl\`ag paths almost surely.
The estimate $\|c_j(t)-c_j(s)\|_{C^1(\overline\dom)}\le C|t-s|^\delta$,
which follows from Lemma \ref{lem.hoelder}, implies that $c_j(t)$ has
almost surely continuous paths and consequently, $c_j(t)$ is progressively
measurable. To be precise, this yields the measurability of $c_j$ as a function
from $(\Omega\times[0,t],\F_t\times\B([0,t]))$ to 
$(C^1(\overline\dom),\B(C^1(\overline\dom)))$ for every $t\in[0,T]$.

The function $(c,x)\mapsto c(x)$, $C^1(\overline\dom)\times\overline\dom\to\R^4$,
is continuous and hence, it is measurable as a mapping from 
$(C^1(\overline\dom),\B(C^1(\overline\dom)))$ to $(\R^4,\B(\R^4))$. 
Now, we can write $c(\omega,x,t)$ as the concatenation 
$$
  (\omega,x,t)\mapsto (c(\omega,x,t),x)\mapsto c(\omega,x,t),\quad
  \Omega\times\overline\dom\times[0,T]\to C^1(\overline\dom)\times
	\overline\dom\to\R^4, 
$$
of measurable functions, which yields the measurability of $c$.
In a similar way, we can prove the measurability of $\pa c_j/\pa x_i$
for $i=1,2,3$ by considering the continuous mapping $(c,x)\mapsto 
(\pa c/\pa x_i)(x)$.
\end{proof}

\begin{lemma}\label{lem.exX}
Let Assumptions (A1)--(A5) hold.
Then there exists a unique, progressively measurable solution $(X_i^k)$ to 
\eqref{1.X} such that $X_i^k(t)\in\overline\dom$ a.s.\ for every $t\in[0,T]$.
\end{lemma}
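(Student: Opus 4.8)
The plan is to put the SDE system \eqref{1.X} into the framework of the existence theorem \cite[Theorem 3.1.1]{LiRo15} for finite-dimensional SDEs, and then to confine the solution to $\overline\dom$ by exploiting the vanishing of the coefficients on $\pa\dom$. Since $g_i[c,f](\cdot,t)$ and $\sigma_i$ are a priori defined only on $\overline\dom$, I would first extend them to all of $\R^3$ by setting them equal to zero on $\R^3\setminus\dom$. By Assumptions (A2) and (A3) both coefficients vanish on $\pa\dom$, so these extensions are continuous; moreover, a Lipschitz function on $\overline\dom$ that vanishes on a $C^3$ boundary extends by zero to a globally Lipschitz function, so the spatial regularity is preserved (with a constant depending only on the geometry of $\dom$).

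Next I would verify the hypotheses of \cite[Theorem 3.1.1]{LiRo15}. Progressive measurability of $x\mapsto g_i[c,f](x,t)$ and of $\sigma_i$ in $(\omega,t)$ follows from Lemma \ref{lem.meas}, which provides the progressive measurability of $c$, $f$, $\na c$, $\na f$, together with Assumptions (A2) and (A3). For the Lipschitz condition in $x$, I would combine the regularity $c\in L^\infty(0,T;W^{2,\infty}(\dom))$ from Lemma \ref{lem.W2infty} and the induced $W^{2,\infty}$ regularity of $f$ from the explicit representation \eqref{1.fBF} with the second estimate in Assumption (A4); this yields that $x\mapsto g_i[c,f](x,t)$ is Lipschitz with a constant bounded uniformly in $t$ and a.s., while $\sigma_i$ is Lipschitz in $x$ by (A2). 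The linear growth condition is immediate, since $g_i$ and $\sigma_i$ are continuous on the compact set $\overline\dom$, hence bounded, and vanish outside. Theorem \cite[Theorem 3.1.1]{LiRo15} then yields a unique strong solution $X_i^k$ in the sense of \eqref{1.intX}; being a.s.\ continuous and $(\F_t)$-adapted, it is progressively measurable.

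It remains to show $X_i^k(t)\in\overline\dom$ a.s.\ for all $t$. The crucial observation is that, by the vanishing of $g_i$ and $\sigma_i$ on $\pa\dom$, every boundary point is an equilibrium: for $x_0\in\pa\dom$ the constant process $t\mapsto x_0$ solves \eqref{1.X}, because $g_i[c,f](x_0,t)=0$ for all $t$ and $\sigma_i(x_0)=0$. I would introduce the first exit time $\tau:=\inf\{t\ge 0:X_i^k(t)\notin\dom\}$. For $t<\tau$ one has $X_i^k(t)\in\dom$, and by path continuity $X_i^k(\tau)\in\pa\dom$ on $\{\tau<\infty\}$. Comparing, on $[\tau,\infty)$, the process $X_i^k$ with the constant solution $t\mapsto X_i^k(\tau)$ of \eqref{1.X} started from $X_i^k(\tau)\in\pa\dom$, pathwise uniqueness forces $X_i^k(t)=X_i^k(\tau)\in\pa\dom$ for all $t\ge\tau$. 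In either case $X_i^k(t)\in\overline\dom$ a.s.

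I expect the main obstacle to be the verification of the global Lipschitz condition in $x$ for the composite drift $g_i[c,f]$, as this requires stitching together the parabolic regularity (Lemmas \ref{lem.1+delta} and \ref{lem.W2infty}), the measurability statement (Lemma \ref{lem.meas}), and the structural Assumption (A4), while checking that the resulting Lipschitz constant is controlled uniformly in $t$ and $\omega$. A secondary but delicate point is the confinement argument: because the diffusion $\sigma_i$ is degenerate on $\pa\dom$, invariance of $\overline\dom$ cannot be deduced from non-degeneracy and instead relies essentially on pathwise uniqueness together with the fixed-point property of the boundary.
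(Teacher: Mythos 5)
Your proposal is correct, and the existence part follows the paper's route almost verbatim: extend $g_i$ and $\sigma_i$ by zero outside $\dom$ (valid because both vanish on $\pa\dom$ by (A2)--(A3), so the zero extension of a Lipschitz function remains Lipschitz), obtain progressive measurability of the drift from Lemma \ref{lem.meas}, get the uniform-in-$t$ Lipschitz constant from (A4) combined with the $W^{2,\infty}$ bounds of Lemma \ref{lem.W2infty}, and invoke a standard strong existence theorem (the paper cites \cite[Theorem 32.3]{Kal21} at this point rather than \cite[Theorem 3.1.1]{LiRo15}, but this is immaterial). Where you genuinely diverge is the confinement step. The paper applies It\^o's formula to an arbitrary smooth $\phi$ with $\operatorname{supp}\phi\subset\dom^c$, observes that every term in $\dd\phi(X_i^k)$ vanishes pointwise (either $X_i^k\in\dom$, where $\na\phi$ and $\mathrm{D}^2\phi$ vanish, or $X_i^k\in\dom^c$, where $g_i$ and $\sigma_i$ vanish), concludes $\phi(X_i^k(t))=0$, and lets $\phi$ range over all such test functions. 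You instead use the first exit time $\tau$ together with the observation that boundary points are equilibria of the extended SDE, so that pathwise uniqueness (restarted at the stopping time $\tau$ from the $\F_\tau$-measurable datum $X_i^k(\tau)\in\pa\dom$) freezes the path on $[\tau,T]$. Both arguments rest on the same structural fact that the coefficients vanish on $\pa\dom$; yours additionally yields the stronger qualitative conclusion that a path which reaches $\pa\dom$ stays put forever, while the paper's test-function argument avoids having to justify uniqueness from a random initial time (a standard but nontrivial point that you should at least remark on, e.g.\ by a direct Gronwall estimate on $\E|X_i^k(t)-X_i^k(\tau)|^2$ for $t\ge\tau$, using $g_i[c,f](X_i^k(\tau),s)=0$ and $\sigma_i(X_i^k(\tau))=0$).
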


\begin{proof}
We extend the coefficients $g_i$ and $\sigma_i$ by setting them to zero outside of
$\dom$. The extended coefficients are still uniformly Lipschitz continuous.
We infer from Lemma \ref{lem.meas} that $g_i$ is progressively measurable.
Thus, by \cite[Theorem 32.3]{Kal21}, there exists a strong solution to \eqref{1.X}.

It remains to show that $X_i^k(t)\in\overline\dom$ a.s. 
Let $\phi$ be a smooth test function 
satisfying $\operatorname{supp}\phi\subset\dom^c$.
We obtain from It\^o's lemma that
\begin{equation}\label{4.dphi}
  \dd\phi(X_i^k) = \na\phi(X_i^k)\cdot g_i[c,f](X_i^k,t)\dd t
	+ \frac12\sigma_i(X_i^k)^2\Delta\phi(X_i^k)\dd t 
	+ \na\phi(X_i^k)\cdot\sigma_i(X_i^k)\dd W_i^k.
\end{equation}
If $X_i^k(t)\in\dom$, we have $\phi(X_i^k)=0$. If $X_i^k(t)\in\dom^c$ then
$g_i[c,f](X_i^k(t),t)=0$ by Assumption (A3) and $\sigma_i(X_i^k(t))=0$ 
by Assumption (A2).
Equation \eqref{4.dphi} then shows that $\phi(X_i^k(t))=\phi(X_i^0)=0$ and
$X_i^k(t)\in\overline{(\operatorname{supp}\phi)^c}$ a.s. Since $\phi$ with
$\operatorname{supp}\phi\subset\dom^c$ was arbitrary, we conclude that
$X_i^k(t)\in\overline\dom$ a.s.\ for $t\in(0,T)$.
\end{proof}

%%%%%%%%%%%%%%%%%%%%%%%%%%%%%%%%%%%%%%%%%%%%%%%%%%%%%%%%%%%%%%%%%%%%%%%%%%%%%%

\section{Proof of Theorem \ref{thm.ex}}\label{sec.ex}

The fixed-point operator is defined as a function that maps 
$\widetilde{X}\mapsto(\alpha,\beta_D)\mapsto (c,f)\mapsto X$, where
$(\alpha,\beta_D)$ are defined in \eqref{1.ab} with $X$ replaced by $\widetilde{X}$.
To define its domain, we need some preparations.
For given $R>0$, we introduce the following space:
\begin{align}\label{def.YR}
  Y_R(0,T;\dom) &:=\big\{X\in C^{1/2}([0,T];L^4(\Omega)):
	\|X\|_{C^{1/2}([0,T];L^4(\Omega))}\le R,\\ 
	&\phantom{xxm} X(t)\mbox{ is }\F_t\mbox{-measurable},\
	X(t)\in\overline\dom\mbox{ a.s. for all }t\in[0,T]\big\}, \nonumber
\end{align}
equipped with the standard norm of $C^{0}([0,T];L^4(\Omega))$.

\begin{lemma}
The space $Y_R(0,T;\dom)$ is complete. Furthermore, any $X\in Y_R(0,T;\dom)$
has a progressively measurable modification with almost surely H\"older 
continuous paths.
\end{lemma}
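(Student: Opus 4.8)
The plan is to prove the two assertions separately. For completeness, I would use that $C^0([0,T];L^4(\Omega))$ is itself a Banach space, so it suffices to show that $Y_R(0,T;\dom)$ is a \emph{closed} subset of it. Given a sequence $(X_n)\subset Y_R(0,T;\dom)$ that is Cauchy in the $C^0([0,T];L^4(\Omega))$ norm, completeness of the ambient space yields a limit $X\in C^0([0,T];L^4(\Omega))$; in particular $X_n(t)\to X(t)$ in $L^4(\Omega)$ for every $t\in[0,T]$. It then remains to verify that $X$ inherits the three defining properties of $Y_R(0,T;\dom)$.

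The H\"older bound passes to the limit by pointwise convergence: from $\|X_n(t)-X_n(s)\|_{L^4(\Omega)}\le R|t-s|^{1/2}$ together with $X_n(t)\to X(t)$ and $X_n(s)\to X(s)$ in $L^4(\Omega)$, I obtain $\|X(t)-X(s)\|_{L^4(\Omega)}\le R|t-s|^{1/2}$, and likewise the supremum part of the norm is controlled, whence $\|X\|_{C^{1/2}([0,T];L^4(\Omega))}\le R$. For the measurability and range constraints I would fix $t$ and pass to a subsequence along which $X_n(t)\to X(t)$ almost surely. Since each $X_n(t)$ is $\F_t$-measurable and the filtration is complete, the almost-sure limit $X(t)$ is again $\F_t$-measurable; and since $\overline\dom$ is closed with $X_n(t)\in\overline\dom$ a.s., the limit satisfies $X(t)\in\overline\dom$ a.s. Hence $X\in Y_R(0,T;\dom)$, and the space is complete.

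For the second assertion, the point is that membership in $C^{1/2}([0,T];L^4(\Omega))$ with norm bounded by $R$ encodes the moment estimate $\E|X(t)-X(s)|^4\le R^4|t-s|^2$ for all $s,t\in[0,T]$. This is exactly the hypothesis of Kolmogorov's continuity theorem with exponent $\alpha=4$ and $\E|X(t)-X(s)|^4\le C|t-s|^{1+\beta}$ for $\beta=1$, which produces a modification $\widetilde X$ of $X$ whose paths are almost surely H\"older continuous of any exponent strictly less than $\beta/\alpha=1/4$. Because $\widetilde X(t)=X(t)$ a.s. for each $t$ and the filtration is complete, $\widetilde X$ remains $\F_t$-adapted. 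Finally, an adapted process with almost surely continuous paths is progressively measurable (as already invoked for the concentrations via \cite[Prop.~1.13]{KaSh91} together with completeness of the filtration), which yields the desired progressively measurable modification with a.s. H\"older continuous paths.

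I expect the only genuine subtlety to lie in the completeness argument, where the topology is the weaker $C^0([0,T];L^4(\Omega))$ norm while the constraint set is defined through the stronger $C^{1/2}$ norm. The resolution is that the H\"older seminorm is lower semicontinuous under pointwise $L^4(\Omega)$ convergence, so the bound $\le R$ survives in the limit; the measurability and range conditions are then preserved by passing to almost-surely convergent subsequences. The modification statement is afterwards a direct application of Kolmogorov's theorem combined with the standard fact that adapted, path-continuous processes are progressively measurable.
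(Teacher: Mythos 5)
Your proposal is correct and follows essentially the same route as the paper: closedness of $Y_R(0,T;\dom)$ in the Banach space $C^0([0,T];L^4(\Omega))$ via lower semicontinuity of the H\"older bound and a.s.\ convergent subsequences for the measurability and range constraints, followed by the Kolmogorov continuity criterion (with $\E|X(t)-X(s)|^4\le R^4|t-s|^2$) and the standard fact that adapted, a.s.\ path-continuous processes are progressively measurable. Your write-up is in fact slightly more explicit than the paper's about the exponents in Kolmogorov's theorem and the role of the completeness of the filtration.
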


\begin{proof}
Let $(X_n)$ be a Cauchy sequence in $Y_R(0,T;\dom)$ and let $\eps>0$. Then
there exists $N\in\N$ such that for all $n,m\ge N$,
$$
  \|X_n(t)-X_m(t)\|_{L^4(\Omega)} \le \|X_n-X_m\|_{C^0([0,T];L^4(\Omega))} < \eps.
$$
For any $t\in[0,T]$, $(X_n(t))$ is a Cauchy sequence in $L^4(\Omega)$. Consequently,
$X_n(t)\to X(t)$ in $L^4(\Omega)$, where $X(t)\in L^4(\Omega)$ is $\F_t$-measurable.
Furthermore, there exists a subsequence of $(X_n(t))$ (not relabeled) that
converges pointwise to $X(t)$ a.s., proving that $X(t)\in\overline\dom$ a.s.
The definition of the H\"older norm implies that
$\|X_n(t)-X_n(s)\|_{L^4(\Omega)}\le R|t-s|^{1/2}$ for all $s,t\in[0,T]$.
This gives in the limit $n\to\infty$ that 
$\|X(t)-X(s)\|_{L^4(\Omega)}\le R|t-s|^{1/2}$ and consequently
$X\in C^{1/2}(0,T;\dom)$. We conclude that $X\in Y_R(0,T;\dom)$.
By the Kolmogorov continuity criterium, (a modification of) $X$ has almost 
surely H\"older continuous paths. As $X(t)$ is an adapted process with respect 
to the filtration $\F_t$, $X$ is progressively measurable.
\end{proof}

\begin{lemma}%\label{lem.R0}
Let $\widetilde{X}\in Y_R(0,T;\dom)$ for some $R>0$,
and let $(c,f)$ be a solution to \eqref{1.f}--\eqref{1.cbic}, where
$\alpha$, $\beta$ are given by \eqref{1.ab} with $X$ replaced by
$\widetilde{X}$. Then there exists $R_0>0$ not depending
on $R$ such that the solution $X$ to \eqref{1.X} satisfies $X\in Y_{R_0}(0,T;\dom)$.
\end{lemma}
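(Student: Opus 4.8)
The plan is to verify the three properties defining $Y_{R_0}(0,T;\dom)$ for the solution $X$ to \eqref{1.X}, with a radius $R_0$ that is independent of $R$. Lemma \ref{lem.exX} already supplies the progressive measurability of $X$ and the inclusion $X_i^k(t)\in\overline\dom$ a.s.\ for every $t\in[0,T]$, so the only genuine task is to bound $\|X\|_{C^{1/2}([0,T];L^4(\Omega))}$ by a constant not depending on $R$.

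The decisive point, which severs the dependence on $R$, concerns the coefficients $\alpha_j$ and $\beta_D$ in \eqref{1.ab}. Since $\widetilde{X}_i^k(t)\in\overline\dom$ a.s.\ and $x\in\overline\dom$, the arguments $\widetilde{X}_i^k(t)-x$ of the potentials $V_j^k$ stay in the compact set $\overline\dom-\overline\dom$, on which the functions $V_j^k\in C^{0,1}(\R^3)$ of Assumption (A5) are bounded. I therefore obtain $\|\alpha_j\|_{L^\infty(Q_T)}+\|\beta_D\|_{L^\infty(Q_T)}\le\Lambda$, where $\Lambda$ depends only on the potentials and on $\dom$, but \emph{not} on the H\"older norm of $\widetilde{X}$, hence not on $R$. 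Feeding this $\Lambda$ into the $L^\infty$ estimate, the H\"older estimate of Lemma \ref{lem.hoelder}, and the $W^{2,\infty}$ estimate of Lemma \ref{lem.W2infty} secures bounds on $(c,f)$ in the relevant norms that are again independent of $R$. Combining the established regularity of $(c,f)$ with the second Lipschitz estimate of Assumption (A4) and the boundary condition $g_i[c,f]=0$ on $\pa\dom$ from Assumption (A3)---comparing $g_i[c,f](x,t)$ to its value at the nearest boundary point---then yields a uniform bound $\|g_i[c,f]\|_{L^\infty(\overline\dom\times[0,T])}\le K_g$ with $K_g$ independent of $R$, while the at most linear growth of $\sigma_i$ in Assumption (A2) together with the boundedness of $\dom$ gives $\|\sigma_i\|_{L^\infty(\overline\dom)}\le K_\sigma$.

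With these $R$-independent bounds the H\"older estimate becomes routine. The supremum part is immediate: because $X_i^k(t)\in\overline\dom$ a.s., one has $\sup_{t\in[0,T]}\|X_i^k(t)\|_{L^4(\Omega)}\le\sup_{x\in\overline\dom}|x|$. For the seminorm I would write, for $0\le s<t\le T$,
$$
  X_i^k(t)-X_i^k(s) = \int_s^t g_i[c,f](X_i^k(r),r)\,\dd r
	+ \int_s^t\sigma_i(X_i^k(r))\,\dd W_i^k(r),
$$
estimate the drift by H\"older's inequality through $|\int_s^t g_i\,\dd r|^4\le K_g^4(t-s)^4\le K_g^4 T^2(t-s)^2$, and control the stochastic integral by the Burkholder--Davis--Gundy inequality via $\E|\int_s^t\sigma_i\,\dd W_i^k|^4\le C\,\E(\int_s^t|\sigma_i|^2\,\dd r)^2\le CK_\sigma^4(t-s)^2$. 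Adding the two contributions gives $\E|X_i^k(t)-X_i^k(s)|^4\le C(K_g,K_\sigma,T)(t-s)^2$, and taking fourth roots produces $\|X_i^k(t)-X_i^k(s)\|_{L^4(\Omega)}\le C'(t-s)^{1/2}$ with $C'$ independent of $R$. Setting $R_0:=\sup_{x\in\overline\dom}|x|+C'$ then finishes the argument.

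The step I expect to be the main obstacle is not the SDE moment estimate, which is standard, but the bookkeeping needed to confirm that every constant feeding into $K_g$ through the regularity of $(c,f)$ ultimately depends only on the $R$-independent quantity $\Lambda$. Once the uniform boundedness of $\alpha_j$ and $\beta_D$ over the fixed compact set $\overline\dom-\overline\dom$ is recognized, the $R$-independence of $R_0$ propagates through the chain of regularity lemmas of Section \ref{sec.c} without further difficulty.
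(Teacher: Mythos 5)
Your proposal is correct and follows essentially the same route as the paper: establish $R$-independent bounds on $(c,f)$ via the regularity lemmas (the paper invokes Lemma \ref{lem.W2infty} directly, while you additionally spell out why $\Lambda$ is $R$-independent and how the bound on $g_i$ follows from Assumption (A4) and the boundary condition), then derive $\E|X_i^k(t)-X_i^k(s)|^4\le C|t-s|^2$ by H\"older's inequality for the drift and the Burkholder--Davis--Gundy inequality for the stochastic integral. Your bookkeeping of the constants and the inclusion of the $C^0$ part of the norm in $R_0$ are, if anything, slightly more careful than the paper's own proof.
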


\begin{proof}
According to Lemma \ref{lem.W2infty},
$c$ is bounded in the $L^\infty(0,T;W^{2,\infty}(\dom))$ norm
by a constant that is independent of $R$. Then, by Lemma \ref{lem.exX},
there exists a unique solution $X$ to \eqref{1.f}. Since
$X_i^k(t)\in\overline\dom$ a.s., $c$, $\na c$, $f$, $\na f$ 
are bounded uniformly in $R$,
i.e., there exists $K=K(c^0,f^0)>0$, which is independent of $R$, such that
$|g_i[c,f](X_i^k(t),t)|\le K$ a.s. Thus, for $s,t\in[0,T]$,
using the Burkholder--Davis--Gundy inequality,
\begin{align*}
  \E|X_i^k(t)-X_i^k(s)|^4 
	&\le C(K)|t-s|^4 + C\E\bigg(\int_s^t\sigma(X_i^k(s))\dd W_i^k(s)\bigg)^4 \\
	&\le C(K)|t-s|^4 + C\E\bigg(\int_s^t\sigma(X_i^k(s))^2\dd s\bigg)^2 \\
	&\le C(K)\big(|t-s|^2 + \|\sigma\|_{L^\infty(\dom)}\big)|t-s|^2
	\le C(K,T,\sigma,\dom)|t-s|^2.
\end{align*}
The lemma follows after choosing $R_0:=C(K,T,\sigma,\dom)^{1/4}$.
\end{proof}

The previous lemma shows that the fixed-point operator 
$\Phi:Y_{R_0}(0,T;\dom)\to Y_{R_0}(0,T;\dom)$, $\widetilde{X}\mapsto X$, 
is well defined. We need to verify that $\Phi$ is a contraction. 
We first prove an auxiliary result.

\begin{lemma}\label{lem.diffX}
Let $(c,f)$ and $(c',f')$ be progressively measurable solutions
to \eqref{1.f}--\eqref{1.cbic}, where $\alpha$, $\beta$ are given by \eqref{1.ab}
with $X$ replaced by $\widetilde{X}$, $\widetilde{X}'\in Y_R(0,T;\dom)$ 
for some $R>0$, respectively.
Then the associated solutions $X$ and $X'$ to \eqref{1.X} satisfy
$$
  \E|X(t)-X'(t)|^4 \le Ct\int_0^t\E\|c(s)-c'(s)\|_{C^1(\overline\dom)}^4\dd s,
$$
where the constant $C>0$ does not depend on $R$, $(c,f)$, or $(c',f')$. 
\end{lemma}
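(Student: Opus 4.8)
The plan is to compare the two SDE systems pathwise, exploiting that $X$ and $X'$ are driven by the \emph{same} Wiener processes $W_i^k$ and start from the \emph{same} initial data $X_i^0$, while only the drift coefficient differs (the diffusion $\sigma_i$ does not depend on $(c,f)$). Fixing $i,k$ and subtracting the integral representations \eqref{1.intX}, the difference reads
\[
  X_i^k(t)-(X')_i^k(t) = \int_0^t\big(g_i[c,f](X_i^k(s),s)-g_i[c',f']((X')_i^k(s),s)\big)\dd s
  + \int_0^t\big(\sigma_i(X_i^k(s))-\sigma_i((X')_i^k(s))\big)\dd W_i^k(s).
\]
Applying $|a+b|^4\le 8(|a|^4+|b|^4)$ and taking expectations, it suffices to bound the drift and the stochastic contributions separately and then to sum over $i$ and $k$.

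For the drift term I would insert $\pm g_i[c,f]((X')_i^k(s),s)$ and invoke the two Lipschitz estimates of Assumption~(A4). The spatial estimate controls $|g_i[c,f](X_i^k(s),s)-g_i[c,f]((X')_i^k(s),s)|$ by $C|X_i^k(s)-(X')_i^k(s)|$, where the prefactor $1+\|c\|_{L^\infty(0,T;W^{2,\infty}(\dom))}+\|f\|_{L^\infty(0,T;W^{2,\infty}(\dom))}$ is bounded by a constant \emph{independent of $R$} thanks to Lemma~\ref{lem.W2infty} and the explicit representation \eqref{1.fBF} of $f$. The coefficient estimate controls $|g_i[c,f]((X')_i^k(s),s)-g_i[c',f']((X')_i^k(s),s)|$ by $C(\|c(s)-c'(s)\|_{C^1(\overline\dom)}+\|f(s)-f'(s)\|_{C^1(\overline\dom)})$, again with an $R$-independent prefactor. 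Since $(c,f)$ and $(c',f')$ share the same initial datum $f^0$, Lemma~\ref{lem.ODEdiff} with $k=1$ and $p=\infty$ (the $C^1$ and $W^{1,\infty}$ norms coinciding for these functions) gives $\|f(s)-f'(s)\|_{C^1(\overline\dom)}\le C\int_0^s\|c-c'\|_{C^1(\overline\dom)}\dd\tau$, the boundary term $\|f^0-(f')^0\|$ vanishing. Collecting these bounds and using Jensen's inequality $|\int_0^t h\,\dd s|^4\le t^3\int_0^t|h|^4\dd s$, the drift contribution is estimated by $Ct^3\int_0^t\E|X_i^k(s)-(X')_i^k(s)|^4\dd s + Ct^3\int_0^t\E\|c(s)-c'(s)\|_{C^1(\overline\dom)}^4\dd s$, the integrated $f$-term being absorbed into the last integral via $(\int_0^s\|c-c'\|\dd\tau)^4\le T^3\int_0^t\|c-c'\|^4\dd\tau$.

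For the stochastic term I would apply the Burkholder--Davis--Gundy inequality together with the Lipschitz continuity of $\sigma_i$ from Assumption~(A2), yielding
\[
  \E\Big|\int_0^t\big(\sigma_i(X_i^k(s))-\sigma_i((X')_i^k(s))\big)\dd W_i^k(s)\Big|^4
  \le C\,\E\Big(\int_0^t|X_i^k(s)-(X')_i^k(s)|^2\dd s\Big)^2
  \le Ct\int_0^t\E|X_i^k(s)-(X')_i^k(s)|^4\dd s.
\]
Adding the two contributions, summing over $i,k$, and writing $a(t):=\E|X(t)-X'(t)|^4$, I obtain an integral inequality of the form $a(t)\le C\int_0^t a(s)\dd s + Ct\int_0^t\E\|c(s)-c'(s)\|_{C^1(\overline\dom)}^4\dd s$ (using $t\le T$ to convert powers of $t$). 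Gronwall's lemma removes the first term at the cost of a factor $e^{CT}$, absorbed into $C$, and preserves the prefactor $t$, giving exactly the claimed estimate. The main technical point is the $R$-independence of the constant: it hinges on the fact, established in Section~\ref{sec.c}, that the $W^{2,\infty}$ bound on $c$ (and hence the $C^1$ bound on $f$) is uniform in $R$, so that the prefactors appearing in Assumption~(A4) do not degenerate along the fixed-point iteration.
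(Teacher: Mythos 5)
Your proposal is correct and follows essentially the same route as the paper's proof: subtract the It\^o integral representations, split the drift difference into a coefficient part and a spatial part handled by the two Lipschitz estimates of Assumption~(A4), control the martingale part by the Burkholder--Davis--Gundy inequality, and conclude with H\"older's and Gronwall's inequalities. Your explicit use of Lemma~\ref{lem.ODEdiff} to absorb the $\|f(s)-f'(s)\|_{C^1(\overline\dom)}$ term into the $c$-integral, and your remark on the $R$-independence of the constants via Lemma~\ref{lem.W2infty}, merely make explicit steps the paper leaves implicit.
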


\begin{proof}
The It\^o integral representation of $X(t)-X'(t)$ gives
\begin{align}\label{4.est1}
  \E|X_i^k&(t)-(X')_i^k(t)|^4 
	\le C\E\bigg(\int_0^t\big(g_i[c,f](X_i^k(s),s)-g_i[c',f']((X')_i^k(s),s)\big)\dd s
	\bigg)^4 \\
	&\phantom{xx}{}
	+ C\E\bigg(\int_0^t\big(\sigma(X_i^k(s))-\sigma((X')_i^k(s))\big)\dd W_i^k(s)\bigg)^4 
	=: I_1 + I_2. \nonumber
\end{align}
It follows from Assumption (A4) that
\begin{align*}
  I_1 &\le C\E\bigg|\int_0^t\big(g_i[c,f](X_i^k(s),s)-g_i[c',f'](X_i^k(s),s)\big)\dd s
	\bigg|^4 \\
	&\phantom{xx}{}
	+ \E\bigg|\int_0^t\big(g_i[c',f'](X_i^k(s),s)-g_i[c',f']((X')_i^k(s),s)\big)\dd s
	\bigg|^4 \\
	&\le L_1^4\E\big(1+\|c\|_{L^\infty(0,T;C^1(\overline\dom))}\big)^4
	\bigg(\int_0^t\|c(s)-c'(s)\|_{C^1(\overline\dom)}\dd s\bigg)^4 \\
	&\phantom{xx}{}
	+ L_2^4\E\big(1+\|c'\|_{L^\infty(0,T;W^{2,\infty}(\dom))}\big)^4
	\bigg(\int_0^t|X(s)-X'(s)|\dd s\bigg)^4.
\end{align*}
Furthermore, by the Burkholder--Davis--Gundy inequality and the Lipschitz
continuity of $\sigma$,
$$
  I_2 \le C	\E\bigg(\int_0^t\big((\sigma(X_i^k(s))-\sigma((X')_i^k(s))\big)^2\dd s
	\bigg)^2 \le C\E\bigg(\int_0^t|X(s)-X'(s)|^2\dd s\bigg)^2.
$$
We insert these estimates into \eqref{4.est1} and use H\"older's inequality:
\begin{align*}
  \E|X(t)-X'(t)|^4 
	&\le Ct^3\E\int_0^t\|c(s)-c'(s)\|_{C^1(\overline\dom)}^4\dd s \\
	&\phantom{xx}{}+ Ct^3\E\int_0^t|X(s)-X'(s)|^4\dd s
	+ Ct\E\int_0^t|X(s)-X'(s)|^4\dd s.
\end{align*}
Then Gronwall's lemma concludes the proof.
\end{proof}

We prove now that $\Phi:Y_{R_0}(0,T;\dom)\to Y_{R_0}(0,T;\dom)$, 
$\widetilde{X}\mapsto X$, is a contraction.
By Lemmas \ref{lem.diffX} and \ref{lem.stabW214}, we have
\begin{align*}
  \E|\Phi(X(t))-\Phi(X'(t))|^4 &\le Ct\int_0^t\E\|c(s)-c'(s)\|_{C^1(\overline\dom)}^4
	\dd s \\
	&\le Ct\E\big(\|\alpha-\alpha'\|^4_{L^4(Q_T)} + \|\beta-\beta'\|^4_{L^4(Q_T)}\big) \\
	&\le Ct\E\|X-X'\|_{L^4(0,t,L^4(\dom))}^4 = Ct\int_0^t\E|X(s)-X'(s)|^4\dd s.
\end{align*}
We iterate this inequality to find after $n$ times that
\begin{align*}
  \E|\Phi^n(X(t))-\Phi^n(X'(t))|^4 &\le (Ct)^n\int_0^t\int_0^{s_1}
	\cdots\int_0^{s_{n-1}}\E|X(s_n)-X'(s_n)|^4\dd s_n\cdots\dd s_1 \\
	&\le (Ct)^n\frac{t^n}{n!}\sup_{0<s<t}\E|X(s)-X'(s)|^4.
\end{align*}
We conclude that
$$
  \sup_{0<s<T}\big(\E|\Phi^n(X(t))-\Phi^n(X'(t))|^4\big)^{1/4}
	\le \frac{(CT^2)^{n/4}}{(n!)^{1/4}}\sup_{0<s<T}\big(\E|X(s)-X'(s)|^4\big)^{1/4}.
$$
The sequence $(CT^2)^{n/4}/(n!)^{1/4}$ converges to zero as $n\to\infty$.
Hence, there exists $n\in\N$ such that $\Phi^n$ is a contraction.
By the variant \cite[Theorem 2.4]{Lat14} of Banach's fixed-point theorem,
$\Phi$ has a fixed point, proving Theorem \ref{thm.ex}.

%%%%%%%%%%%%%%%%%%%%%%%%%%%%%%%%%%%%%%%%%%%%%%%%%%%%%%%%%%%%%%%%%%%%%%%%%%%%%%

\section{Numerical experiments}\label{sec.num}

We illustrate the dynamics of the tip and stalk cells in the two-dimensional
ball $\dom=B_R(0)$ around the origin with radius $R=500$ (in units of $\mu$m).
Let $h=10$ be the space step size and introduce the grid points
$x_{ij}=((k-i)h,(k-j)h)\in\R^2$, where $i,j=0,\ldots,2k$ and $k=R/h$.
The time step size equals $\tau=1$ (in units of seconds). 

The stochastic differential equations \eqref{1.X} are discretized by using
the Euler--Maruyama scheme. The nonlinearity $g_i[c,f]$ is chosen as in
\eqref{1.g} with $M$, $\gamma$, and $\lambda$ given in Appendix \ref{app.model}.
Furthermore, $\alpha_{0}$ and $z$ are taken as in 
\cite[formulas (10) and (14)]{VeGe12}.
Compared to \cite{BMGV16}, we neglect the contribution of the Hertz contact 
mechanics regarding $z$ to guarantee the boundary condition
 $g_i[c,f](\cdot,t)=0$ on
$\pa\dom$. We choose the continuous radially symmetric stochastic diffusion
$$
  \sigma(x) = \left\{\begin{array}{ll}
	0 &\quad\mbox{for }|x|\ge R, \\
	(1/R)\sqrt{(R/10)^2 - [R/10-(R-|x|)^2]}
	&\quad\mbox{for }9R/10<|x|<R, \\
	1/10 &\quad\mbox{for }|x|\le 9R/10.
	\end{array}\right.
$$

The solutions \eqref{1.fBF} to the ordinary differential equations \eqref{1.f} 
are written iteratively as
$$
  f_B(x,(n+1)\tau) = f_{B}(x,n\tau)\exp\bigg(-s_B\int_0^\tau c_M(x,s+n\tau)\dd s\bigg),
	\quad n\in\N,
$$
and similarly for $f_F$. The integral is approximated by the trapezoid rule
$$
  \int_0^\tau c_M(x,s+n\tau)\dd s \approx \frac{\tau}{2}(c_{M,ij}^n+c_{M,ij}^{n+1}),
$$
where $c_{M,ij}^n$ approximates $c_M(x_{ij},n\tau)$. We set
$f_{ij}^{n}:=(f_B,f_E,f_F)(x_{ij},n\tau)$. 

Finally, we discretize the reaction-diffusion equations \eqref{1.c} using
the forward Euler method and the central finite-difference scheme
$$
  \diver(D_V(f)\na c_V) \approx \frac{1}{h}\big(J_{i+1/2,j} 
  - J_{i-1/2,j} + J_{i,j+1/2} - J_{i,j-1/2}\big),
$$
where
\begin{align*}
  J_{i+1/2,j} &= \frac{1}{2h}(D_V(f_{i+1,j}^n) + D_V(f_{ij}^n))
  (c_{i+1,j}^{n+1}-c_{ij}^{n+1}), \\
  J_{i,j+1/2} &= \frac{1}{2h}(D_V(f_{i,j+1}^n) + D_V(f_{ij}^n))
  (c_{i,j+1}^{n+1}-c_{ij}^{n+1}).
\end{align*}
Notice that we obtain a semi-implicit scheme. 
The resulting linear system of equations is implemented
in the Python-based software environment {\em SciPy}
using sparse matrices and solved by using the {\tt spsolve} function
from the {\tt scipy.sparse.linalg} package.

The potentials $V_j^k$, used in \eqref{1.c}, are given by
$$
  V_j^k(x) = \frac{1}{IR_m^2}\exp\bigg(-\frac{R_m^2}{R_m^2-|x|^2}\bigg),
	\quad x\in\dom,\ j=D,M,U,V,
$$
where $R_m=12.5$, and $I>0$ is a normalization constant to ensure that
$\int_{\R^2}V_j^k(x)\dd x=1$.

It remains to define the initial conditions.  
The initial positions of the endothelial cells $X_i^{0,k}$ ($i=1,2$, $k=1,\ldots,N_i$)
are given by 
$$
  X_i^{0,k} = \begin{pmatrix} r\sin\phi \\ r\cos\phi \end{pmatrix},
$$
where $(r,\phi)$ is uniformly drawn from the set $[0.65R,0.75R]\times[0,\pi/2]$
and $R0.65=325$, $R_{c}=0.75R=375$. 
The initial volume fractions are
$$
  f_F^0(x) = \left\{\begin{array}{ll}
	0 &\quad\mbox{for }|x|\ge R_f, \\
	0.4(1 - \cos(\frac{\pi}{0.3R_f}(R_f-|x|))
	&\quad\mbox{for }0.7R_f<|x|<R_f, \\
	0.8 &\quad\mbox{for }|x|\le 0.7R_f,
	\end{array}\right.
$$
as well as $f_B^0=0.2f_F^0$ and $f_E^0=1-f_B^0-f_F^0$. We choose
the initial VEGF concentration
$$
  c_V^0(x) = 0.1\exp\bigg(-\frac{R_c}{\sqrt{R_c^2-|x|^2}}\bigg)
	\mathrm{1}_{B_{R_c}}(x),
$$
which is concentrated at the origin, and assume that the concentrations of the
remaining proteins vanish, $c_D^0=c_M^0=c_U^0=0$ in $\dom$, as they are
segregated by the tip cells.

We choose $N_1=2$ tip cells and $N_2=200$ stalk cells. 
Figure \ref{fig.cells} shows the positions of the tip and stalk cells at different
times. The tip cells segregate the DLL4 protein, and the stalk cells detect the
local increase of the DLL4 concentration, such that they follow the corresponding
tip cell. This effect is slightly more pronounced for the tip cell that starts in
an environment with a dense stalk cell population. The position of this tip cell
is closer to the origin than the other tip cell
with a higher VEGF concentration, 
leading to a relatively high production of DLL4 proteins.
The stalk cells, which do not follow a tip cell, are primarily
influenced by the stiffness gradient $\na(f_B+f_F)$
and the strain energy density $M$, which incorporates contact mechanics, 
resulting to a spreading of these cells.

\begin{figure}[ht]
\includegraphics[width=0.32\textwidth]{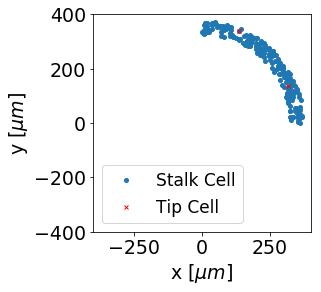}
\includegraphics[width=0.32\textwidth]{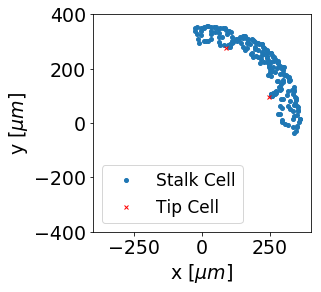}
\includegraphics[width=0.32\textwidth]{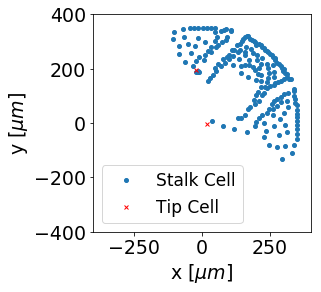}
\caption{Positions of two tip cells (red crosses) and 200 stalk cells (blue dots)
at times $T=0$\,s, $T=400$\,s, and $T=1600$\,s. }
\label{fig.cells}
\end{figure}

The protein concentrations are shown in Figure \ref{fig.conc}. As the diffusion
coefficient for VEGF is much larger than the reaction rate $s_V$, the concentration
of the VEGF protein becomes uniform in the large-time limit. The DLL4, MMP, and uPA
proteins are produced by the tip cells and hence follow their paths.
The corresponding concentrations increase with the availability of VEGF and decrease
due to consumption by the stalk cells or by getting exhausted from breaking down 
the fibrin matrix or the boundary membrane. Since the diffusion is slow,
the changes in the concentration are local up to time $T=1600$\,s.

\begin{figure}[ht]
\includegraphics[width=0.32\textwidth]{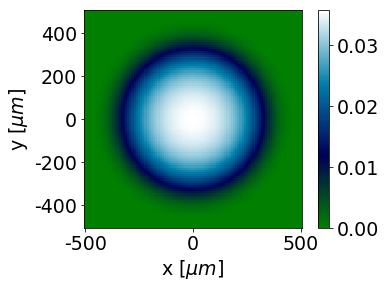} 
\includegraphics[width=0.32\textwidth]{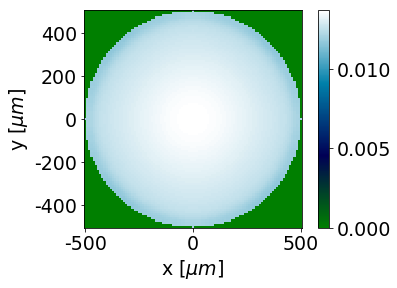}
\includegraphics[width=0.32\textwidth]{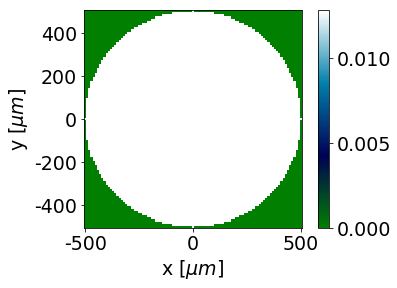}
\includegraphics[width=0.32\textwidth]{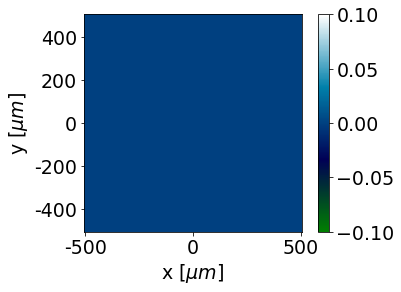} 
\includegraphics[width=0.32\textwidth]{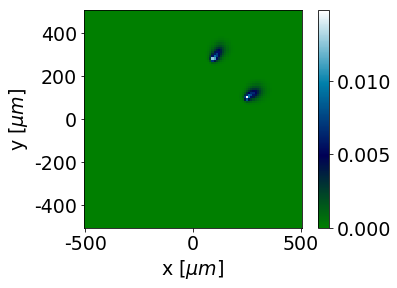}
\includegraphics[width=0.32\textwidth]{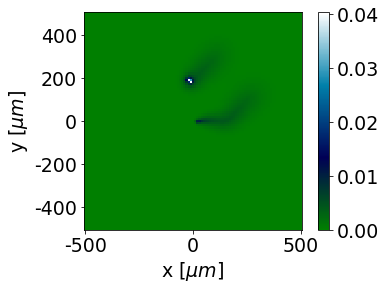}
\includegraphics[width=0.32\textwidth]{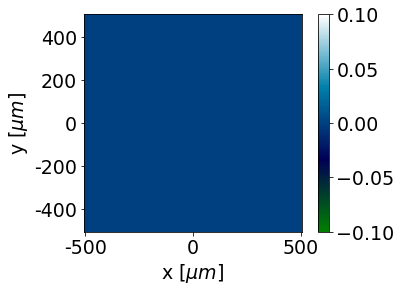} 
\includegraphics[width=0.32\textwidth]{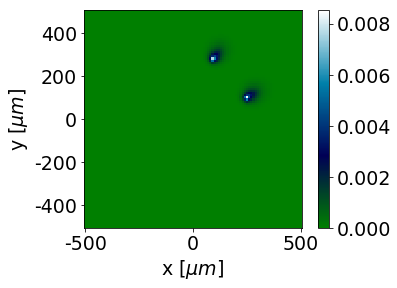}
\includegraphics[width=0.32\textwidth]{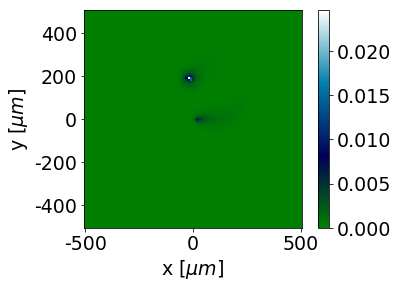}
\includegraphics[width=0.32\textwidth]{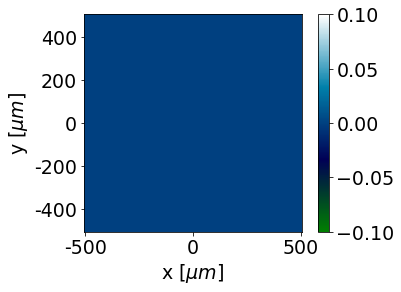} 
\includegraphics[width=0.32\textwidth]{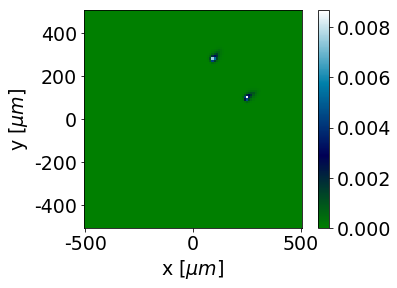}
\includegraphics[width=0.32\textwidth]{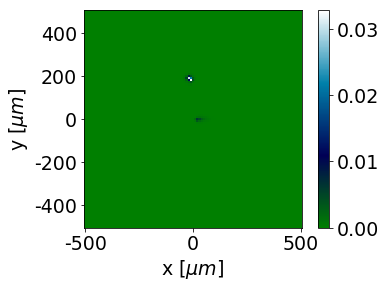}
\caption{Concentrations of the proteins VEGF (first row), DLL4 (second row),
MMP (third row), and uPA (last row)
at times $T=0$\,s (left column), $T=400$\,s (middle column), 
and $T=1600$\,s (right column).}
\label{fig.vf}
\end{figure}

We present the volume fractions of the basement membrane, fibrin matrix,
and extracellular fluid in Figure \ref{fig.vf}. 
The membrane and fibrin matrix are degraded by the
MMP and uPA proteins, thus increasing the volume fraction of the
extracellular fluid. As both proteins are
produced by the tip cells, the degradation follows their paths. 

\begin{figure}[ht]
\includegraphics[width=0.32\textwidth]{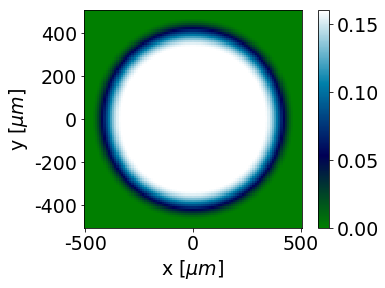} 
\includegraphics[width=0.32\textwidth]{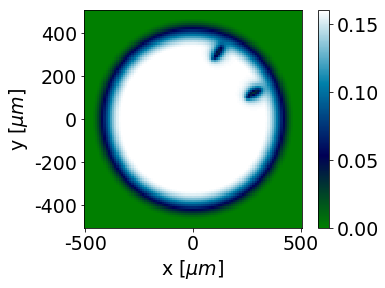}
\includegraphics[width=0.32\textwidth]{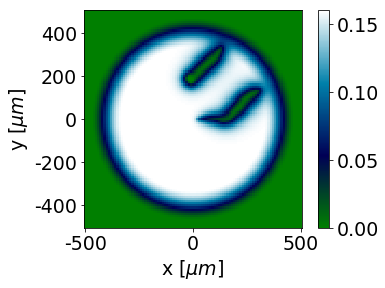}
\includegraphics[width=0.32\textwidth]{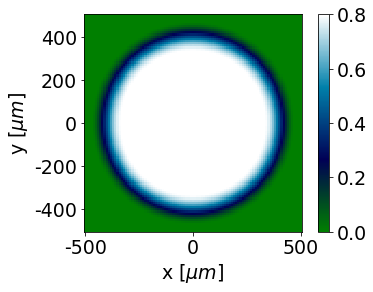} 
\includegraphics[width=0.32\textwidth]{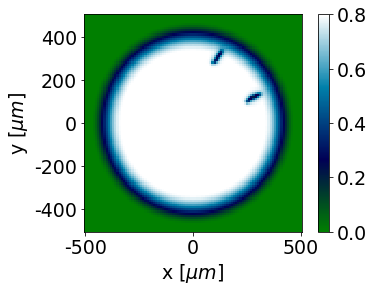}
\includegraphics[width=0.32\textwidth]{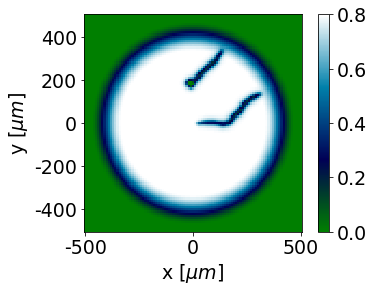}
\includegraphics[width=0.32\textwidth]{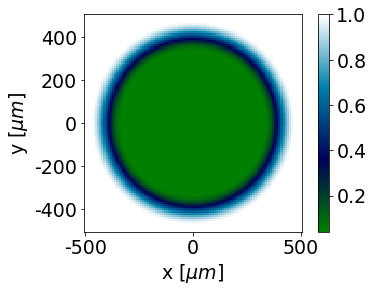} 
\includegraphics[width=0.32\textwidth]{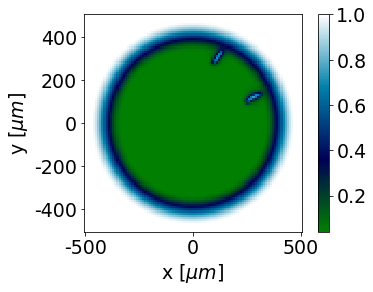}
\includegraphics[width=0.32\textwidth]{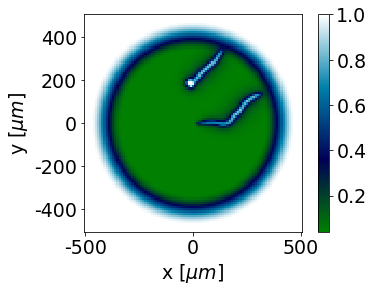}
\caption{Volume fractions of the basement membrane (first row), 
fibrin matrix (second row), and extracellular fluid (last row)
at times $T=0$\,s (left column), $T=400$\,s (middle column), 
and $T=1600$\,s (right column).}
\label{fig.conc}
\end{figure}

Summarizing, we see that the model successfully describes the formation of
premature sprouts. The experiments from \cite{BMGV16} for
dermal endothelial cells show that the in vitro angiogenesis sprouting 
qualitatively well agrees with the numerical tests. Clearly, the proposed
system of equations models only a very small number of biological processes,
chemical reactions, and signal proteins, and more realistic results can be
only expected after taking into account more biological modeling details.
Still, the onset of vessel formation is well illustrated by our simple model.
 
%%%%%%%%%%%%%%%%%%%%%%%%%%%%%%%%%%%%%%%%%%%%%%%%%%%%%%%%%%%%%%%%%%%%%%%%%%%%%%

\begin{appendix}

\section{Regularity results for elliptic and parabolic equations}\label{sec.app}

Let $\dom\subset\R^m$ ($m\ge 1$) be a bounded domain.

\begin{theorem}[\cite{Mik76}, Section IV.2, Theorem 4]\label{thm.ellip}
Let $\pa\dom\in C^{k+2}$ and $f\in H^k(\dom)$ for $k\ge 0$. 
Let $u\in H^1(\dom)$ be a weak solution to
$$
  \Delta u = f\quad\mbox{in }\dom, \quad \na u\cdot\nu=0\quad\mbox{on }\pa\dom.
$$
Then $u\in H^{k+2}(\dom)$, and there exists a constant
$C>0$ not depending on $u$ or $f$ such that
$$
  \|u\|_{H^{k+2}(\dom)} \le C\big(\|f\|_{H^k(\dom)} + \|u\|_{L^2(\dom)}\big).
$$
\end{theorem}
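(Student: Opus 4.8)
The plan is to prove the estimate by induction on $k$, using the method of difference quotients due to Nirenberg combined with a standard flattening of the boundary. Throughout I write $D_\ell^h v(x) = \big(v(x+he_\ell)-v(x)\big)/h$ for the difference quotient in direction $e_\ell$, and I exploit that the problem is the Neumann problem for the (variable-coefficient, uniformly elliptic) operator obtained after straightening $\pa\dom$.

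\emph{Base case $k=0$.} I would first establish the interior bound. Fixing a ball $B\subset\subset\dom$ and a cutoff $\zeta\in C_c^\infty(\dom)$ with $\zeta\equiv 1$ on $B$, I test the weak formulation $\int_\dom\na u\cdot\na\phi\,\dd x=-\int_\dom f\phi\,\dd x$ with $\phi=-D_\ell^{-h}(\zeta^2 D_\ell^h u)$. Using the discrete product rule, summation by parts for difference quotients, and the commutation of $D_\ell^h$ with $\na$, this yields a bound $\|\zeta D_\ell^h\na u\|_{L^2(\dom)}\le C\big(\|f\|_{L^2(\dom)}+\|u\|_{H^1(\dom)}\big)$ uniform in $h$; letting $h\to 0$ gives $u\in H^2_{\mathrm{loc}}(\dom)$ with the corresponding interior estimate. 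For the boundary, near each boundary point I straighten $\pa\dom$ by a $C^2$ diffeomorphism, which turns $\Delta$ into a divergence-form uniformly elliptic operator with Lipschitz coefficients and turns the no-flux condition $\na u\cdot\nu=0$ into a conormal condition on a flat piece of boundary. Taking $D_\ell^h$ only in directions $\ell$ tangential to the flattened boundary keeps the test function $-D_\ell^{-h}(\zeta^2 D_\ell^h u)$ admissible, so the same computation controls all tangential second derivatives; the remaining pure normal derivative $\pa_{nn}u$ is then recovered algebraically from the equation, $a_{nn}\pa_{nn}u=f-(\text{already-controlled terms})$, using uniform ellipticity $a_{nn}\ge\lambda>0$. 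A finite covering and partition of unity glue the local estimates, giving $u\in H^2(\dom)$ with $\|u\|_{H^2(\dom)}\le C\big(\|f\|_{L^2(\dom)}+\|u\|_{L^2(\dom)}\big)$, the $H^1$ contribution being absorbed via the equation and a Poincar\'e-type inequality.

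\emph{Inductive step.} Assume the statement holds for $k-1$, and let $\pa\dom\in C^{k+2}$, $f\in H^k(\dom)$; then $u\in H^{k+1}(\dom)$ by the hypothesis. I would differentiate the (flattened) equation and the conormal boundary condition: each first derivative $\pa_m u$ is again a weak solution of a Neumann-type problem of the same structure, where differentiating the conormal condition produces boundary and commutator terms whose coefficients involve the geometry of $\pa\dom$ and are controlled by its $C^{k+2}$ norm together with lower-order derivatives of $u$ that are already estimated. Applying the $k-1$ result to $\pa_m u$ gives $\pa_m u\in H^{k+1}(\dom)$ for every $m$, hence $u\in H^{k+2}(\dom)$, and chaining the inequalities while absorbing lower-order norms by interpolation yields the stated estimate.

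The main obstacle is the boundary analysis. The delicate points are verifying that the difference-quotient test functions stay admissible after flattening, so that no uncontrolled boundary terms enter the conormal (Neumann) condition, and, in the inductive step, correctly tracking the commutators and boundary contributions generated by differentiating that conormal condition, whose coefficients encode the curvature of $\pa\dom$ and therefore genuinely require the full $C^{k+2}$ regularity assumed on the boundary.
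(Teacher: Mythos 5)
This statement is quoted in the paper's appendix as an external result (Mikhailov, Section IV.2, Theorem 4) and the paper gives no proof of it, so there is nothing internal to compare against. Your sketch is the standard and correct argument for exactly this theorem: interior difference quotients, boundary flattening to a conormal problem, tangential difference quotients plus algebraic recovery of $\pa_{nn}u$ from uniform ellipticity, a partition of unity, and induction on $k$ by differentiating the flattened equation. One minor point: to reduce the $\|u\|_{H^1(\dom)}$ term on the right-hand side to $\|u\|_{L^2(\dom)}$ you should test the weak formulation with $u$ itself, giving $\|\na u\|_{L^2}^2\le\|f\|_{L^2}\|u\|_{L^2}$; a Poincar\'e inequality is not directly applicable here since $u$ need not have zero mean for the Neumann problem. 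Otherwise the outline is sound.
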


The following regularity results hold for the parabolic problem
\begin{equation}\label{a.para}
\begin{aligned}
  & \pa_t u - \diver(a(x,t)\na u) = f\ \mbox{in }\dom,\ t>0, \\
	& a(x,t)\na u\cdot\nu=0\ \mbox{on }\pa\dom, \quad u(0)=u^0\ \mbox{in }\dom.
	\end{aligned}
\end{equation}

\begin{theorem}[\cite{Tem97}, Section II.3, Theorem 3.3]\label{thm.pat}
Let $a\in L^\infty(Q_T)$ be such that $a(x,t)\ge a_0>0$ for all 
$(x,t)\in\overline{\dom}\times[0,T]$, $f\in L^2(Q_T)$, and $u^0\in H^1(\dom)$.
Then there exists a unique weak solution to \eqref{a.para} such that
$u\in C^0([0,T];H^1(\dom))$, $\pa_t u\in L^2(Q_T)$, and there exists a constant
$C>0$, not depending on $a$, $u$, $u^0$, or $f$, such that
$$
  \|u\|_{L^\infty(0,T;H^1(\dom))} + \|\pa_t u\|_{L^2(Q_T)}
	\le C\big(\|f\|_{L^2(Q_T)} + \|u^0\|_{H^1(\dom)}\big).
$$
\end{theorem}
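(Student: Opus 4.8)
The plan is to construct the solution by a Galerkin scheme, extract the two a priori estimates matching the two norms on the left-hand side, and pass to the limit. First I would take the orthonormal basis $(e_\ell)$ of $L^2(\dom)$ consisting of Neumann eigenfunctions of the Laplacian (the basis already used for the Galerkin scheme in Section \ref{sec.c}), which is simultaneously orthogonal in $H^1(\dom)$ and smooth, and seek $u_N(t)=\sum_{\ell=1}^N d_\ell(t)e_\ell$ solving $(\pa_t u_N,e_\ell)_{L^2}+(a\na u_N,\na e_\ell)_{L^2}=(f,e_\ell)_{L^2}$ with $u_N(0)$ the $H^1$-projection of $u^0$. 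Since $a\in L^\infty(Q_T)$, this is a linear ODE system with $L^\infty$ matrix and $L^2$ forcing, so a unique absolutely continuous solution on $[0,T]$ exists by Carath\'eodory theory.

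The heart of the matter is two energy estimates. Testing with $u_N$ and using coercivity $a\ge a_0$ together with Young's inequality on $(f,u_N)_{L^2}$ gives, after Gronwall, the bound $\|u_N\|_{L^\infty(0,T;L^2)}+\|u_N\|_{L^2(0,T;H^1)}\le C(\|f\|_{L^2(Q_T)}+\|u^0\|_{L^2})$. For the stronger norms I would test with $\pa_t u_N$, obtaining $\|\pa_t u_N\|_{L^2(\dom)}^2+\int_\dom a\na u_N\cdot\na\pa_t u_N\,\dd x=(f,\pa_t u_N)_{L^2}$. The diffusion term equals $\tfrac12\frac{\dd}{\dd t}\int_\dom a|\na u_N|^2\,\dd x$ precisely when $a$ is independent of $t$ (the setting of the cited theorem); integrating in time, absorbing $(f,\pa_t u_N)$ by Young, and comparing $\int_\dom a|\na u_N|^2$ with $\|\na u_N\|_{L^2}^2$ via the bounds on $a$ then yields $\|u_N\|_{L^\infty(0,T;H^1)}+\|\pa_t u_N\|_{L^2(Q_T)}\le C(\|f\|_{L^2(Q_T)}+\|u^0\|_{H^1})$, with $C$ depending only on $a_0$, $\|a\|_{L^\infty}$, and $T$.

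With these uniform bounds, weak-$\ast$ compactness in $L^\infty(0,T;H^1)$ and weak compactness of $\pa_t u_N$ in $L^2(Q_T)$ produce a limit $u$, while the Aubin--Lions lemma gives strong $L^2(Q_T)$ convergence, which is enough to pass to the limit in the (linear) weak formulation and identify $u$ as a weak solution; the a priori estimate is inherited by weak lower semicontinuity. The regularity $u\in L^\infty(0,T;H^1)$ with $\pa_t u\in L^2(Q_T)$ yields, by the usual argument combining the Lions--Magenes embedding with the energy identity, both $u\in C^0([0,T];H^1)$ and attainment of the initial datum. Uniqueness is immediate from linearity: the difference of two solutions solves the homogeneous problem, and the first energy estimate forces it to vanish.

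The main obstacle is the diffusion term $\int_\dom a\na u_N\cdot\na\pa_t u_N\,\dd x$ in the second estimate when $a$ genuinely depends on $t$: rewriting it as $\tfrac12\frac{\dd}{\dd t}\int_\dom a|\na u_N|^2$ leaves the remainder $\tfrac12\int_\dom\pa_t a\,|\na u_N|^2$, which cannot be controlled from $a\in L^\infty(Q_T)$ alone, and indeed the maximal-regularity conclusion $\pa_t u\in L^2(Q_T)$ can fail for merely measurable-in-time coefficients. Some time-regularity of $a$ is therefore genuinely needed; in every application in this paper $a=D_V(f)$ is H\"older in time (Lemma \ref{lem.1+delta}), so the remainder is controlled and this step causes no difficulty.
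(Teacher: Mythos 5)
Your Galerkin scheme, the two energy estimates (testing with $u_N$ and then with $\pa_t u_N$), and the compactness passage to the limit are precisely the mechanism behind the cited result; the paper offers no proof of Theorem \ref{thm.pat} beyond deferring to \cite{Tem97}, so there is no competing argument to compare against, and your reconstruction is correct in every setting where the second estimate closes. The genuinely valuable part of your write-up is the diagnosis of the obstruction: for $a\in L^\infty(Q_T)$ with arbitrary measurable dependence on $t$, the identity $\int_\dom a\na u_N\cdot\na\pa_t u_N\,\dd x=\tfrac12\tfrac{\dd}{\dd t}\int_\dom a|\na u_N|^2\,\dd x-\tfrac12\int_\dom\pa_t a\,|\na u_N|^2\,\dd x$ is unavailable, and the conclusion $\pa_t u\in L^2(Q_T)$, $u\in L^\infty(0,T;H^1(\dom))$ is indeed false in that generality (this is Lions' maximal-regularity problem for non-autonomous forms, for which counterexamples with rough time dependence exist). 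The statement of Theorem \ref{thm.pat} as printed is therefore stronger than its hypotheses support and needs an added time-regularity assumption on $a$.

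One refinement to your last paragraph: H\"older continuity of $a$ in time (Lemma \ref{lem.1+delta}) does \emph{not} by itself control the remainder --- if $a$ is merely $C^{(1+\delta)/2}$ in $t$, the term $\int_\dom\pa_t a\,|\na u_N|^2\,\dd x$ is not even defined, and rescuing maximal regularity from H\"older-in-time coefficients requires machinery well beyond this energy argument. What actually saves every application in the paper (Lemmas \ref{lem.hoelder} and \ref{lem.stab2}) is stronger: there $a=D_j(f)$ with $f$ given by the explicit formula \eqref{1.fBF}, so $\pa_t f=-s\,c\,f\in L^\infty(Q_T)$ and hence $\pa_t a\in L^\infty(Q_T)$; the remainder is then bounded by $\|\pa_t a\|_{L^\infty(Q_T)}\|\na u_N\|^2_{L^2(\dom)}$ and absorbed via Gronwall together with your first estimate. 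With this hypothesis the constant $C$ additionally depends on $\|\pa_t a\|_{L^\infty(Q_T)}$ (contrary to the assertion that $C$ is independent of $a$), which is harmless here since that norm is controlled by the data $c^0$, $f^0$, but should be recorded. The clean fix is to state the theorem either for $a$ independent of $t$, as in \cite{Tem97}, or under the hypothesis $\pa_t a\in L^\infty(Q_T)$.
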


\begin{proof}
The a priori estimate is a consequence of the proof of \cite[Theorem 3.3]{Tem97}.
\end{proof}

Let $\alpha$, $\beta\in(0,1]$.
The space $C^{\alpha,\beta}(\overline{\dom}\times[0,T])$ consists of
all functions $u:\overline{\dom}\times[0,T]\to\R$ such that there exists $C>0$
such that for all $(x,t)$, $y,s)\in\overline{\dom}\times[0,T]$,
$$
  |u(x,t)-u(y,s)|\le C(|x-y|^\alpha + |s-t|^\beta)\quad\mbox{for all }
	(x,t),\,(y,s)\in\overline{\dom}\times[0,T].
$$
The space $C^{k+\beta}(\overline{\dom})$ is the space of all
functions $u\in C^k(\overline\dom)$ such that $\mathrm{D}^ku$ is H\"older continuous 
with index $\beta>0$. 

\begin{theorem}[\cite{Lie87}, Theorem 1.2]\label{thm.hoelder}
Let $\beta\in(0,1)$, $\pa\dom\in C^{1+\beta}$,
$a\in C^{\beta,\beta/2}(\overline{\dom}\times[0,T])$ be such that
$a(x,t)\ge a_0>0$ for all $(x,t)\in\overline{\dom}\times[0,T]$, 
$f\in L^\infty(0,T;L^\infty(\dom))$, and
$u^0\in C^{1+\beta}(\overline\dom)$ be such that $a(x,t)\na u_0\cdot\nu = 0$
on $\pa\dom$. Furthermore, let $u\in C^0([0,T];L^2(\dom))\cap L^2(0,T;H^1(\dom))$
be a weak solution to \eqref{a.para}.
Then there exists a constant $C_\beta>0$, only depending on the data, such that
$$
  \|u\|_{C^{1+\beta,(1+\beta)/2}(\overline{\dom}\times[0,T])}\le C_\beta.
$$
\end{theorem}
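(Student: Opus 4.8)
The plan is to prove the gradient H\"older bound by a Campanato--Schauder perturbation argument: localize the equation on small parabolic cylinders and compare $u$ with the solution of the frozen constant-coefficient heat equation. Write $z=(x,t)$, let $Q_r(z_0)$ be the parabolic cylinder of radius $r$ about $z_0$, and recall the Campanato characterization that a function $g$ belongs to $C^{\gamma,\gamma/2}$ exactly when $r^{-(m+2+2\gamma)}\int_{Q_r(z_0)}|g-(g)_{Q_r}|^2\,\dd z$ is bounded uniformly in $z_0$ and $r$; I would apply this with $g=\na u$ and $\gamma=\beta$. As preliminaries, since $a$ is bounded and uniformly elliptic and $f\in L^\infty(Q_T)$, the De Giorgi--Nash--Moser theory yields $u\in C^{\alpha,\alpha/2}(\overline\dom\times[0,T])$ for some $\alpha\in(0,1)$, and the Caccioppoli inequality then controls the Dirichlet-energy growth $\int_{Q_r}|\na u|^2\,\dd z$; the H\"older (hence uniform) continuity of $a$ will supply the smallness of the coefficient oscillation on small cylinders that the iteration requires.

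For the interior estimate I would fix $z_0$ with $Q_{2r}(z_0)\subset Q_T$, freeze $a_0:=a(z_0)$, and let $v$ solve $\pa_t v-a_0\Delta v=0$ in $Q_r(z_0)$ with $v=u$ on the parabolic boundary. The difference $w:=u-v$ then solves $\pa_t w-a_0\Delta w=\diver((a-a_0)\na u)+f$ with vanishing parabolic boundary data, and an energy estimate together with the parabolic Poincar\'e inequality and $|a-a_0|\le[a]_{C^{\beta,\beta/2}}(2r)^\beta$ on $Q_r$ gives
\begin{equation*}
  \int_{Q_r}|\na w|^2\,\dd z
	\le C[a]_{C^{\beta,\beta/2}}^2\,r^{2\beta}\int_{Q_r}|\na u|^2\,\dd z
	+ C\|f\|_{L^\infty(Q_T)}^2\,r^{m+4}.
\end{equation*}
Since each component of $\na v$ is again caloric, it is Lipschitz in the parabolic metric, whence $\int_{Q_\rho}|\na v-(\na v)_{Q_\rho}|^2\le C(\rho/r)^{m+4}\int_{Q_r}|\na v-(\na v)_{Q_r}|^2$ for $\rho\le r$. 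Writing $\na u=\na v+\na w$ and setting $\varphi(r):=\int_{Q_r}|\na u-(\na u)_{Q_r}|^2\,\dd z$, these two facts combine into
\begin{equation*}
  \varphi(\rho)\le C(\rho/r)^{m+4}\varphi(r)
	+ C[a]_{C^{\beta,\beta/2}}^2\,r^{2\beta}\int_{Q_r}|\na u|^2\,\dd z
	+ C\|f\|_{L^\infty(Q_T)}^2\,r^{m+4}.
\end{equation*}
Because $\beta<1$, the leading exponent $m+4$ strictly exceeds the target exponent $m+2+2\beta$, so the standard Campanato iteration lemma (in the version permitting a small multiplicative perturbation, whose smallness comes from the continuity of $a$) yields $\varphi(r)\le Cr^{m+2+2\beta}$, i.e.\ $\na u\in C^{\beta,\beta/2}$ in the interior.

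Near $\pa\dom$ I would flatten the boundary by a $C^{1+\beta}$ diffeomorphism, which is where the hypothesis $\pa\dom\in C^{1+\beta}$ is used; the conormal condition $a\na u\cdot\nu=0$ becomes a conormal condition $A\na u\cdot e_m=0$ on $\{x_m=0\}$ for a transformed, still H\"older-continuous and uniformly elliptic matrix $A$. The frozen comparison problem is then the constant-coefficient conormal problem with matrix $A_0=A(z_0)$, which a linear change of variables turns into the heat equation with the standard Neumann condition $\pa_{x_m}v=0$; even reflection across $\{x_m=0\}$ makes $v$ caloric on the full cylinder and restores the gradient-decay estimate, so the interior iteration carries over up to the lateral boundary. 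Behavior at $t=0$ is handled by passing to $u-u^0$, which has zero initial data and solves the same type of equation with the additional divergence-form source $\diver(a\na u^0)$, where $a\na u^0\in C^{\beta,\beta/2}$; the compatibility condition $a\na u^0\cdot\nu=0$ makes this source consistent with the conormal condition, so no corner singularity arises at $\pa\dom\times\{0\}$ and the same comparison argument applies (the H\"older divergence-source only adds a harmless $r^{2\beta}$ term). Covering the compact set $\overline\dom\times[0,T]$ by finitely many interior, lateral-boundary and initial cylinders and summing the local Campanato bounds then gives the global estimate $\|u\|_{C^{1+\beta,(1+\beta)/2}(\overline\dom\times[0,T])}\le C_\beta$.

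The hardest part will be the lateral-boundary analysis, i.e.\ propagating the Campanato decay up to $\pa\dom$ while respecting the conormal boundary condition. The flattening turns the scalar coefficient $a$ into a full matrix $A$ and the Neumann condition into an oblique conormal condition, so the comparison must use the constant-coefficient conormal half-space problem, and one must verify carefully that the diagonalizing change of variables plus even reflection indeed yield the sharp gradient decay for that problem. The compatibility condition $a\na u^0\cdot\nu=0$ is indispensable here: without it the conormal derivative would be discontinuous at the parabolic corner $\pa\dom\times\{0\}$ and $\na u$ could fail to be continuous there, so it must be invoked precisely to close the estimate uniformly up to the corner.
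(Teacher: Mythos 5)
The paper does not actually prove this statement: Theorem \ref{thm.hoelder} is imported verbatim from Lieberman \cite{Lie87} (Theorem 1.2) and used as a black box in Lemma \ref{lem.1+delta}, so there is no in-paper proof to compare against. Your sketch is the standard Campanato--Schauder perturbation proof of such gradient H\"older estimates (freezing of coefficients, comparison with the constant-coefficient conormal half-space problem, boundary flattening plus even reflection, subtraction of $u^0$ at $t=0$), which is in substance the route taken in the cited literature; the overall architecture is sound, including the role of the compatibility condition $a\na u^0\cdot\nu=0$ at the parabolic corner and the observation that $f\in L^\infty$ only contributes a term of order $r^{m+4}$, which is harmless because $\beta<1$.

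One step does not close as written. In your decay inequality the perturbation term is $C[a]_{C^{\beta,\beta/2}}^2\,r^{2\beta}\int_{Q_r}|\na u|^2\,\dd z$, which is neither a small multiple of $\varphi(r)$ nor bounded by $Cr^{m+2+2\beta}$ until you already know the Morrey-type bound $\int_{Q_r}|\na u|^2\,\dd z\le Cr^{m+2}$; the ``iteration lemma with a small multiplicative perturbation'' therefore does not apply directly to the displayed inequality. The Caccioppoli inequality combined with the De Giorgi--Nash--Moser estimate $u\in C^{\alpha,\alpha/2}$ only yields $\int_{Q_r}|\na u|^2\le Cr^{m+2\alpha}$ for some uncontrolled $\alpha$, so an intermediate bootstrap is required: run the same comparison argument first on $\psi(r)=\int_{Q_r}|\na u|^2$ to raise the Morrey exponent in finitely many steps up to $m+2$, and only then iterate on the oscillation $\varphi$. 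This is standard Schauder-by-Campanato machinery, but it is the actual content of the hard part of the interior estimate and your sketch elides it. A second, smaller point: after even reflection of the frozen half-space solution, the normal derivative of $v$ is odd, so the decay of the mean oscillation of $\na v$ at lateral boundary points must be proved componentwise, comparing $\pa_{x_m}v$ with $0$ rather than with its cylinder average; otherwise the comparison vector may acquire a nonzero normal component and the boundary iteration degrades.
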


\begin{theorem}[\cite{LSU68}, Section IV.9, Theorem 9.1]\label{thm.strong}
Let $\pa\dom\in C^2$, $q>3$, $T>0$, $a\in C^0(\overline{\dom}\times[0,T])$ be such that
$a(x,t)\ge a_0>0$ for all $(x,t)\in\overline{\dom}\times[0,T]$,
$f\in L^q(0,T;L^q(\dom))$, $u^0\in W^{2,q}(\dom)$ be such that
$a(x,t)\na u^0\cdot\nu = 0$ on $\pa\dom$. Then there exists a unique strong solution
$u\in L^q(0,T;W^{2,q}(\dom))$ to \eqref{a.para}
satisfying $\pa_t u\in L^q(0,T;L^q(\dom))$,
and there exists a constant $C>0$, not depending on $u$, $f$, or $u_0$, such that
$$
  \|u\|_{L^q(0,T;W^{2,q}(\dom))} + \|\pa_t u\|_{L^q(0,T;L^q(\dom))}
	\le C\big(\|f\|_{L^q(0,T;L^q(\dom))} + \|u_0\|_{W^{2,q}(\dom)}\big).
$$
\end{theorem}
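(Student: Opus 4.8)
The plan is to prove this as the standard $L^q$ maximal-regularity (strong solvability) theorem for the conormal problem \eqref{a.para}, via freezing of coefficients, localization, and the method of continuity. The a priori estimate is the heart of the matter: once it is available with a constant depending only on the structural data, existence follows by a homotopy to the heat equation and uniqueness is immediate from the estimate applied to $f=0$, $u^0=0$. As a first reduction I would pass to homogeneous initial data by subtracting from $u$ a lift $U$ of $u^0$ lying in the solution class and respecting the compatibility condition $a\na u^0\cdot\nu=0$; this is possible because $W^{2,q}(\dom)$ embeds into the trace space of the class $\{u\in L^q(0,T;W^{2,q}(\dom)):\pa_t u\in L^q(Q_T)\}$ (for finite $q$), and it merely replaces $u^0$ by a modified source in $L^q(Q_T)$. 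Thus it suffices to treat $u^0=0$ with $f\in L^q(Q_T)$.

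For the zero-initial-data estimate I would first settle the constant-coefficient model problems. On $\R^m$ the equation $\pa_t u-a_0\Delta u=f$ is solved explicitly by the rescaled heat kernel, and the maps $f\mapsto\pa_t u$ and $f\mapsto D^2 u$ are singular integral operators of parabolic Calder\'on--Zygmund type; their $L^q$ boundedness for $1<q<\infty$ follows either from the Mikhlin--H\"ormander multiplier theorem in the parabolic scaling or from the analytic-semigroup and $H^\infty$-calculus characterization of maximal regularity. The half-space problem with the Neumann condition is reduced to the whole-space case by even reflection, which preserves the equation and the homogeneous boundary data and therefore inherits the same bound
$$
  \|\pa_t u\|_{L^q}+\|D^2 u\|_{L^q}\le C\|f\|_{L^q}.
$$

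Next I would globalize by freezing coefficients. Since $a$ is continuous on the compact set $\overline\dom\times[0,T]$, it is uniformly continuous, so for any tolerance $\eta>0$ there is a scale $\rho(\eta)$ such that the oscillation of $a$ over any space-time cylinder of radius $\rho$ is below $\eta$. Using a finite partition of unity $\{\zeta_m\}$ subordinate to such a cover, each localized function $\zeta_m u$ solves a constant-coefficient model equation with frozen value $a(x_m,t_m)$, up to the error $(a-a(x_m,t_m))D^2(\zeta_m u)$ and commutator terms involving $\na\zeta_m$. The oscillation error carries the small factor $\eta$ and is absorbed into the left-hand side, while the lower-order commutators are estimated by $\|u\|_{L^q}+\|\na u\|_{L^q}$ and controlled through an interpolation (Ehrling-type) inequality. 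For the pieces meeting $\pa\dom$ I would use the $C^2$ regularity of the boundary to flatten it by a local diffeomorphism, turning the conormal condition into the Neumann condition of the half-space model. Summing over $m$ yields the global a priori bound with a constant depending only on the ellipticity $a_0$, the modulus of continuity of $a$, $q$, $T$, and $\dom$.

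The step I expect to be the main obstacle is precisely this variable-coefficient, near-boundary perturbation. Because $a$ is merely continuous rather than H\"older, Schauder theory is unavailable and one genuinely needs the $L^q$ Calder\'on--Zygmund machinery together with an absorption argument whose localization scale is dictated by the modulus of continuity of $a$; the interplay of the boundary flattening, which perturbs the principal part, with this absorption, and the careful treatment of the commutators, is where the bookkeeping is most delicate. With the uniform a priori estimate secured, I would close the argument by the method of continuity along the family $\pa_t-\diver(a_\theta\na\,\cdot\,)$ with $a_\theta=(1-\theta)a_0+\theta a$ and $\theta\in[0,1]$: solvability at $\theta=0$ is the model problem, the estimate is uniform in $\theta$, and hence solvability persists to $\theta=1$, yielding the unique strong solution with the asserted bound.
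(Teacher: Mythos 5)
The paper does not prove Theorem \ref{thm.strong} at all: it is imported verbatim from Lady\v{z}enskaya--Solonnikov--Ural'ceva \cite{LSU68} (Section IV.9, Theorem 9.1) and used as a black box in Lemmas \ref{lem.W2infty} and \ref{lem.stabW214}. Your proposal is a reconstruction of the standard proof of that classical maximal $L^q$-regularity result, and the outline is essentially the one found in the literature: reduction to vanishing initial data via a lift (using that $W^{2,q}(\dom)$ embeds into the temporal trace space $B^{2-2/q}_{q,q}$ of the class $W^{2,1,q}$), parabolic Calder\'on--Zygmund/multiplier estimates for the constant-coefficient whole-space problem, even reflection for the half-space Neumann problem, freezing of coefficients with absorption governed by the modulus of continuity of $a$, boundary flattening using $\pa\dom\in C^2$, interpolation for the commutators, and the method of continuity plus uniqueness from the estimate. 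This is sound, and correctly identifies the variable-coefficient near-boundary perturbation as the delicate step. One point deserves a caveat: the equation \eqref{a.para} is written in divergence form, whereas the cited LSU theorem (and your argument, which freezes the coefficient in front of $D^2u$ and treats $(a-a(x_m,t_m))D^2(\zeta_m u)$ as the error) is a non-divergence-form result. With $a$ merely in $C^0(\overline\dom\times[0,T])$ the identity $\diver(a\na u)=a\Delta u+\na a\cdot\na u$ is not available, so the $W^{2,q}$ conclusion for the divergence-form operator requires additional regularity of $a$ in $x$; this gap is inherited from the paper's own statement rather than introduced by you, and it is harmless in the paper's applications, where the diffusivities $D_j(f)$ are shown to be $C^{1+\delta}$ in space before the theorem is invoked. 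Also note that your argument never uses the hypothesis $q>3$, which indeed is not needed for the principal-part estimate ($1<q<\infty$ suffices).
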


\begin{theorem}[\cite{LSU68}, Section V.5, Theorem 5.4]\label{thm.class}
Let $\beta\in(0,1)$, $\pa\dom\in C^{2+\beta}$, $T>0$,
$a_{ij}$, $b_i$, $c\in C^{\beta,\beta/2}(\overline{\dom}\times[0,T])$ be such that
$a_{ij}(x,t)\ge a_0>0$ for all $(x,t)\in\overline{\dom}\times[0,T]$ for 
$i,j=1,\ldots,m$, $f\in C^{\beta,\beta/2}(\overline{\dom}\times[0,T])$, and
$u^0\in C^{2+\beta}(\overline\dom)$ be such that $\na u_0\cdot\nu = 0$
on $\pa\dom$. Then there exists a unique classical solution 
$u\in C^{2+\beta,1+\beta}(\overline{\dom}\times[0,T])$ to
\begin{align*}
  & \pa_t u - \sum_{i,j=1}^m a_{ij}(x,t)\frac{\pa^2 u}{\pa x_i\pa x_j}
	+ b(x,t)\cdot\na u + c(x,t)u = f\quad\mbox{in }\dom,\ t>0, \\
  & \na u\cdot\nu = 0\quad\mbox{on }\pa\dom,\ t>0, \quad
	u(0)=u^0\quad\mbox{in }\dom,
\end{align*}
and there exists a constant $C>0$, not depending on $u$, $f$, or $u_0$, such that
$$
  \|u\|_{C^{2+\beta,1+\beta}(\overline{\dom}\times[0,T])}
	\le C\big(\|f\|_{C^{\beta;\beta/2}(\overline{\dom}\times[0,T])}
	+ \|u_0\|_{C^{2+\beta}(\overline\dom)}\big).
$$
\end{theorem}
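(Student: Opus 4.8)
The statement is a classical parabolic Schauder estimate, and the plan is to prove it in the standard two-part fashion: first establish the a priori bound
$$\|u\|_{C^{2+\beta,1+\beta}(\overline\dom\times[0,T])} \le C\big(\|f\|_{C^{\beta,\beta/2}(\overline\dom\times[0,T])} + \|u^0\|_{C^{2+\beta}(\overline\dom)}\big)$$
for every sufficiently smooth solution, and then upgrade this to existence by the method of continuity. Uniqueness is then immediate, since the difference of two solutions solves the homogeneous problem and the a priori estimate forces it to vanish. The uniform ellipticity $a_{ij}\ge a_0>0$ together with the $C^{\beta,\beta/2}$ regularity of the coefficients are precisely the hypotheses that make this Schauder machinery available.

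For the interior part of the a priori estimate I would freeze the leading coefficients at a point, $a_{ij}^0:=a_{ij}(x_0,t_0)$, and use the model operator $\pa_t - \sum_{i,j} a_{ij}^0\pa_{x_i}\pa_{x_j}$, whose fundamental solution is an explicit Gaussian after a linear change of variables diagonalizing $(a_{ij}^0)$. The decay of the space-time derivatives of this kernel yields the sharp $C^{2+\beta,1+\beta}$ bound in terms of a $C^{\beta,\beta/2}$ right-hand side. Passing to variable coefficients is then a freezing-and-perturbation argument: writing $L=\pa_t-\sum_{i,j}a_{ij}\pa_{x_i}\pa_{x_j}+b\cdot\na+c$ as the frozen operator plus the perturbation $\sum_{i,j}(a_{ij}^0-a_{ij})\pa_{x_i}\pa_{x_j}u$ plus lower-order terms, the H\"older continuity of the $a_{ij}$ makes the perturbation small on small parabolic cylinders, so after localizing with cutoff functions the error is absorbed into the left-hand side. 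The lower-order terms $b\cdot\na u$ and $cu$ are controlled by interpolation between the $C^{2+\beta}$, $C^{1+\beta}$, and $C^0$ norms of $u$.

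The boundary estimate is the delicate step. Using $\pa\dom\in C^{2+\beta}$, I would flatten the boundary locally by a $C^{2+\beta}$ diffeomorphism; the transformed coefficients stay $C^{\beta,\beta/2}$ and uniformly elliptic, while the Neumann condition $\na u\cdot\nu=0$ becomes an oblique-derivative condition on a flat face. Schauder estimates up to this flat boundary are obtained by the same freeze-and-perturb scheme applied to the half-space model problem, and are then patched with the interior estimates via a finite covering of $\overline\dom$ and a partition of unity. The compatibility condition $\na u^0\cdot\nu=0$ on $\pa\dom$ is essential here: it makes the boundary condition consistent with the initial datum at the corner $\pa\dom\times\{0\}$, which is exactly what lets the solution achieve the full $C^{2+\beta,1+\beta}$ regularity up to $t=0$ along the boundary. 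Interpolating to absorb the lower-order norms of $u$ on the right then produces the global bound.

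With the a priori estimate in hand, existence follows from the method of continuity: connecting $L$ to the heat operator $L_0=\pa_t-\Delta$ through $L_s=(1-s)L_0+sL$ for $s\in[0,1]$, each $L_s$ satisfies the hypotheses with uniform constants, so the estimate holds uniformly in $s$; since the Neumann heat problem $L_0u=f$ is solvable in $C^{2+\beta,1+\beta}$, solvability extends to $s=1$. I expect the genuine obstacle to be the boundary Schauder estimate --- reconciling the oblique-derivative condition on the curved boundary with the corner compatibility at $t=0$ --- whereas the interior theory, the interpolation bookkeeping, and the continuation argument are routine once the model-kernel estimates are established.
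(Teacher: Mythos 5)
This theorem is not proved in the paper at all: it is quoted as an external result from Lady\v{z}enskaya--Solonnikov--Ural'ceva \cite{LSU68} and used as a black box in Appendix \ref{sec.app}, so there is no in-paper argument to compare yours against. Your outline is the standard Schauder route and is essentially the architecture of the cited proof: a priori $C^{2+\beta,1+\beta/2}$ bounds for the constant-coefficient model problem, freezing of coefficients and absorption of the perturbation on small parabolic cylinders, boundary flattening by a $C^{2+\beta}$ diffeomorphism with the conormal condition becoming an oblique-derivative condition on a flat face, interpolation to control the lower-order terms, the method of continuity connecting to the Neumann heat operator for existence, and uniqueness from the homogeneous estimate. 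You also correctly identify the role of the compatibility condition $\na u^0\cdot\nu=0$ at the corner $\pa\dom\times\{0\}$, which is exactly what the hypothesis in the statement is for.

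That said, what you have is a roadmap, not a proof: the two steps that carry all the analytic weight --- the pointwise decay estimates for the derivatives of the model kernel that yield the sharp H\"older bound, and the half-space boundary estimate for the oblique-derivative problem together with the verification that first-order compatibility suffices for regularity up to $t=0$ --- are named but not executed. For reproducing a classical textbook theorem that the paper itself only cites, this is a reasonable level of detail, but be aware that those deferred steps are where the actual work of \cite{LSU68} lies. One cosmetic remark: the natural time regularity here is $C^{1+\beta/2}$ rather than $C^{1+\beta}$ (the paper's statement appears to carry a typo in the exponent), and the ellipticity hypothesis should properly read $\sum_{i,j}a_{ij}\xi_i\xi_j\ge a_0|\xi|^2$; your sketch implicitly uses the correct versions of both.
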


%%%%%%%%%%%%%%%%%%%%%

\section{Model parameters and constants}\label{app.model}

The model parameters and constants are taken from \cite{BMGV16}. For the
convenience of the reader, we collect here the expressions:
\begin{align*}
  \alpha_0 &= \frac{b_{i}R_{c}^3}{F_{i}\mu}, \\
  \gamma(x,t) &= \frac{0.1b_{i}F_{i}(1-f_{E}(x,t))}{\rho_{B}f_{B}(x,t)
  +\rho_{F}f_{F}(x,t)+\rho_{E}f_{E}(x,t)},\\
  \lambda(x,t) &= \frac{4^3 b_{i}F_{i}\widetilde{\lambda}}{30} (1-f_{E}(x,t))\bigg(\frac{1}{2}-f_{E}(x,t)\bigg)f_{E}(x,t), \\
  M_{i}^{k}(x,t) &= \sum_{j=1}^2\sum_{\ell=1}^{N_{j}}
  \frac{F_{i}^2}{20\pi^2 R_{c}^4}(1-f_{E}(x,t))
  \exp\bigg(\frac{-|X_{i}^{k}-X_{j}^{\ell}|}{R_{c}}\bigg) \\
  &\phantom{xx}{}-\frac{2\sqrt{2}}{\pi}\bigg(\frac{\max\lbrace 0,R_{c}-0.5|X_{i}^{k}-X_{j}^{\ell}|\rbrace}{R_{c}}\bigg)^{5/2}, \\
  v_{i}^{k} &= \sum_{j=1}^2\sum_{\ell=1}^{N_{j}}\frac{F_{i}^2}{20\pi^2 R_{c}^4 }(1-f_{E}(x,t))\exp\bigg(\frac{-|X_{i}^{k}-X_{j}^{\ell}|}{R_{c}}\bigg), \\
  z_{i}^{k} & = \frac{v_{i}^{k}}{|v_{i}^{k}|}.
\end{align*}

The parameters are chosen as in the following table; see \cite[Appendix]{BMGV16}.

\medskip
\begin{tabular}{|c|c|c||c|c|c||c|c|c|}
\hline
 & Value & Unit &  & Value & Unit &  & Value & Unit\\
\hline
\hline
$b_i$ & 0.02 & $\textrm{s}^{-1}$ & $D_{V}^{E}$ & 10 & $\mu \textrm{m}^2\textrm{s}^{-1}$ & $r_{D}$ & 10 & $\mu \textrm{m}^3 \textrm{s}^{-1}$\\ 
\hline
$F_{i}$ & 1000 & $\textrm{nN}$ & $D_{D}^{B}$ & 0.51 & $\mu \textrm{m}^2\textrm{s}^{-1}$& $r_{M}$ & 10 & $\mu \textrm{m}^3 \textrm{s}^{-1}$\\
\hline
$\mu$ & 0.2 & --&$D_{D}^{F}$ & 1.02 & $\mu \textrm{m}^2\textrm{s}^{-1}$ & $r_{U}$ & 10& $\mu \textrm{m}^3 \textrm{s}^{-1}$\\
\hline
$\widetilde{\lambda}$ & 15&--& $D_{D}^{E}$ & 0.051 &$\mu \textrm{m}^2\textrm{s}^{-1}$& $s_{V}$ & 0.024& $\mu \textrm{m}^3 \textrm{s}^{-1}$\\
\hline
$R_{c}$ & 11.25 & $\mu \textrm{m} $ & $D_{M}^{B}$ & 1.23 &$\mu \textrm{m}^2\textrm{s}^{-1}$&  $s_{D}$ & 0.024 & $\mu \textrm{m}^3 \textrm{s}^{-1}$\\
\hline
$\rho_{B}$ & $1.06\cdot 10^{-3}$ & $\textrm{ng}\mu \textrm{m}^{-3}$ & $D_{M}^{F}$ & 2.46 &$\mu \textrm{m}^2\textrm{s}^{-1}$ & $s_{M}$ & 0.024 & $\textrm{s}^{-1}$\\
\hline
$\rho_{F}$ & $1.06\cdot 10^{-3}$ & $\textrm{ng}\mu \textrm{m}^{-3}$& $D_{M}^{E}$ & 0.123 &$\mu \textrm{m}^2\textrm{s}^{-1}$& $s_{U}$& 0.024 & $\textrm{s}^{-1}$\\
\hline
$\rho_{E}$ & $0.9933\cdot 10^{-3}$ & $\textrm{ng}\mu \textrm{m}^{-3}$& $D_{U}^{B}$ & 0.53&$\mu \textrm{m}^2\textrm{s}^{-1}$&$s_{B}$ & 1.21 & $\mu \textrm{m}^3 \textrm{ng}^{-1} \textrm{s}^{-1}$\\
\hline
$D_{V}^{B}$ & 100 & $\mu \textrm{m}^2\textrm{s}^{-1}$ & $D_{U}^{F}$ & 1.06 &$\mu \textrm{m}^2\textrm{s}^{-1}$&$s_{F}$ & 1.21 & $\mu \textrm{m}^3 \textrm{ng}^{-1} \textrm{s}^{-1}$\\
\hline
$D_{V}^{F}$ & 200 & $\mu \textrm{m}^2\textrm{s}^{-1}$ &$D_{U}^{E}$ & 0.053&$\mu \textrm{m}^2\textrm{s}^{-1}$&&&\\
\hline
\end{tabular}

\end{appendix}

%%%%%%%%%%%%%%%%%%%%%%%%%%%%%%%%%%%%%%%%%%%%%%%%%%%%%%%%%%%%%%%%%%%%%%%%%%%%%%

\end{document}